\documentclass[11pt,a4paper,reqno]{amsart}

\usepackage{amsmath,amsfonts,amssymb,amsthm,color, comment, tabularx,mathabx}  
%mathabx includes widecheck
\usepackage{dsfont} %math blackboard boldface 1
\usepackage{esint} %cool integral
\usepackage{cancel} %slash the formula
\usepackage[utf8]{inputenc}
\usepackage{graphicx}
\usepackage{hyperref}
\hypersetup{
    colorlinks=true,
    linkcolor=blue,
    citecolor=blue,
    filecolor=magenta,      
    urlcolor=cyan,
}

\usepackage{ mathrsfs }

\usepackage[mathscr]{euscript}

\usepackage[
backend=biber,
style=alphabetic,
sorting=ynt
]{biblatex}
\addbibresource{ref.bib}

\setcounter{tocdepth}{3}
\setcounter{secnumdepth}{3}

%\usepackage{refcheck}

%Amelia
\linespread{1.2}

\newcommand{\Addresses}{{
\noindent  \textsc{Martin Hsu}, Purdue University, \texttt{hsu263@purdue.edu}\\
\textsc{Fred Lin,} University of Bonn, \texttt{fredlin@math.uni-bonn.de}\\
\textsc{Amelia Stokolosa,
}University of Wisconsin-Madison, \texttt{stokolosa@wisc.edu}

}}

\addtolength{\hoffset}{-1.25cm}
\addtolength{\textwidth}{2.5cm}
\addtolength{\voffset}{-1cm}
\addtolength{\textheight}{1.2cm}

%%%%%%%%%%%%%%%%%%%Theorem Stuff
\theoremstyle{plain}
\newtheorem{theorem}{Theorem}[section]

\newtheorem{lemma}[theorem]{Lemma}

\theoremstyle{definition}

\newtheorem{conjecture}[theorem]{Conjecture}
\newtheorem{example}[theorem]{Example}

\theoremstyle{remark}
\newtheorem{remark}[theorem]{Remark}

\newtheorem*{theorem*}{Theorem}

%%%%%%%%%%% In-line comments
\newcommand{\MH}[1]{
     % {\color{cyan}{[#1]}}
}

\definecolor{olive}{rgb}{0.42, 0.56, 0.14}
\newcommand{\AS}[1]{{\color{olive}{[#1]}}}
%%%%%%%%%%% 

%%%%%%%%%%% Martin
\newcommand{\br}[1]{\left( #1 \right)}
\newcommand{\Br}[1]{\left[ #1 \right]}
\newcommand{\BR}[1]{\left\{ #1 \right\}}
\newcommand{\ang}[1]{\left< #1 \right>}
\newcommand{\verts}[1]{\left\vert #1 \right\vert}
\newcommand{\Verts}[1]{\left\Vert #1 \right\Vert}

\newcommand{\1}{\mathds{1}}
\newcommand{\Tr}{\mathrm{Tr}}
\newcommand{\Mod}{\mathrm{Mod}}

%%%%%%%%%%%

%Amelia
\newtheorem*{claim}{Claim}

\numberwithin{equation}{section}

\def\R{\mathbb{R}}

%%%%%Amelia's commands

\definecolor{afb}{rgb}{0.36, 0.54, 0.66}

\DeclareMathOperator{\sgn}{sgn}
\DeclareMathOperator{\supp}{supp}

\DeclareMathOperator{\loc}{loc}
\DeclareMathOperator{\Dil}{Dil}

\newcommand{\mA}{\mathcal{A}}

\newcommand{\mE}{\mathcal{E}}

\newcommand{\mI}{\mathcal{I}}

\newcommand{\mK}{\mathcal{K}}

\newcommand{\M}{\mathcal{M}}

\newcommand{\mQ}{\mathcal{Q}}
\newcommand{\mS}{\mathcal{S}}

\newcommand{\mZ}{\mathcal{Z}}

\newcommand{\Max}[2]{\M^{(#1)}_{#2}}

\newcommand{\N}{\mathbb{N}}
\newcommand{\Z}{\mathbb{Z}}

\newcommand{\tbf}{\textbf{t}}

%%%% greek letters
\newcommand{\ap}{\alpha}
\newcommand{\vp}{\varphi}

\newcommand{\lb}{\lambda}
\newcommand{\Lb}{\Lambda}
\newcommand{\g}{\gamma}

\newcommand{\Om}{\Omega}

\newcommand{\ep}{\epsilon}
\newcommand{\sg}{\sigma}
\newcommand{\kp}{\kappa}

\newcommand{\D}[2]{\Delta^{(#1)}_{#2}}

\newcommand{\wh}[1]{\widehat{#1}}
\newcommand{\whf}{\widehat{f}}

\newcommand{\wc}[1]{\widecheck{#1}}

\newcommand{\wt}[1]{\widetilde{#1}}
\newcommand{\wte}{\widetilde{\eta}}

\newcommand{\xb}{\overline{x}}
\newcommand{\yb}{\overline{y}}

\newcommand{\zb}{\overline{z}}

\newcommand{\p}{\partial}

\newcommand{\lp}{\left(}
\newcommand{\rp}{\right)}

\newcommand{\lf}{\left|}
\newcommand{\rf}{\right|}

\newcommand{\la}{\langle}
\newcommand{\ra}{\rangle}

\newcommand{\norm}[2]{\lf \lf #1\rf \rf_{#2}}

\newcommand{\Hnorm}[3]{\lf \lf #1\rf \rf_{H^{(#2, #3)}}}

\newcommand{\Co}[1]{C^{\infty}_0(#1)}

\definecolor{darkgreen}{rgb}{0.0, 0.26, 0.15}
\definecolor{orange}{rgb}{1.0, 0.49, 0.0}

%%%%%end of Amelia's commands

\makeatletter
\def\@tocline#1#2#3#4#5#6#7{\relax
  \ifnum #1>\c@tocdepth % then omit
  \else
    \par \addpenalty\@secpenalty\addvspace{#2}%
    \begingroup \hyphenpenalty\@M
    \@ifempty{#4}{%
      \@tempdima\csname r@tocindent\number#1\endcsname\relax
    }{%
      \@tempdima#4\relax
    }%
    \parindent\z@ \leftskip#3\relax \advance\leftskip\@tempdima\relax
    \rightskip\@pnumwidth plus4em \parfillskip-\@pnumwidth
    #5\leavevmode\hskip-\@tempdima
      \ifcase #1
       \or\or \hskip 1em \or \hskip 2em \else \hskip 3em \fi%
      #6\nobreak\relax
    \hfill\hbox to\@pnumwidth{\@tocpagenum{#7}}\par% <---- \dotfill -> \hfill
    \nobreak
    \endgroup
  \fi}
\makeatother

\title[A Variant of Triangular Hilbert Transform]{A study guide for \textit{Trilinear smoothing inequalities and a variant of the triangular Hilbert transform}}
\author{Martin Hsu, Fred Lin, Amelia Stokolosa}
\date{}

\begin{document}

\begin{abstract}
    This article is a study guide for ``Trilinear smoothing inequalities and a variant of the triangular Hilbert transform'' by Christ, Durcik, and Roos \cite{CHRIST2021107863}. We first present the standard techniques in the study of oscillatory integrals with the simpler toy model of a Hilbert transform along a parabola. These standard techniques prove to be insufficient in the study of the triangular Hilbert transform with curvature. The central and novel idea in their proof of the $L^p$-boundedness of the triangular Hilbert transform with curvature is a trilinear smoothing inequality which we also examine in this article.

%In this study guide, we first introduce the singular Brascamp-Lieb form\cite{durcik2021singular}, which includes Hilbert transform, Bilinear Hilbert transform, twisted paraproduct, triangular Hilbert transform and other multilinear singular integrals. There are several variant of singular Brascamp-Lieb forms. For example, we may replace the linear projection in the function by a nonlinear map. With the aid of oscillatory integral, we may obtain extra cancellation due to the curvature in the nonlinear map and are able to deduce $L^{p}$ bound for this kind of objects.

%%%%% Highlight what's added in the 

\end{abstract}

\maketitle

\normalsize
\tableofcontents

\section{Notation}

\begin{center}

\begin{tabularx}{0.8\textwidth} { 
  | >{\raggedright\arraybackslash}X 
  | >{\centering\arraybackslash}X 
  | >{\raggedleft\arraybackslash}X | }
\hline
    $\1_E$ & the characteristic function of $E$\\
\hline
    \(a\lesssim b \) & \(\exists C>0\) such that \(a \leq C\cdot b\)\\
\hline
$a \sim b$ & $a \lesssim b \textrm{ and } a \gtrsim b$\\
\hline
\(a\ll b\) & \((b+a) \sim b \sim (b-a)\)\\
\hline
$k_1 \vee k_2$ & $\max\BR{k_1, k_2}$\\
\hline
\(k_1\wedge k_2\) & \(\min\BR{k_1,k_2}\)\\
\hline
$\mathcal{M} f $ & the Hardy-Littlewood maximal operator\\
\hline
$\mathcal{M}_\sigma f$ & the shifted maximal operator\\
\hline
$\mathscr{M}\br{f,g}$ & the bilinear maximal operator\\
\hline
$\la x \ra$ & $(1+|x|^2)^{1/2}$\\
\hline
$K \Subset X$ & $K$ is compactly supported in $X$\\
\hline
$\mathcal{S}$ & the space of Schwartz functions\\
\hline
$\mathcal{D}$ & the multiplicative derivative \\
\hline
\end{tabularx}

\end{center}
%%%% Explain \lesssim \ll
% \begin{align*}
%     %\widehat{\varphi}, \widecheck{\varphi},\ \br{\phi}^{\vee}&, \textrm{Fourier Stuff}\\
%     %\mathcal{F}^{-1}&, \textrm{Fourier inverse}\\
%     %\mathcal{F}&, \textrm{Fourier transform}\\
%     %\1_E&, \textrm{Characteristic function of }E\\
%     %a &\sim b \Leftrightarrow a \lesssim b \textrm{ and } a \gtrsim b\\
%     %k_1 \vee k_2&:= \max\{k_1, k_2\}\\
%     %\mathcal{D} & \textrm{multiplicative derivative}\\
%     %T_{\loc} &\\
%     %\norm{f}{L^p} &\\
%     %\partial^\alpha_\xi &\\
%     %\mathcal{M} f &, \textrm{hardy-littlwood maximal operator}\\
%     %\mathcal{M}_\sigma f &, \textrm{shifted maximal operator}\\
%     %\mathscr{M}\br{f,g} &, \textrm{Bilinear maximal operator}\\
%     \mathit{M}\\
%     \mathscr{M}\\
%     %\mathrsfs{M}\\
%     \mathscr{D}\\
%     \mathfrak{F}\\
%     \mathscr{M}\\
%     %\la x \ra &= (1+|x|^2)^{1/2}
% \end{align*}

\section{Introduction}
\subsection{Brascamp-Lieb inequalities} \ 

Recall the following standard inequalities:
\begin{itemize}
           \item (H\"older's inequality) For $1\leq p_{1},p_{2}\leq \infty$, $\frac{1}{p_{1}}+\frac{1}{p_{2}}=1$,
           \[
           \left|\int_{\mathbb{R}}f_{1}(x)f_{2}(x)dx\right|\lesssim \|f_{1}\|_{L^{p_{1}}}\|f_{2}\|_{L^{p_{2}}}.
           \]
           
            \item (Young's convolution inequality) For $1\leq p_{1},p_{2},p_{3}\leq \infty$, $\frac{1}{p_{1}}+\frac{1}{p_{2}}+\frac{1}{p_{3}}=2$,
           \[
           \left|\int_{\mathbb{R}^{2}}f_{1}(x_{1})f_{2}(x_{1}-x_{2})f_{3}(x_{2})dx_{1}dx_{2}\right|\lesssim \prod_{j=1}^{3}\|f_{j}\|_{L^{p_{j}}}.
           \]
           
           \item (Loomis–Whitney inequality)
           \[
           \left|\int_{\mathbb{R}^{3}}f_{1}(x_{2},x_{3})f_{2}(x_{1},x_{3})f_{3}(x_{1},x_{2})dx_{1}dx_{2}dx_{3}\right|\lesssim \prod_{j=1}^{3}\|f_{j}\|_{L^{2}}.
           \]
\end{itemize}
The above inequalities belong to a larger class of estimates known as the ``Brascamp-Lieb inequalities''. 

\begin{theorem}[2008, J.Bennett, A.Carbery, M.Christ, T.Tao \cite{bennett2008brascamp}]
Let $\{B_{j}\}_{j=1}^{m}$, $B_{j}:\mathbb{R}^{n}\rightarrow \mathbb{R}^{n_{j}}$, define linear projections. Then
\begin{equation}
\left|\int_{\mathbb{R}^{n}}\prod_{j=1}^{m}f_{j}(B_{j}x)dx\right|\lesssim_{B_{j},p_{j}} \prod_{j=1}^{m}\|f_{j}\|_{L^{p_{j}}(\mathbb{R}^{j})}
\end{equation}
if and only if $\{B_{j}\}_{j=1}^{m}$ and $\{p_{j}\}_{j=1}^{m}$ satisfy the following two conditions:\\
\begin{align}
    &\quad n=\sum_{j=1}^{m}\frac{n_{j}}{p_{j}}\label{cond1BL},\\
    &\quad \operatorname{dim}V\leq \sum_{j=1}^{m}\frac{\operatorname{dim}(B_{j}V)}{p_{j}}\; \text{for all subspaces}\; V\subseteq \mathbb{R}^{n} \label{cond2BL}.
\end{align}
   \end{theorem}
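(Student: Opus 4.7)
The argument splits into two halves: necessity of the two conditions, which is elementary and testing-based, and sufficiency, which is the substantive content of Bennett--Carbery--Christ--Tao.

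\emph{Necessity.} For \eqref{cond1BL}, substitute $f_j(y) = g_j(\lambda y)$ for $\lambda>0$ and compare scalings: the left side acquires $\lambda^{-n}$ while $\|f_j\|_{p_j} = \lambda^{-n_j/p_j}\|g_j\|_{p_j}$, so a bound uniform in $\lambda$ forces $n=\sum_j n_j/p_j$. For \eqref{cond2BL}, fix a subspace $V\subseteq \mathbb{R}^n$ and take $f_j$ to be the indicator of a $\delta$-tube of unit length around $B_j V$ inside a large ball of $\mathbb{R}^{n_j}$. Then $\|f_j\|_{p_j}\sim \delta^{(n_j-\dim B_jV)/p_j}$, while the condition that $B_j x$ lies in its tube for every $j$ is satisfied throughout (a compact portion of) the $\delta$-neighborhood of $V$ in $\mathbb{R}^n$, so the left side is $\gtrsim \delta^{n-\dim V}$. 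Matching powers of $\delta$ as $\delta\to 0$ and invoking \eqref{cond1BL} yields \eqref{cond2BL}.

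\emph{Sufficiency.} I would follow the heat-flow monotonicity approach of \cite{bennett2008brascamp}. The first step is to verify the inequality directly for Gaussian inputs $f_j(y)=e^{-\pi\langle A_j y,y\rangle}$ with $A_j$ positive definite: the left side reduces to an explicit determinant in $A_j$ and $B_j$, and the resulting bound is a finite-dimensional inequality that holds under \eqref{cond1BL} and \eqref{cond2BL}. The second step is to bootstrap to general nonnegative $f_j\in L^{p_j}$: let $u_j(\cdot,t)$ solve the heat equation on $\mathbb{R}^{n_j}$ with initial data $f_j^{p_j}$, set $f_{j,t} = u_j^{1/p_j}$ (so the $L^{p_j}$ norms are preserved in $t$), and consider
\[
Q(t) = \int_{\mathbb{R}^n}\prod_{j=1}^m f_{j,t}(B_j x)\,dx.
\]
The central claim is $Q'(t)\geq 0$: differentiate under the integral, use $\partial_t u_j = \Delta u_j$, and rearrange the derivative into an integrated quadratic form built from the Hessians of $\log u_j$ pulled back through $B_j$. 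As $t\to\infty$ the $f_{j,t}$ become Gaussian, where the first step supplies an explicit bound; monotonicity then transports it back to $t=0$, completing the proof.

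The main obstacle is the monotonicity lemma $Q'(t)\geq 0$. Encoding the purely dimensional assumption \eqref{cond2BL} as a concrete pointwise inequality for the relevant quadratic form --- and verifying that \eqref{cond2BL} for \emph{all} subspaces $V$ is exactly the right hypothesis --- is the heart of \cite{bennett2008brascamp} and is where I expect the real work to lie. The Gaussian reduction is comparatively soft, and the necessity half is essentially a scaling computation, so almost the full difficulty concentrates in this single monotonicity step.
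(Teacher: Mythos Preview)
The paper does not prove this theorem at all; it merely states it as background and adds the one-line remark that ``the necessity of conditions \eqref{cond1BL} and \eqref{cond2BL} follows from a scaling argument on the whole space and on subspaces, respectively.'' Your necessity sketch is exactly this scaling argument, so on that half you match the paper's (very brief) treatment. Your sufficiency sketch via heat-flow monotonicity is a reasonable outline of one of the approaches in \cite{bennett2008brascamp}, but there is nothing in the present paper to compare it against, since the authors simply cite the result and move on.
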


\begin{remark}
    The necessity of conditions \eqref{cond1BL} and \eqref{cond2BL} follows from a scaling argument on the whole space and on subspaces, respectively. 
\end{remark}

\subsection{Singular Brascamp-Lieb Inequalities} \ 

There are several variants of the Brascamp-Lieb inequality. In the following, we introduce a singular variant.
Adding a singular kernel on the left-hand side, many important multilinear singular integrals can be written in the following form:
\begin{equation}
\left|\int_{\mathbb{R}^{n}}\prod_{j=1}^{m}f_{j}(B_{j}x)\textcolor{red}{K(B_{0}x)}dx\right|\lesssim_{B_{j},p_{j}} \prod_{j=1}^{m}\|f_{j}\|_{L^{p_{j}}(\mathbb{R}^{j})},
\end{equation}
where $\wh{K}$ satisfies the Mikhlin multiplier condition; that is 
\begin{equation}\label{mikhlincond}
|\partial^{\alpha}\widehat{K}(\xi)|\lesssim |\xi|^{-|\alpha|}
\end{equation}
     for all $\xi \neq 0$ and all multi-indices $\alpha$ up to a suitably large order. Unlike with the Brascamp-Lieb inequality, there is no characterization of the $L^p$-boundedness of the singular Brascamp-Lieb inequality. One thus needs to study singular Brascamp-Lieb inequalities in a  case by case basis. 

\begin{example}[Hilbert Transform]
    Define the Hilbert transform as follow
 \begin{equation}
      (Hf)(x):=\operatorname{p.v.}\int_{\mathbb{R}}f(x-y)\frac{1}{y}dy.
 \end{equation}
 In 1928, M.Riesz showed that the Hilbert transform is bounded on $L^{p}$ spaces for $1<p<\infty$.
 In the dual form, we have
 \begin{equation}
      \left| \operatorname{p.v.}\int_{\mathbb{R}^{2}}f_{1}(x_{1})f_{2}(x_{2})\frac{1}{x_{1}-x_{2}}dx_{1}dx_{2} \right| \lesssim \prod_{j=1}^{2}\|f_{j}\|_{L^{p_{j}}},
 \end{equation}
 for $1<p_{1},p_{2}<\infty$ and $\frac{1}{p_{1}}+\frac{1}{p_{2}}=1$.
\end{example}
We may write the Hilbert transform in the Fourier multiplier form, 
\begin{equation}
    (Hf)(x)=c\int_{\mathbb{R}}\operatorname{sgn}(\xi)\widehat{f}(\xi)e^{2\pi ix\xi}d\xi, \quad c\neq 0.
\end{equation}
In general, we may define the Fourier multiplier operator $T_{m}$ of the associated function $m$ as follows
\begin{equation}
    (T_{m}f)(x):=\int_{\mathbb{R}}m(\xi)\widehat{f}(\xi)e^{2\pi ix\xi}d\xi.
\end{equation}
In 1956, Mikhlin proved that if $m$ satisfied the Mikhlin condition \eqref{mikhlincond}, then $T_{m}$ is bounded on $L^{p}$ where $1<p<\infty$.
\begin{example}[Coifman-Meyer Operator]
For a function $m$, we can define the associated bilinear Fourier multiplier operator 
\begin{equation}
      T_{m}(f_{1},f_{2})(x):=\int_{\mathbb{R}^{2}}m(\xi_{1},\xi_{2})\widehat{f}_{1}(\xi_{1})\widehat{f}_{2}(\xi_{2})e^{2\pi ix(\xi_{1}+\xi_{2})}d\xi_{1}d\xi_{2}.
\end{equation}
In 1978, R.Coifman and Y.Meyer showed that if $m$ satisfied the Mikhlin condition \eqref{mikhlincond}, then $T_{m}$ is bounded from $L^{p_{1}}\times L^{p_{2}}$ to $L^{p_{3}}$ where $\frac{1}{p_{1}}+\frac{1}{p_{2}}=\frac{1}{p_{3}}$, $1<p_{1},p_{2},p_{3}<\infty$.  In the physical side and in the dual form, we can rewrite the Coifman-Meyer operator as
\begin{equation}
    \int_{\mathbb{R}^{3}}f_{1}(y_{1})f_{2}(y_{2})f_{3}(x)K(x-y_{1},x-y_{2})dxdy_{1}dy_{2}
\end{equation}
which is again written in a singular Brascamp-Lieb form.
\end{example}

\begin{example}[Bilinear Hilbert Transform]
Notice that the singularity set of the Coifman-Meyer operator is only at the origin. We may also study the multilinear Fourier multiplier which has a nontrivial singularity set. Define the bilinear Hilbert transform as follows
\begin{equation}
    BH_{\beta}(f_{1},f_{2})(x):=\operatorname{p.v.}\int_{\mathbb{R}^{2}}f_{1}(x+t)f_{2}(x+\beta t)\frac{1}{t}dt.
\end{equation}
Written in the multiplier form, we have
\begin{equation}
     BH_{\beta}(f_{1},f_{2})(x)=\int_{\mathbb{R}^{2}}m(\xi_{1}-\beta \xi_{2})\widehat{f}_{1}(\xi_{1})\widehat{f}_{2}(\xi_{2})e^{2\pi ix(\xi_{1}+\xi_{2})}d\xi_{1}d\xi_{2}
\end{equation}
whose singularity set is a line instead of a point. In 1997, C.Thiele and M.Lacey\cite{MLCT1997BHT} showed that
    for $\frac{1}{p_{3}^{'}}=\frac{1}{p_{1}}+\frac{1}{p_{2}}$ with $1<p_{3}'<2$, $2<p_{1},p_{2}<\infty$ and $f_{1},f_{2}\in \mathcal{S}(\mathbb{R})$, we have
\begin{equation}
    \|BH_{\beta}(f_{1},f_{2})\|_{L^{p_{3}^{'}}}\lesssim_{p,\beta}\|f_{1}\|_{L^{p_{1}}}\cdot \|f_{2}\|_{L^{p_{2}}}
\end{equation}
\end{example}

\begin{example}[Two Dimensional Bilinear Hilbert Transform, Twisted Paraproduct]
    In 2010, C.Demeter and C.Thiele\cite{demeter2010two} studied the following two dimensional BHT
\begin{equation}
BH_{2D}(f_{1},f_{2})(x,y):=\int_{\mathbb{R}^{2}}f_{1}((x,y)+(s,t))f_{2}((x,y)+A(s,t))K(s,t)dsdt.
\end{equation}
They classified all the cases and proved some bounds for each case except for one degenerate case below, which they called the twisted paraproduct:
\begin{equation}
T(f_{1},f_{2})(x,y):=\int_{\mathbb{R}^{2}}f_{1}(x+s,y)f_{2}(x,y+t)K(s,t)dsdt.
\end{equation}
 In 2012, V.Kovac proved a bound for the twisted paraproduct\cite{kovavc2012boundedness}. Later, P.Durcik utilized the twisted technology and proved bounds for the following multilinear singular integral with a more entangled structure ( see \cite{durcik20144} and \cite{durcik2017}):
 \begin{equation}
     \int_{\mathbb{R}^{4}}f_{1}(x,y)f_{2}(x',y)f_{3}(x',y')f_{4}(x,y')K'(x'-x,y'-y)dxdx'dydy'.
 \end{equation}
\end{example}

Usually, kernels in lower dimensions are difficult to estimate. The kernel and functions appearing in the two-dimensional bilinear Hilbert transform have inputs in two dimensions. We can study a bilinear operator with a two-dimensional input function and a one-dimensional kernel. Define the triangular Hilbert transform as follows:
\begin{equation}
    T(f_{1},f_{2})(x):=\operatorname{p.v.}\int_{\mathbb{R}^{2}}f_{1}(x+t,y)f_{2}(x,y+t)\frac{dt}{t}. 
\end{equation}
Written in the dual form and after a change of variables, we obtain the following trilinear form:
\begin{equation}
      \Lambda (f_{1},f_{2},f_{3})=\operatorname{p.v.}\int_{\mathbb{R}^{3}}f_{0}(x_{1},x_{2})f_{1}(x_{2},x_{0})f_{2}(x_{0},x_{1})\frac{1}{x_{0}+x_{1}+x_{2}}dx_{0}dx_{1}dx_{2}  .
\end{equation}
The following is one of the main conjectures in the study of multilinear singular integrals.
\begin{conjecture}
    The trilinear form $\Lambda$ satisfies the following $L^{3}$ estimates:
\begin{equation}
    |\Lambda (f_{1},f_{2},f_{3})|\lesssim \prod_{i=0}^{2}\|f_{i}\|_{L^{3}}.
\end{equation}
\end{conjecture}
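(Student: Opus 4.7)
The statement is an outstanding open conjecture, so my proposal is genuinely speculative; it follows the template that succeeded for the bilinear Hilbert transform and for the curved variants treated in \cite{CHRIST2021107863}, and flags where the argument is expected to break.

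First I would perform a Calder\'on--Zygmund decomposition of the kernel: write
\[
\frac{1}{x_0+x_1+x_2} = \sum_{k\in\mathbb{Z}} 2^{-k}\psi\br{2^{-k}(x_0+x_1+x_2)}
\]
for a smooth bump $\psi$ supported on $\{1/2\le |t|\le 2\}$, producing $\Lambda = \sum_{k} \Lambda_k$. A direct H\"older estimate at a single scale $k$ gives $|\Lambda_k(f_0,f_1,f_2)| \lesssim \|f_0\|_3\|f_1\|_3\|f_2\|_3$ with no decay in $k$, so the entire content of the conjecture lies in extracting cancellation between the pieces $\Lambda_k$ to make the $k$-sum converge.

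Next, following Lacey--Thiele, I would set up a time--frequency discretization adapted to the symmetries of $\Lambda$. The form is invariant under simultaneous modulations of the three inputs along the codimension-one family compatible with the cyclic structure, together with translation and dilation; the analogue of the bilinear Hilbert transform wave-packet decomposition should split each $f_i$ into wave packets localized in a two-dimensional phase space and assemble them into tri-tiles and trees. I would first aim at a single-tree $L^3\times L^3\times L^3$ estimate via $TT^{*}$, and then attempt a Bessel- or Carleson-type tree selection argument to sum across trees.

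The key obstacle, and the reason the conjecture remains open, is that the trilinear smoothing mechanism of \cite{CHRIST2021107863} relies on curvature to produce oscillatory-integral decay of the form $2^{-\sigma(j-k)}$ between frequency scale $j$ and kernel scale $k$; here the constraint $x_0+x_1+x_2=0$ is linear, so van der Corput-type integration by parts yields no gain, and the three projections $(x_1,x_2)$, $(x_2,x_0)$, $(x_0,x_1)$ make the configuration Loomis--Whitney rigid, so no H\"older rearrangement can close the estimate. I would therefore expect a genuine attack to require either a new cancellation lemma tailored to this symmetric linear phase, or an entirely new quasi-orthogonality principle that does not factor through a single-scale smoothing bound. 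Failing that, a more modest program would be to prove partial results: $L^p$ bounds with one input in $L^\infty$, sparse or restricted weak-type bounds, or $L^3$ bounds in the presence of an auxiliary modulation that breaks the symmetry of the form.
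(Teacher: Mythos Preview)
Your proposal correctly identifies the situation: this statement is presented in the paper as an open \emph{conjecture}, not as a theorem, and the paper does not contain a proof of it. The paper explicitly says ``The following is one of the main conjectures in the study of multilinear singular integrals'' and then moves on to the curved variant $T(f_1,f_2)(x,y)=\mathrm{p.v.}\int f_1(x+t,y)f_2(x,y+t^2)\,dt/t$, for which it proves $L^p$ bounds via the trilinear smoothing inequality. So there is nothing to compare your attempt against.

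Your speculative outline is well-informed and accurately locates the obstruction: the smoothing inequality in Theorem~\ref{thm 5 - smoothing} exploits the curvature of $t\mapsto t^2$ through stationary phase, and in the linear triangular form the phase $x_0+x_1+x_2$ has no curvature, so no van der Corput gain is available. Your remark about Loomis--Whitney rigidity is also to the point. One small correction: the Calder\'on--Zygmund decomposition and single-scale H\"older bound you write down are fine, but be aware that even the single-tree estimate you propose is already nontrivial here---the cyclic structure of the three projections prevents a straightforward $TT^*$ argument from closing, which is part of why the problem resists the Lacey--Thiele template. Your list of partial targets (one input in $L^\infty$, restricted weak-type) is reasonable; note that even those remain open in the full linear case.
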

We can then ask for which $p_{i}$ does the above estimate hold. But since we can permute $f_{0},f_{1},f_{2}$, if we have a bound for the triangular Hilbert transform at any single exponent, then by permutation and interpolation, we have the bound in the conjecture above. The reason why the triangular Hilbert transform is important is that it implies both the uniform bound of the bilinear Hilbert transform and the bound of the Carleson operator.

\begin{remark}
    The reader is encouraged to refer to the survey \cite{durcik2021singular} for more on singular Brascamp-Lieb inequalities.
\end{remark}

\subsection{Introducing curvature} \ 

 Now we turn to the case when the map inside those functions is not just linear. One of the simplest nonlinear maps is the monomial $t\mapsto t^{\alpha}$ for nontrivial $\alpha$. We have the following $L^{2}$ bound of the bilinear Hilbert transform along monomial

\begin{theorem}[2013, X.Li \cite{XL2013}]
    Define the bilinear Hilbert transform along monomial $\Gamma =(t,t^{\alpha})$ for $\alpha \geq 2$ as follows:
\begin{equation}
    BH_{\Gamma}(f_{1},f_{2})(x):=\operatorname{p.v.}\int_{\mathbb{R}^{2}}f_{1}(x+t)f_{2}(x+ t^{\alpha})\frac{1}{t}dt.
\end{equation}
Then for $f_{1},f_{2}\in \mathcal{S}(\mathbb{R})$, we have
\begin{equation}
    \|BH_{\Gamma}(f_{1},f_{2})\|_{L^{1}}\lesssim \|f_{1}\|_{L^{2}}\|f_{2}\|_{L^{2}}.
\end{equation}
\end{theorem}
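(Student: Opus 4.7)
The plan is to exploit the curvature $\alpha \geq 2$ to obtain enough additional oscillation beyond what is available in the flat bilinear Hilbert transform setting. By duality the theorem reduces to the trilinear inequality $|\Lambda(f_1,f_2,g)| \lesssim \|f_1\|_2 \|f_2\|_2 \|g\|_\infty$ where
$$\Lambda(f_1,f_2,g) = \operatorname{p.v.}\int_{\mathbb{R}^2} f_1(x+t)\, f_2(x+t^\alpha)\, g(x)\, \frac{dt\,dx}{t}.$$
First I would perform a dyadic decomposition of the kernel $1/t$ into pieces localized at $|t|\sim 2^k$, giving $\Lambda = \sum_k \Lambda_k$, and reduce by symmetry to handling the large-$t$ regime (the small-$t$ regime collapses to a lower-order correction controllable by standard Calder\'on--Zygmund theory).

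Next I would pass each $\Lambda_k$ to the Fourier side and study the bilinear multiplier
$$m_k(\xi,\eta) = \int e^{2\pi i(t\xi + t^\alpha \eta)}\psi(2^{-k}t)\,\frac{dt}{t}.$$
The phase $\phi(t) = t\xi + t^\alpha \eta$ satisfies $\phi''(t) = \alpha(\alpha-1)t^{\alpha-2}\eta$, which is nondegenerate precisely because $\alpha \geq 2$. After rescaling $t = 2^k s$, van der Corput's lemma in its second-derivative form should yield
$$|m_k(\xi,\eta)| \lesssim \min\bigl(1,\; (2^{k\alpha}|\eta|)^{-1/2}\bigr),$$
together with rapid decay away from the resonant strip where $|\xi| \sim 2^{k(\alpha-1)}|\eta|$. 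In the non-resonant region, $\Lambda_k$ behaves as a Coifman--Meyer paraproduct whose constants decay geometrically in the scale separation, and a standard Littlewood--Paley and H\"older argument would dispose of this part.

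The resonant region, where $m_k$ concentrates near the curve $\xi = -\alpha\, 2^{k(\alpha-1)}\eta$, is the heart of the argument. Here I would introduce a wave-packet decomposition of $f_1$ and $f_2$ at frequency scales matched to the stationary point of $\phi$, mimicking the Lacey--Thiele tri-tile architecture but adapted to the curve. The crucial gain over the flat BHT is that the resonant strip for each $k$ is genuinely curved and geometrically distinct across scales, which should produce almost-orthogonality of the form $|\langle \Lambda_k, \Lambda_{k'}\rangle| \lesssim 2^{-\epsilon|k-k'|}\|f_1\|_2^2\|f_2\|_2^2$ — a mechanism unavailable in the linear case.

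The hardest step is securing a single-scale estimate $|\Lambda_k(f_1,f_2,g)| \lesssim \|f_1\|_2\|f_2\|_2\|g\|_\infty$ with a constant that is either summable or compatible with the almost-orthogonality. Because the target is the endpoint $L^1$, no logarithmic loss in $k$ is permitted. I would attempt a $TT^*$ argument in $f_2$ combined with the $(2^{k\alpha}|\eta|)^{-1/2}$ bound above; the resulting quadratic form in $f_1$ should, after another stationary-phase reduction, resemble a pseudo-differential operator whose $L^2$-boundedness follows from Calder\'on--Zygmund theory. I anticipate that the real obstacle is controlling the exceptional frequency regions where the phase is close to degenerating, and this is likely where a $\sigma$-uniformity or Gowers-type estimate must be invoked to rule out such concentration. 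Summing the single-scale bound against the almost-orthogonality from the previous step then closes the argument.
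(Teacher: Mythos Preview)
This theorem is not proved in the paper. It appears in the introduction as a cited background result (attributed to X.~Li, 2013) to motivate the study of curvature in multilinear singular integrals; the paper simply states it and moves on to its own object, the triangular Hilbert transform along a parabola. There is therefore no ``paper's own proof'' to compare your proposal against.

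For what it is worth, your sketch is a reasonable outline of the general strategy in Li's original work: dyadic decomposition, stationary-phase analysis of the multiplier, separation into resonant and non-resonant regions, and---as you correctly anticipate---a $\sigma$-uniformity argument to handle the delicate resonant part. But none of this is carried out or even sketched in the present paper, so your proposal cannot be assessed against it.
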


Although bounds for the triangular Hilbert transform remain an open problem, M.Christ, P.Durcik and J.Roos proved $L^p$ bounds for the triangular Hilbert transform along a parabola:

\begin{equation}\label{eq triangular HT}
    T(f_{1},f_{2})(x,y):=\operatorname{p.v.}\int_{\mathbb{R}}f_{1}(x+t,y)f_{2}(x,y+t^{2})\frac{dt}{t}.
\end{equation}

\vspace{.5in}

\subsection{Main results} \

\begin{theorem}[Theorem 1 in \cite{CHRIST2021107863}]\label{thm 1 - THT Lp}
For $f_{1},f_{2}\in \mathcal{S}(\mathbb{R}^{2})$, $T$ as defined in \eqref{eq triangular HT} is bounded as a map from $L^{p}\times L^{q}$ to $L^{r}$, where $ \frac{1}{p}+\frac{1}{q}=\frac{1}{r}$, $p,q\in (1,\infty )$, and $r\in [1,2)$.
\end{theorem}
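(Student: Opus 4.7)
The proof begins by dualizing: pairing $T(f_1, f_2)$ with a test function $f_3 \in L^{r'}$ produces the trilinear form
\begin{equation*}
    \Lambda(f_1, f_2, f_3) = \operatorname{p.v.} \int_{\mathbb{R}^3} f_1(x+t, y)\, f_2(x, y+t^2)\, f_3(x, y)\, \frac{dt\, dx\, dy}{t},
\end{equation*}
and the goal becomes establishing $|\Lambda(f_1,f_2,f_3)| \lesssim \|f_1\|_p \|f_2\|_q \|f_3\|_{r'}$ whenever $\tfrac{1}{p}+\tfrac{1}{q}+\tfrac{1}{r'}=1$ with $p,q\in(1,\infty)$ and $r'\in(2,\infty]$.

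First I would perform a Calder\'on--Zygmund dyadic decomposition of the kernel, writing $\tfrac{1}{t} = \sum_{k\in\Z} \psi_k(t)$ with $\psi_k$ an odd Schwartz bump concentrated on $|t|\sim 2^{-k}$, yielding $\Lambda = \sum_k \Lambda_k$. The parabolic dilation symmetry $(x,y)\mapsto (2^{-k}x, 2^{-2k}y)$ (matched to the profile $t\mapsto(t,t^2)$) normalizes each $\Lambda_k$ to a fixed scale modulo a rescaling of the $f_i$, so a simple scaling argument gives a uniform-in-$k$ bound. To make $\sum_k \Lambda_k$ absolutely summable, the real task becomes a single-scale estimate
\begin{equation*}
    |\Lambda_k(f_1, f_2, f_3)| \lesssim 2^{-\epsilon k}\, \|f_1\|_p\, \|f_2\|_q\, \|f_3\|_{r'}
\end{equation*}
for some $\epsilon > 0$ and suitably frequency-localized $f_i$.

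Next I would apply Littlewood--Paley decompositions adapted to the anisotropy: decompose $f_1 = \sum_j P^x_j f_1$ in its $x$-frequency, $f_2 = \sum_\ell P^y_\ell f_2$ in its $y$-frequency, and similarly for $f_3$, then split $\Lambda_k$ accordingly. The natural resonance for the parabola is $j \approx k$ and $\ell \approx 2k$. In any non-resonant regime --- where $|j-k|$ or $|\ell - 2k|$ is large, or $f_3$ contributes a mismatched frequency --- repeated integration by parts in $t$ against the oscillation from the frequency mismatch extracts rapid decay in the relevant exponent differences. The resulting pieces can then be identified as Coifman--Meyer-type paraproducts and bounded via vector-valued maximal and paraproduct inequalities, giving summable off-diagonal contributions.

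The heart of the proof, and the main obstacle, is the resonant diagonal regime $j\approx k$, $\ell\approx 2k$, in which no Fourier mismatch is available. This is precisely where the \emph{trilinear smoothing inequality} --- the central novelty of \cite{CHRIST2021107863} --- is invoked. It asserts that at the diagonal one still enjoys a quantitative gain $2^{-\epsilon k}$ measured in a weaker (typically $L^\infty$-type) norm on the $f_i$, provided the profile curve has nondegenerate curvature, which the parabola does since $t\mapsto(t,t^2)$ has nonvanishing second derivative. I would prove this smoothing estimate through a $TT^*$-type iteration, raising $|\Lambda_k|$ to a high even power to convert it into a higher-multilinear oscillatory integral whose phase has nondegenerate mixed Hessian thanks to the curvature; stationary phase and a sublevel-set lemma then extract the decay factor in $k$. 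Interpolating the resulting smoothing bound against trivial H\"older estimates produces summability of the diagonal pieces throughout the full range $p,q\in(1,\infty)$, $r\in[1,2)$, yielding the theorem.
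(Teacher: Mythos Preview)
Your outline captures the overall architecture --- dyadic kernel decomposition, Littlewood--Paley splitting, off-diagonal versus diagonal analysis, and a smoothing inequality at the diagonal --- but there are two genuine gaps.

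First, your summability mechanism is misidentified. By the anisotropic dilation symmetry you yourself invoke, the pieces $\Lambda_k$ (with $k$ the kernel scale) are all \emph{identical} up to rescaling the inputs, so no decay $2^{-\epsilon k}$ in the kernel scale is possible. The paper instead indexes the Littlewood--Paley pieces by the \emph{relative} frequency $k=(k_1,k_2)$, writing $T_j(\Delta^{(1)}_{j+k_1}f_1,\Delta^{(2)}_{2j+k_2}f_2)$, and regroups as $T^{(k)}=\sum_j T_j(\Delta^{(1)}_{j+k_1}f_1,\Delta^{(2)}_{2j+k_2}f_2)$. The smoothing inequality yields decay $2^{-\sigma|k|}$ in this relative-frequency parameter (equivalently, in the $\lambda$ of Theorem~\ref{thm 5 - smoothing}); the sum over the kernel scale $j$ is then handled not by decay but by Cauchy--Schwarz and Littlewood--Paley square-function orthogonality. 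Your proposal conflates these two summations.

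Second, your treatment of the non-resonant pieces is too optimistic. Integration by parts in $t$ does give good multiplier decay, but the resulting bilinear objects are \emph{not} Coifman--Meyer paraproducts: the inputs $f_1(x+t,y)$ and $f_2(x,y+t^2)$ share only one variable each with the output, so what emerges is an \emph{anisotropic twisted paraproduct} of the form $\int f_1(x+s,y)f_2(x,y+t)K(s,t)\,ds\,dt$. Bounding this is Theorem~\ref{thm_ani_twist_para} in the paper, itself a nontrivial extension of Kova\v{c}'s twisted-paraproduct theorem, and cannot be absorbed into standard paraproduct technology. The paper treats the low- and mixed-frequency regions by verifying an anisotropic Mikhlin condition on the symbol and then invoking this separate theorem.
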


\begin{theorem}[Theorem 5 in \cite{CHRIST2021107863}]\label{thm 5 - smoothing}
    Let $\zeta$ be a smooth function with compact support in $\mathbb{R}^{2}\times (\mathbb{R}\setminus \{0\})$. Consider the following trilinear form
\begin{equation}
    \Lambda(f_{1},f_{2},f_{3}):=\int_{\mathbb{R}^{3}}f_{1}(x+t,y)f_{2}(x,y+t^{2})f_{3}(x,y)\zeta (x,y,t)dxdydt.
\end{equation}
Then there exist $C=C(\zeta)>0$ and $\sigma >0$ such that for all $f_{1},f_{2},f_{3}\in \mathcal{S}(\mathbb{R})$, we have
\begin{equation}\label{smoothing1}
    | \Lambda(f_{1},f_{2},f_{3})|\leq C \|f_{1}\|_{H^{(-\sigma ,0)}}\|f_{2}\|_{H^{(0 ,-\sigma)}}\|f_{3}\|_{L^{\infty}},
\end{equation}
where
\begin{equation*}
    \|f\|^{2}_{H^{(a ,b)}}:=\int_{\mathbb{R}^{2}}|\widehat{f}(\xi_{1},\xi_{2})|^{2}(1+|\xi_{1}|^{2})^{\frac{a}{2}}(1+|\xi_{2}|^{2})^{\frac{b}{2}}d\xi_{1}d\xi_{2}.
\end{equation*}
\end{theorem}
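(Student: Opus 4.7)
The plan is to reduce Theorem \ref{thm 5 - smoothing} to a bilinear $L^2$ estimate and then exploit the curvature of $t \mapsto (t, t^2)$ via a $TT^*$ argument with Littlewood--Paley decomposition. Define the bilinear operator
$$B(f_1, f_2)(x, y) := \int f_1(x+t, y)\, f_2(x, y+t^2)\, \zeta(x, y, t)\, dt,$$
so that $\Lambda(f_1, f_2, f_3) = \int f_3 \cdot B(f_1, f_2)\, dx\, dy$. Since $\zeta$ has compact support in $(x, y, t)$, the function $B(f_1, f_2)$ is supported in a fixed compact set $E \subset \R^2$, and Cauchy--Schwarz gives
$$|\Lambda(f_1, f_2, f_3)| \leq \|f_3\|_{L^\infty}\, |E|^{1/2}\, \|B(f_1, f_2)\|_{L^2}.$$
It therefore suffices to prove $\|B(f_1, f_2)\|_{L^2} \lesssim \|f_1\|_{H^{(-\sigma, 0)}} \|f_2\|_{H^{(0, -\sigma)}}$ for some $\sigma > 0$.

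I would Littlewood--Paley decompose $f_1 = \sum_{j \geq 0} f_{1,j}$ with $\widehat{f_{1,j}}$ supported on $|\xi_1| \sim 2^j$, and $f_2 = \sum_{k \geq 0} f_{2,k}$ with $\widehat{f_{2,k}}$ supported on $|\eta_2| \sim 2^k$ (plus low-frequency residuals handled trivially), and aim for the frequency-localized estimate
$$\|B(f_{1,j}, f_{2,k})\|_{L^2} \lesssim 2^{-\sigma'(j+k)}\, \|f_{1,j}\|_{L^2}\, \|f_{2,k}\|_{L^2}$$
for some $\sigma' > 0$. The mechanism for the decay is van der Corput's lemma applied to the $t$-integral: after expanding
$$\|B(f_{1,j}, f_{2,k})\|_{L^2}^2 = \int f_{1,j}(x+t, y)\, \overline{f_{1,j}(x+s, y)}\, f_{2,k}(x, y+t^2)\, \overline{f_{2,k}(x, y+s^2)}\, \zeta \overline{\zeta}\, dx\, dy\, dt\, ds$$
and performing Plancherel in the $(x, y)$ variables, the frequency localizations of $f_{1,j}$ and $f_{2,k}$ produce oscillatory phases $(t - s)\xi_1$ with $|\xi_1| \sim 2^j$ and $(t^2 - s^2)\eta_2$ with $|\eta_2| \sim 2^k$. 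Since $\zeta$ is supported away from $t = 0$ (after splitting $\zeta$ into $t > 0$ and $t < 0$ pieces and treating them separately, one has $|t + s| \sim 1$ on the support, hence $|t^2 - s^2| \sim |t - s|$), both oscillations force the dominant contribution to lie in the strip $|t - s| \lesssim 2^{-(j \vee k)}$; this yields $\sigma' = 1/4 - \varepsilon$ after using $j \vee k \geq (j + k)/2$. Summing over $(j, k)$ via Cauchy--Schwarz against the Littlewood--Paley square function recovers the Sobolev bound for any $\sigma$ strictly less than $\sigma'$.

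The main obstacle is executing the bilinear oscillatory estimate cleanly: the $(x, y)$-Plancherel does not decouple the $f_{1,j}$-autocorrelation in $x$ from the $f_{2,k}$-autocorrelation in $y$, since both live on the same $(x, y)$ and are further coupled through the smooth factor $\zeta(x, y, t) \overline{\zeta(x, y, s)}$. The analysis must exploit the Schwartz decay in $(x, y)$ of the partial Fourier transforms of $\zeta$ to reduce the coupling to an essentially diagonal interaction in the transverse frequencies $(\eta_1, \xi_2)$, after which the effective localization $|t - s| \lesssim 2^{-(j \vee k)}$ survives quantitatively and produces the stated gain. A naive Plancherel followed by Cauchy--Schwarz, without this decoupling step, converts the pointwise van der Corput bound on the scalar integral $\int \zeta(x, y, t)\, e^{2\pi i(t\xi_1 + t^2\eta_2)}\, dt$ into an estimate that loses the entire smoothing, so the careful treatment of the spatial cross terms is the heart of the argument.
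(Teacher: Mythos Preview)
Your reduction to a bilinear $L^2$ estimate is fine, but the proposed mechanism for the frequency-localized decay has a genuine gap. The claim
\[
\|B(f_{1,j}, f_{2,k})\|_{L^2} \lesssim 2^{-\sigma'(j+k)}\, \|f_{1,j}\|_{L^2}\, \|f_{2,k}\|_{L^2}
\]
cannot be obtained from van der Corput on the scalar multiplier $m(\xi_1,\eta_2)=\int \zeta\, e^{2\pi i(t\xi_1+t^2\eta_2)}\,dt$ in the way you describe. Even in the simplest case $\zeta(x,y,t)=\chi(x)\chi(y)\psi(t)$, where there is no $(x,y)$-coupling in $\zeta$ to ``decouple'' at all, the multiplier bound $|m|\lesssim 2^{-\frac12(j\vee k)}$ does not transfer to the bilinear operator: the reference operator with $m\equiv 1$ is pointwise multiplication $f_1\cdot f_2$, which is not $L^2\times L^2\to L^2$ bounded. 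So the obstruction is not the $(x,y)$-dependence of $\zeta$, and no Schwartz decay of $\widehat\zeta$ in the transverse frequencies fixes it.

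At the $TT^*$ level the problem is the same. After expanding you have
\[
\int f_{1,j}(x+t,y)\overline{f_{1,j}(x+s,y)}\,f_{2,k}(x,y+t^2)\overline{f_{2,k}(x,y+s^2)}\,\zeta\bar\zeta\,dx\,dy\,dt\,ds,
\]
and the frequency supports $|\xi_1|\sim 2^j$, $|\eta_2|\sim 2^k$ do \emph{not} force $|t-s|\lesssim 2^{-(j\vee k)}$: the phases $e^{2\pi i(t\xi_1-s\xi_1'+t^2\eta_2-s^2\eta_2')}$ involve four independent frequency variables, so there is no single oscillatory integral in $(t,s)$ to which van der Corput applies. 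What the paper does with this expression is recognise $f_{1}(x+t,y)\overline{f_{1}(x+s,y)}=\mathcal D^{(1)}_{t-s}f_{1}(x+s,y)$ as a multiplicative derivative, but exploiting it then requires the full structure-versus-randomness machinery: a frequency-pruning lemma splits each $f_l$ into a structured piece $f_{l,\sharp}$ (a short sum of modulated smooth functions) and a remainder $f_{l,\flat}$ whose multiplicative derivatives have small low-frequency Fourier mass; the $\flat$-terms are handled by the $TT^*$ expansion combined with Fourier series on $\lambda^{-\gamma}$-cubes and non-stationary phase, while the $\sharp$-$\sharp$ interaction is reduced to a sublevel set estimate for $|\alpha(x+t,y)-2t\beta(x,y+t^2)|\le\varepsilon$ over \emph{arbitrary measurable} $\alpha,\beta$, proved by a method-of-refinements and \L ojasiewicz argument. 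Your proposal underestimates how much curvature must be used: the van der Corput input is consumed by the bilinear structure, and a second, geometric use of curvature (the sublevel set estimate) is essential.
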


\begin{remark}
The inequality \eqref{smoothing1} is equivalent to 
\begin{equation}\label{smoothing2}
     | \Lambda(f_{1},f_{2},f_{3})|\lesssim \lambda^{-\sigma}\|f_{1}\|_{L^{2}}\|f_{2}\|_{L^{2}}\|f_{3}\|_{L^{\infty}},
\end{equation}
for $\lambda >1$ under the assumption that $\widehat{f}_{j}$ is supported in $|\xi_{j}| \sim \lambda$ for at least one index $j=1,2$.
\end{remark}

\begin{remark}
There can be no smoothing inequality of the form
\begin{equation*}
    | \Lambda(f_{1},f_{2},f_{3})|\lesssim \|f_{1}\|_{H^{(0 ,-\sigma)}}\|f_{2}\|_{H^{(-\sigma ,0)}}\|f_{3}\|_{L^{\infty}}.
\end{equation*}
Suppose, for a contradiction, that such an inequality holds. Observe the following modulation symmetry of our operator:
\begin{equation}\label{smoothingimpossible}
    \Lambda(f_{1},f_{2},f_{3})=\Lambda(M_{a}f_{1},M_{b}f_{2},M_{-a-b}f_{3}),
\end{equation}
where $M_{a}f(x):=e^{2\pi i ax}f(x)$.

Using the equivalent form \eqref{smoothing2}, with $\widehat{f}_{j}$ supported where $|\xi_{j(\operatorname{mod}2)+1}| \sim \lambda$ for at least one index $j=1,2$. Then by taking $a$ or $b$ large enough such that the frequency support of $M_{a}f_{1}$ or $M_{b}f_{2}$ is away from $|\xi_{j(\operatorname{mod}2)+1}| \sim \lambda$, the the right-hand side of \eqref{smoothingimpossible} vanishes.
\begin{align*}
    |\Lambda(f_{1},f_{2},f_{3})|=|\Lambda(M_{a}f_{1},M_{b}f_{2},M_{-a-b}f_{3})|\lesssim \lambda^{-\sigma}\|M_{a}f_{1}\|_{L^{2}}\|M_{b}f_{2}\|_{L^{2}}\|M_{-a-b}f_{3}\|_{L^{\infty}}=0.
\end{align*}
We obtain a contradiction. We thus cannot have \eqref{smoothingimpossible}.
\end{remark}

\vspace{.3in}

\begin{theorem}[Theorem 2 in \cite{CHRIST2021107863}]\label{thm_ani_twist_para}
      For $f_{1},f_{2}\in \mathcal{S}(\mathbb{R}^{2})$, we define
\begin{equation}
    \begin{aligned}
        T_{m}(f_{1},f_{2})(x,y)
        := & 
        \int_{\mathbb{R}^{2}}f_{1}(x+s,y)f_{2}(x,y+t)k(s,t)dsdt\\
        = &
        \int_{\R^2}
            m\br{\xi,\eta}
            \widehat{f_1}^{\br{1}}\br{\xi,y}
            \widehat{f_2}^{\br{2}}\br{x,\eta}
            e^{2\pi i\br{x\xi+y\eta}}
        d\xi d\eta,
    \end{aligned}
\end{equation}
where $m=\widehat{k}$ satisfies the following symbol estimate
\begin{equation}
|\partial^{\alpha}_{\xi}\partial^{\beta}_{\eta}m(\xi,\eta)|\lesssim_{\alpha, \beta}(|\xi|^{\frac{1}{k}}+|\eta|^{\frac{1}{l}})^{- k\alpha -l\beta},
\end{equation}
where $\alpha,\beta\geq 0$ up to a large finite number. Then $T$ maps from $L^{p}\times L^{q}$ to $L^{r}$ for $\frac{1}{p}+\frac{1}{q}=\frac{1}{r}$, $p,q\in (1,\infty ),\:r\in (\frac{1}{2},2)$.
\end{theorem}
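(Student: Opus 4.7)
The plan is to decompose $m$ by an anisotropic Littlewood--Paley partition of unity adapted to the scaling in the symbol estimate, bound each single-scale piece by Kovač's twisted paraproduct theorem, and then sum the pieces using an anisotropic analogue of the trilinear smoothing inequality from Theorem~\ref{thm 5 - smoothing}. After dualizing, it suffices to bound the trilinear form
\begin{equation*}
    \Lambda_m(f_1, f_2, f_3) = \int_{\R^4} f_1(x+s,y)\, f_2(x,y+t)\, f_3(x,y)\, k(s,t)\, ds\, dt\, dx\, dy.
\end{equation*}
Setting $\rho(\xi,\eta) := |\xi|^{1/k} + |\eta|^{1/l}$, I would choose a smooth partition $1 = \psi_0(\xi,\eta) + \sum_{j \geq 1} \psi_j(\xi,\eta)$ with $\psi_j$ supported where $\rho \sim 2^j$. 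Writing $m_j := m\psi_j$, the hypothesis on $m$ ensures that each kernel $k_j := (m_j)^{\vee}$ is a Schwartz function localized at anisotropic scale $(2^{-jk}, 2^{-jl})$ with $\|k_j\|_{L^1} \lesssim 1$ uniformly in $j$, so $\Lambda_m = \sum_j \Lambda_{m_j}$.

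For the single-scale bound, the anisotropic rescaling $(s,t) \mapsto (2^{-jk}s, 2^{-jl}t)$ conjugates $T_{m_j}$ to a twisted paraproduct with a Schwartz kernel at unit scale. Kovač's boundedness theorem for the twisted paraproduct, combined with Durcik's refinements, then yields the uniform estimate
\begin{equation*}
    |\Lambda_{m_j}(f_1, f_2, f_3)| \lesssim \|f_1\|_{L^p}\, \|f_2\|_{L^q}\, \|f_3\|_{L^{r'}}, \qquad r \geq 1,\ \tfrac{1}{p} + \tfrac{1}{q} = \tfrac{1}{r}.
\end{equation*}
A direct summation of these bounds produces a logarithmic loss, which is absorbed by real interpolation for $r > 1$ but fails for $r \leq 1$. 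To reach $r \in (1/2, 1]$, I would invoke the anisotropic analogue of Theorem~\ref{thm 5 - smoothing} to obtain
\begin{equation*}
    |\Lambda_{m_j}(f_1, f_2, f_3)| \lesssim 2^{-j\sigma}\, \|f_1\|_{L^2}\, \|f_2\|_{L^2}\, \|f_3\|_{L^\infty}
\end{equation*}
for some $\sigma > 0$, whenever the frequency support of $f_1$ (respectively $f_2$) is essentially at anisotropic scale $2^{jk}$ (respectively $2^{jl}$); this frequency localization is essentially forced by the support of $\psi_j$ after a further standard Littlewood--Paley split. Interpolating this $L^2 \times L^2 \times L^\infty$ smoothing gain against the Kovač single-scale bound produces a geometric factor $2^{-j\sigma'}$ in the trilinear form across the diagonal $r \in (1/2, 2)$, which is summable in $j$. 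The low-frequency piece $\Lambda_{m_0}$ is handled directly by the bilinear Coifman--Meyer theorem since $m_0$ is a bounded smooth symbol.

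The principal obstacle is the anisotropic smoothing inequality. Theorem~\ref{thm 5 - smoothing} is proven for the parabola $t \mapsto t^2$ via a $TT^*$-type argument, iterated Cauchy--Schwarz (to eliminate $f_3$), and a nondegenerate stationary-phase estimate for an oscillatory integral with parabolic phase. For general $(k,l)$ one expects the same scheme to carry through after identifying the appropriate nondegenerate phase of the form $\xi s + \eta s^{l/k}$, but tracking the correct exponent $\sigma$ and verifying that it survives the subsequent real interpolation down to $r$ arbitrarily close to $1/2$ is delicate; a secondary technical point is that the anisotropic pieces split into two regimes depending on whether $|\xi|^{1/k}$ or $|\eta|^{1/l}$ dominates, and each regime requires its own smoothing estimate summed symmetrically. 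This verification is where I expect most of the work to reside.
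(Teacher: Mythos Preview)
The study guide does not actually prove this theorem; it is quoted as Theorem~2 of \cite{CHRIST2021107863} and invoked as a black box to dispose of the low- and mixed-frequency pieces $T^L$, $T^M$ in Section~4.4. So there is no ``paper's own proof'' here to compare against.

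That said, your proposal has a genuine gap. You want to sum the anisotropic Littlewood--Paley pieces $\Lambda_{m_j}$ by appealing to an ``anisotropic analogue of Theorem~\ref{thm 5 - smoothing},'' and you speculate that the relevant phase is $\xi s+\eta s^{l/k}$. But the operator $T_m$ carries \emph{no curvature}: in $f_1(x+s,y)\,f_2(x,y+t)\,k(s,t)$ the variables $s$ and $t$ are independent, so on the Fourier side one simply sees the symbol $m(\xi,\eta)$ with no oscillatory coupling between $\xi$ and $\eta$. The decay in Theorem~\ref{thm 5 - smoothing} comes precisely from stationary phase in $\int e^{2\pi i(\xi t+\eta t^2)}\psi(t)\,dt$, and that mechanism is absent for the twisted paraproduct. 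There is no phase of the form $\xi s+\eta s^{l/k}$ anywhere in the problem, so the $2^{-j\sigma}$ gain you rely on does not exist.

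The actual proof in \cite{CHRIST2021107863} (extending Kova\v{c} \cite{kovavc2012boundedness} and Durcik \cite{durcik20144,durcik2017}) uses no oscillation at all: it runs the ``twisted technology,'' i.e.\ a cone decomposition of the symbol, telescoping identities, and control of an entangled quadrilinear form by positivity and iterated Cauchy--Schwarz, with the anisotropy absorbed into the choice of dyadic annuli adapted to $\rho(\xi,\eta)=|\xi|^{1/k}+|\eta|^{1/l}$. Your single-scale invocation of Kova\v{c} is in the right spirit, but the summation over scales has to come from these structural cancellations, not from a smoothing inequality.
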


\vspace{.3in}

\subsection{Acknowledgments} \ 
We would like to thank the organizers of the ``study guide writing workshop 2023'' at University of Pennsylvania, and in particular our mentor Phil Gressman, for suggesting this paper and for organizing a memorable workshop. We would also like to thank the other participants of this workshop for the many inspiring and thought-provoking conversations we shared in Philadelphia. Also, the third author was supported in part by NSF DMS- 2037851.

\vspace{.5in}

\section{Toy model: Hilbert transform along a parabola}

\subsection{Littlewood-Paley decomposition} \

Let $\vp \in C^{\infty}_c(\R)$ be an even function s.t. $\vp(\zeta) \equiv 1$ for $|\zeta| \leq 1$, $\supp \vp \subseteq \{\zeta\in \R; |\zeta| \leq 2\}$, and $0 \leq \vp \leq 1$. Define $\psi(\zeta) = \vp(\zeta) - \vp(2\zeta)$; or equivalently, \(\psi=\vp-\Dil^{\infty}_{1/2}\vp\). As such, $\supp \psi \subseteq \{\zeta \in \R; |\zeta| \sim 1\}$. By construction, we thus have the following partition of unity:
\begin{equation*}
    \1_{\R\setminus\BR{0}}= \sum_{k\in\Z}\psi_k.
\end{equation*}
Consider another test function $\wt{\psi}$ supported on an annulus slightly larger than $\supp \psi$ so that $\psi \wt{\psi} = \psi$. For convenience, we also write:
\begin{equation*}
    \vp_k\br{\zeta}:=\vp\br{\frac{\xi}{2^k}}.
\end{equation*}
Similarly, we denote $\psi_k$ and $\wt{\psi}_k$. We can thus denote two types of Littlewood Paley projections. On one hand, for single-variable functions $f: \R \rightarrow \R$, we define
\begin{align*}
    \wh{\Delta_{k}f}(\xi) := \psi_k (\xi) \wh{f}(\xi).
\end{align*}
On the other hand, for functions with two variables $f: \R^2 \rightarrow \R$, we define
\begin{align*}
    \wh{\Delta^{\br{l}}_{k}f}(\xi_1, \xi_2) := \psi_k (\xi_l) \wh{f}(\xi_1, \xi_2)
\end{align*}

\subsection{Symmetry of the operator} \

Before introducing the triangular Hilbert transform along a parabola, let's first illustrate the main philosophy of the argument through a simpler object: the Hilbert transform along a parabola, denoted by $H_P$ and defined as follows:
\begin{equation}
    H_P f\br{x,y}:=\mathrm{p.v.}\int_{\R} f\br{x+t,y+t^2}\frac{dt}{t}.
\end{equation}
One can notice some similarities between the Hilbert transform along a parabola $H_P$ and the triangular Hilbert transform along a parabola $T$. Indeed consider the two operators given side by side in the kernel form and in the multiplier form: 
\begin{equation*}
    \left\{
    \begin{aligned}
        H_P f\br{x,y}
        = &
        \mathrm{p.v.}\int_{\R}f\br{x+t,y+t^2}\frac{dt}{t}\\
        = &
        \int_{\R^2}
             \mathrm{p.v.}\int_{\R}
                e^{2\pi i\br{\xi t+\eta t^2}}
            \frac{dt}{t}
            \widehat{f}\br{\xi,\eta}e^{2\pi i\br{\xi x+\eta y}}
        d \xi d\eta\\
        T\br{f_1,f_2}\br{x,y}=&
        \mathrm{p.v.}\int_{\R}f_1\br{x+t,y}f_2\br{x,y+t^2}\frac{dt}{t}\\
        =&
        \int_{\R^2}
             \mathrm{p.v.}\int_{\R}
                e^{2\pi i\br{\xi t+\eta t^2}}
            \frac{dt}{t}
            \widehat{f_1}^{\br{1}}\br{\xi,y}\widehat{f_2}^{\br{2}}\br{x,\eta}e^{2\pi i\br{\xi x+\eta y}}
        d \xi d\eta.
    \end{aligned}
    \right.
\end{equation*}

After dualizing $H_P$, we observe more similarities with $T$:
\begin{equation}
    \begin{aligned}
        \ang{H_P f_1,f_2}
        = &
        \int_{\R} 
            \int_{\R^2} 
                f_1\br{x+t, y}f_2\br{x,y-t^2} 
            dxdy
        \frac{dt}{t}\\
        \approx &
        \int_{\R^2}
            T\br{f_1,f_2}\br{x,y}
        dxdy
        =\ang{T\br{f_1,f_2},1}.
    \end{aligned}
\end{equation}
However, the analysis of $T$ is in fact much harder than that of $H_P$. For example, when evaluating the $L^1$ norm of the triangular Hilbert transform along a parabola, we observe that 
\begin{equation*}
    \Verts{T\br{f_1,f_2}}_{L^1}
    =\sup_{\Verts{g}_{L^\infty}=1} \verts{\ang{T\br{f_1,f_2},g}} \geq \verts{\ang{T\br{f_1,f_2},1}}=\verts{\ang{H_P f_1,f_2}}.
\end{equation*}
Despite the differences between \(H_P\) and \(T\), the reduction principle demonstrated below is standard. It applies to both \(H_P\), \(T\) and more generally to operators with curved phase structures in general. As such, we chose to include it in our study guide as a toy model for the more involved proof of the $L^p$ boundedness of the triangular Hilbert transform along a parabola (see section 4 for the proof of the latter). 

• \textbf{Characteristic symmetries} :

To highlight the characteristic symmetries satisfied by the Hilbert transform along a parabola, first define the following operators:
\begin{align*}
    \mathrm{Tr}_{x_0,y_0} f(x,y) &= f(x-x_0,y-y_0),\\
    \mathrm{Mod}_{\xi_0,\eta_0} f(x,y) &= e^{2\pi i\br{\xi_0 x+\eta_0 y}}f(x,y),\\
    \mathrm{Dil}^{p}_{a,b} f(x,y) &= \br{ab}^{-1/p}f\br{\frac{x}{a},\frac{y}{b}}.
\end{align*}

Central to the proof below, are the following symmetries of the Hilbert transform along a parabola:

        \noindent(1) \textbf{Translation symmetry:}
        \begin{equation}
            \Tr_{x_0,y_0}H_P=H_P\Tr_{x_0,y_0}.
        \end{equation}
        (2) \textbf{Anisotropic dilation symmetry:}
        \begin{equation}
            \Dil^p_{s,s^2}H_P=H_P\Dil^p_{s,s^2}.
        \end{equation}
    
The \textit{anisotropic} dilation symmetry is an indicator of the presence of \textit{curvature}. One notable fact is the absence of modulation symmetry. This indicates that the operator reacts differently to high-frequency inputs versus low-frequency inputs.

\subsection{Decompositions} \ 

• \textbf{Treatment of the singularity:} We perform a decomposition on the singular kernel that respects the dilation symmetry structure
\begin{equation*}
        \frac{1}{t}=\sum_{j\in\Z} \frac{\psi\br{2^j t}}{t}
\end{equation*}
    and thus, we obtain the following natural decomposition: 
    \begin{equation*}
        H_P:=\sum_{j\in\Z}H_j,
    \end{equation*}
where
    \begin{equation*}
        H_jf\br{x,y}:=
        \int_{\R}
            f\br{
                x+t,
                y+t^2
            }
            \frac{\psi\br{2^jt}}{t}
        dt.
    \end{equation*}
Note that the dilation symmetry is encoded within the following relation: 
    \begin{equation*}
        H_j =\Dil^p_{2^{-j},2^{-2j}} H_0 \Dil^p_{2^j,2^{2j}}.
    \end{equation*}
Alternatively, we may phrase it in terms of multipliers:
    \begin{equation}\label{eq_multiplier_1st_appearence}
        \widehat{H_j f}\br{\xi,\eta}
        =m
        % \br{\xi,\eta}
        \br{\frac{\xi}{2^j},\frac{\eta}{2^{2j}}}
        \widehat{f}\br{\xi,\eta}\text{ with }
        m\br{\xi,\eta}:=
        \int_{\R}
            e^{2\pi i\br{\xi t+\eta t^2}}
            \frac{\psi\br{t}}{t}
        dt,
    \end{equation}
    This also suggests that, more often than not, the analysis of \(H_j\) can be reduced to the analysis on the unit scale operator \(H_0\). Namely, \(H_0\) is an anchor point if we perform our analysis respecting the dilation symmetry.

    • \textbf{Multiplier behaviors:} For a brief moment, we focus on the unit scale part \(H_0\). Since the behavior of \(m\) relates directly to the boundedness of \(H_0\) and in turn that of \(H_P\), we dedicate this paragraph to the analysis of \(m\). The goal is to obtain something better than just the trivial bound:
    \begin{equation}\label{eq_partial_m_trivial_est}
        \verts{\partial^\alpha_\xi\partial^\beta_\eta m\br{\xi,\eta}}
        \leq
        \int_{\R}
            \verts{
                e^{2\pi i\br{\xi t+\eta t^2}}
                \underset{
                =:\psi_{\alpha,\beta}\br{t}
                }{
                    \underline{\underline{
                        \br{2\pi i}^{\alpha+\beta}t^{\alpha+2\beta-1}\psi\br{t}
                    }}
                }
            }
        dt
        =
        \Verts{
            \psi_{\alpha,\beta}
        }_{L^1}
        \underset{\alpha,\beta}{\sim}
        \Verts{
            \psi
        }_{L^1}
        \sim
        1.
    \end{equation}
    By summing over all scales \(j\), we will recover the boundedness of \(H_P\).

        -- \textbf{Non-oscillatory part:} \(\verts{\xi}\vee\verts{\eta} \lesssim 1\). Heuristically, the lack of oscillation,
        \begin{equation*}
            e^{2\pi i\br{\xi t+\eta t^2}}\approx e^{0}=1
        \end{equation*}
        and the fact that \(\frac{\psi\br{t}}{t}\) is odd (has mean zero) together suggest that
        \begin{equation*}
            m\br{\xi,\eta}\approx
            \int_{\R}
                1\cdot\frac{\psi\br{t}}{t}
            dt=0.
        \end{equation*}
        Formally, we can measure the error
        \begin{equation}\label{eq non oscillatory}
            \begin{aligned}
                \verts{m\br{\xi,\eta}}=& \verts{m\br{\xi,\eta}-0}
                =
                \verts{
                    \int_{\R} 
                        \br{e^{2\pi i \br{\xi t+\eta t^2}}-1}
                        \frac{\psi\br{t}}{t}
                    dt
                }\\
                \leq & 
                \int_{\R} 
                    \verts{e^{2\pi i \br{\xi t+\eta t^2}}-1}
                    \frac{\verts{\psi\br{t}}}{\verts{t}}
                dt
                \lesssim 
                \int_{\verts{t}\sim 1}
                    \verts{\xi t+\eta t^2}
                    \frac{\verts{\psi\br{t}}}{\verts{t}}
                dt\\
                \lesssim & \verts{\xi}+\verts{\eta} \sim \verts{\xi}\vee\verts{\eta}
            \end{aligned}
        \end{equation}
        -- \textbf{Non-stationary part:} \(\verts{\xi}\vee\verts{\eta}\gg \verts{\xi}\wedge\verts{\eta} \gg 1\). 
        % Via solving the critical point of the phase function
        % \begin{equation}
        %     0=\left.\frac{d}{dt}\right\vert_{t=t_0}\br{\xi t+\eta t^2}=\xi+2\eta t_0 \iff t_0=\frac{-\xi}{2\eta},
        % \end{equation}
        The drastic size difference between \(\xi\) and \(\eta\) forces the derivative of the phase function to be large. Indeed,
        % \begin{equation}
        %     \text{either }\verts{t_0}\ll 1\text{ or }\verts{t_0}\gg 1
        % \end{equation}
        % and thus, \(t_0\) is located away from the support of \(\frac{\psi\br{t}}{t}\). Namely, the oscillatory term \(\)
        % More quantitatively speaking,
        \begin{equation*}
            \verts{t}\sim 1
            \implies
            \verts{\frac{d}{dt}\br{\xi t+\eta t^2}}=
            \verts{\xi+2\eta t}\sim \verts{\xi}\vee\verts{\eta}\gg 1.
        \end{equation*}
        The non-stationary phase principle suggests an arbitrary fast order of decay:
        \begin{equation}\label{eq non-stationary}
            \verts{\partial^\alpha_\xi\partial^\beta_\eta m\br{\xi,\eta}}\underset{\alpha,\beta, N}{\lesssim}\br{\verts{\xi}\vee\verts{\eta}}^{-N} \ll 1.
        \end{equation}
        To verify the above statement, we first observe that
        \begin{equation*}
            e^{2\pi i\br{\xi t+\eta t^2}}
            =\br{2\pi i }^{-N}\br{\frac{1}{\xi+2\eta t} \cdot
            \frac{d}{dt}}^N e^{2\pi i\br{\xi t+\eta t^2}}.
        \end{equation*}
        By integration by parts\footnote{Recalling the earlier the notation $\psi_{\ap, \beta}(t) = (2\pi i )^{\ap+ \beta} t^{\ap + 2\beta -1} \psi(t)$.},
        \begin{equation*}
            \begin{aligned}
                \partial^\alpha_\xi\partial^\beta_\eta m\br{\xi,\eta}
                = & \int_{\R}
                    e^{2\pi i\br{\xi t+\eta t^2}}
                    % \underset{
                    % =:\psi_{\alpha,\beta}\br{t}
                    % }{
                    %     \underline{\underline{
                    %         \br{-2\pi i}^{\alpha+\beta}t^{\alpha+2\beta-1}\psi\br{t}
                    %     }}
                    % }
                    \psi_{\alpha,\beta}\br{t}
                dt\\
                =  \br{2\pi i}^{-N}
                    &
                \int_{\R} 
                    \psi_{\alpha,\beta}\br{t}
                    \br{\frac{1}{\xi+2\eta t} \cdot
                    \frac{d}{dt}}^N 
                    e^{2\pi i\br{\xi t+\eta t^2}}
                dt\\
                =   \br{-2\pi i}^{-N}
                    &
                \int_{\R}
                    e^{2\pi i\br{\xi t+\eta t^2}}
                    \br{
                        \frac{d}{dt}\frac{1}{\xi+2\eta t} 
                    }^N
                    \psi_{\alpha,\beta}\br{t}
                dt.
            \end{aligned}
        \end{equation*}
        The differential operator
        \begin{equation*}
            \br{
                \frac{d}{dt}\frac{1}{\xi+2\eta t}
            }^N
        \end{equation*}
        can in fact be decomposed into a linear combination of operators of the form:
        \begin{equation*}
            \frac{\eta^j}{\br{\xi+2\eta t}^{N+j}}\cdot\frac{d^{N-j}}{dt^{N-j}}.
        \end{equation*}
        for $j \leq N$. We will prove it by induction: 
        \begin{itemize}
            \item The base case \(N=0\) holds trivially.
            \item Assume the statement is true for \(N=k\). The operator
            \begin{equation*}
                \br{
                    \frac{d}{dt}\frac{1}{\xi+2\eta t}
                }
                \br{
                    \frac{d}{dt}\frac{1}{\xi+2\eta t} 
                }^{k}
            \end{equation*}
            can thus be decomposed into a linear combination of the operators of the form:
            \begin{equation*}
                \begin{aligned}
                    &\br{
                        \frac{d}{dt}\frac{1}{\xi+2\eta t} 
                        \cdot
                    }
                    \frac{\eta^j}{\br{\xi+2\eta t}^{k+j}}\cdot\frac{d^{k-j}}{dt^{k-j}}
                \end{aligned}
            \end{equation*}
            where $j \leq k$. We can rewrite this expression as
            \begin{equation*}
                \begin{aligned}
                    \frac{d}{dt}
                    \br{
                        \frac{\eta^j}{\br{\xi+2\eta t}^{k+1+j}}\cdot\frac{d^{k-j}}{dt^{k-j}}
                    }
                    =& -2\br{k+j+1}\cdot
                    \frac{\eta^{j+1}}{\br{\xi+2\eta t}^{k+1+j+1}}\cdot\frac{d^{k-j}}{dt^{k-j}}\\
                    +&
                    \frac{\eta^j}{\br{\xi+2\eta t}^{k+1+j}}\cdot\frac{d^{k+1-j}}{dt^{k+1-j}}.
                \end{aligned}
            \end{equation*}
        This finishes the proof of the claim.
        \end{itemize}
        As a direct consequence, we have the following estimate:
        \begin{equation*}
        \verts{
            \frac{\eta^j}{\br{\xi+2\eta t}^{N+j}}\cdot\frac{d^{N-j}}{dt^{N-j}}\psi_{\alpha,\beta}
        }
        \underset{\alpha,\beta,N}{
        \lesssim}
        \frac{\verts{\eta}^j}{\br{\verts{\xi}\vee\verts{\eta}}^{N+j}}\cdot\Verts{\psi}_{C^N}
        \underset{N}
        {\lesssim}
        \br{\verts{\xi}\vee\verts{\eta}}^{-N}.
        \end{equation*}
        Finally, we conclude that
        \begin{equation*}
            \begin{aligned}
                \verts{\partial^\alpha_\xi\partial^\beta_\eta m\br{\xi,\eta}}
                \underset{N}{\lesssim}
                \int_{\verts{t}\sim 1}
                    \verts{
                        \br{
                            \frac{d}{dt}\frac{1}{\xi+2\eta t} 
                            \cdot
                        }^N
                        \psi_{\alpha,\beta}\br{t}
                    }
                dt
                \underset{\alpha,\beta,N}{\lesssim} & \br{\verts{\xi}\vee\verts{\eta}}^{-N}.
            \end{aligned}
        \end{equation*}
        -- \textbf{Stationary part:} \(\verts{\xi}\sim
        \verts{\eta} \gg 1\). A simple calculation shows that
        \begin{equation}
            0=\left.\frac{d}{dt}\right\vert_{t=t_0}\br{\xi t+\eta t^2}=\xi+2\eta t_0\iff t_0=\frac{-\xi}{2\eta}
        \end{equation}
        where by our assumption, \(\verts{t_0}\sim 1\). Roughly speaking, the critical point \(t_0\) lies within a neighborhood of \( \supp \frac{\psi\br{t}}{t}\). Usually, we  resort to
        the stationary phase principle:
        \begin{equation}
            m\br{\xi,\eta}\approx e^{2\pi i\br{\frac{\sgn\br{\eta}}{8}-\frac{\xi^2}{4\eta}}}\frac{\psi\br{\frac{-\xi}{2\eta}}}{\verts{\eta}^{1/2}\cdot \frac{-\xi}{2\eta}}.
        \end{equation}
        Yet, we don't actually need the full strength of the stationary phase formula, but rather, a simple application of the van der Corput lemma to obtain a control on the amplitude:
        \begin{equation}\label{eq van der corput's}
            \verts{m\br{\xi,\eta}}
            \lesssim 
            \verts{
                \frac{\partial^2}{\partial t^2} 
                2\pi i\br{\xi t +\eta t^2}
            }^{-\frac{1}{2}}
            \eqsim
            \verts{\eta}^{-\frac{1}{2}}.
        \end{equation}
        As a direct consequence, we have the following form of smoothing inequality:
        \begin{lemma}[A cheap smoothing inequality for \(H_0\)]\label{lemma cheap smoothing}
            Given a function \(f\in L^2\br{\R^2}\) with \(\widehat{f}\) supported on the region \(\left\{\br{\xi,\eta}\in\R^2\middle\vert  \  \lambda\sim\verts{\xi}\vee\verts{\eta}\right\}\), we have the following estimate: 
            \begin{equation*}
                \Verts{H_0 f}_{L^2_{\rm loc}}\lesssim \lambda^{-1/2}\Verts{f}_{L^2}.
            \end{equation*}
        In other words, we have:
            \begin{equation*}
                \Verts{H_0 f}_{L^2_{\rm loc}}\lesssim \Verts{f}_{H^{\br{-1/2,0}}}\wedge\Verts{f}_{H^{\br{0,-1/2}}}.
            \end{equation*}
        \end{lemma}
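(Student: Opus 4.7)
The plan is to reduce the $L^2_{\rm loc}$ estimate on $H_0 f$ to a pointwise estimate on the Fourier multiplier $m$ via Plancherel, and then assemble the three regime estimates already obtained in the preceding paragraphs.

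By \eqref{eq_multiplier_1st_appearence}, we have $\widehat{H_0 f}\br{\xi,\eta} = m\br{\xi,\eta}\widehat{f}\br{\xi,\eta}$. Since $\Verts{H_0 f}_{L^2_{\rm loc}} \leq \Verts{H_0 f}_{L^2}$, Plancherel's theorem reduces the claim to establishing the pointwise bound $\verts{m\br{\xi,\eta}} \lesssim \lambda^{-1/2}$ on the frequency support of $\widehat{f}$, namely $\BR{\br{\xi,\eta}\in\R^2 : \verts{\xi}\vee\verts{\eta}\sim\lambda}$.

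I then proceed by case analysis. If $\lambda \lesssim 1$, then $\lambda^{-1/2}\gtrsim 1$, and the trivial bound $\verts{m}\lesssim 1$ from \eqref{eq_partial_m_trivial_est} (taking $\alpha=\beta=0$) already suffices. If $\lambda\gg 1$, I split the frequency support into a stationary subregion $\verts{\xi}\sim\verts{\eta}\sim\lambda$ and a non-stationary subregion where $\verts{\xi}\vee\verts{\eta}\sim\lambda$ but $\verts{\xi}\wedge\verts{\eta}\ll\lambda$. On the stationary subregion, the van der Corput estimate \eqref{eq van der corput's} yields $\verts{m\br{\xi,\eta}} \lesssim \verts{\eta}^{-1/2}\sim\lambda^{-1/2}$. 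On the non-stationary subregion, the integration-by-parts argument producing \eqref{eq non-stationary} (with $\alpha=\beta=0$) gives the much stronger bound $\verts{m}\lesssim \lambda^{-N}$ for any fixed $N$, since $\verts{\xi + 2\eta t}\gtrsim \lambda$ persists on $\verts{t}\sim 1$.

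Combining the cases, $\verts{m\br{\xi,\eta}}^2 \lesssim \lambda^{-1}$ on the frequency support, and Plancherel's theorem yields
\begin{equation*}
    \Verts{H_0 f}_{L^2_{\rm loc}}^2
    \leq \int_{\R^2}\verts{m\br{\xi,\eta}}^2\verts{\widehat{f}\br{\xi,\eta}}^2 d\xi d\eta
    \lesssim \lambda^{-1}\Verts{f}_{L^2}^2.
\end{equation*}
The equivalence with the second form of the lemma follows from the observation that under the frequency support hypothesis, at least one of the weights $\br{1+\verts{\xi}^2}^{-1/4}$ or $\br{1+\verts{\eta}^2}^{-1/4}$ is of size $\lambda^{-1/2}$, so taking the $\wedge$ of the two anisotropic Sobolev norms recovers $\lambda^{-1/2}\Verts{f}_{L^2}$. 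There is no serious obstacle here; the only minor point requiring attention is that \eqref{eq non-stationary} was stated for $\verts{\xi}\wedge\verts{\eta}\gg 1$, whereas I also need the regime $\verts{\xi}\vee\verts{\eta}\sim\lambda\gg 1$ with $\verts{\xi}\wedge\verts{\eta}\lesssim 1$; the same integration-by-parts goes through verbatim because the key lower bound $\verts{\xi + 2\eta t}\gtrsim\lambda$ on $\verts{t}\sim 1$ remains valid in that regime as well.
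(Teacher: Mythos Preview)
Your proposal is correct and follows exactly the approach the paper intends: the paper presents the lemma immediately after the van der Corput estimate \eqref{eq van der corput's} with the phrase ``As a direct consequence,'' and your argument simply makes explicit the Plancherel reduction together with the case split over the non-oscillatory, non-stationary, and stationary regimes. Your remark about extending \eqref{eq non-stationary} to the regime $\verts{\xi}\wedge\verts{\eta}\lesssim 1$ is a fair point and is handled correctly; one small cosmetic improvement for the ``in other words'' part is to note directly that the combined multiplier bounds give $\verts{m(\xi,\eta)}\lesssim \langle\xi\rangle^{-1/2}$ and $\verts{m(\xi,\eta)}\lesssim \langle\eta\rangle^{-1/2}$ pointwise, from which both Sobolev-norm bounds follow by Plancherel without any frequency-support hypothesis.
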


\vspace{.2in}

    • \textbf{Littlewood-Paley decomposition:}
    Consider the Littlewood-Paley projection of \(H_0\):
    % \begin{equation}
    %     H_0=\sum_{k\in\Z^2}H_0 \br{\Delta_{k_1}\otimes\Delta_{k_2}}.
    % \end{equation}
    % Yet, this is not the optimal way to label the indices. Indeed, if we label the unit scale case in the most natural way
    \begin{equation*}
        H_0=\sum_{k\in\Z^2}H_0 \br{\Delta_{k_1}\otimes\Delta_{k_2}}.
    \end{equation*}
    Using the particular dilation symmetry of $H_0$, we obtain an adapted Littlewood-Paley decomposition of \(H_j\): 
    \begin{equation*}
        \begin{aligned}
            H_j= & \Dil^p_{2^{-j},2^{-2j}} H_0 \Dil^p_{2^j,2^{2j}}\\
            = & \sum_{k\in\Z^2} \Dil^p_{2^{-j},2^{-2j}} H_0 \br{\Delta_{k_1}\otimes\Delta_{k_2}} \Dil^p_{2^j,2^{2j}}\\
            = & \sum_{k\in\Z^2} \Dil^p_{2^{-j},2^{-2j}} H_0  \Dil^p_{2^j,2^{2j}}
            \br{\Delta_{j+k_1}\otimes\Delta_{2j+k_2}}\\
            = & \sum_{k\in\Z^2} H_j
            \br{\Delta_{j+k_1}\otimes\Delta_{2j+k_2}}.
        \end{aligned}
    \end{equation*}
In this way, \(k\in\Z^2\) reflects the relative frequency scale size:
    \begin{equation*}
        \begin{aligned}
            \mathcal{F}
            \br{
                H_j\br{
                    \Delta_{k_1+j}\otimes\Delta_{k_2+2j}
                }f
            }
            \br{\xi,\eta}
            = &
            % m\br{2^{-j}\xi,2^{-2j}\eta}
            % \psi\br{\frac{2^{-j}\xi}{2^{k_1}}}
            % \psi\br{\frac{2^{-2j}\eta}{2^{k_2}}}
            % \widehat{f}\br{\xi,\eta}\\
            % = &
             m\br{2^{-j}\xi,2^{-2j}\eta}
            \psi_{k_1}\br{2^{-j}\xi}
            \psi_{k_2}\br{2^{-2j}\eta}
            \widehat{f}\br{\xi,\eta}.
        \end{aligned}
    \end{equation*}
    Altogether, we obtain the following decomposition of $H_P$:
    \begin{equation}
        H_P=
        \sum_{j\in\Z}
            \sum_{
                k\in\Z^2
            }
                H_j\br{\Delta_{j+k_1}\otimes\Delta_{2j+k_2}}.
    \end{equation}
    We can now utilize the information about the phase behavior we've acquired so far to consider three separate cases:
    \begin{equation}
        H_P=H^L+H^M+H^H
    \end{equation}
    where
    \begin{equation}
        \left\{
            \begin{aligned}
                H^L = &
                \sum_{j\in\Z}
                    \sum_{\substack{
                        k\in\Z^2\\
                        : k_1\vee k_2\leq 0
                    }}
                        H_j\br{\Delta_{j+k_1}\otimes\Delta_{2j+k_2}}
                \\
                H^M = &
                \sum_{j\in\Z}
                    \sum_{\substack{
                        k\in\Z^2\\
                        : k_1\vee k_2>0\\
                        \verts{k_1-k_2}\geq 100
                    }}
                        H_j\br{\Delta_{j+k_1}\otimes\Delta_{2j+k_2}}\\
                H^H = &
                \sum_{j\in\Z}
                    \sum_{\substack{
                        k\in\Z^2\\
                        : k_1\vee k_2>0\\
                        \verts{k_1-k_2}< 100
                    }}
                        H_j\br{\Delta_{j+k_1}\otimes\Delta_{2j+k_2}}.
            \end{aligned}
        \right.
    \end{equation}
    % \MH{
    % Or the version similar to the paper formulation below:
    % \begin{equation}
    %     H_\omega:=
    %     \sum_{\substack{
    %         j\in\Z\\
    %         k\in\fF_\omega
    %     }}
    %         H_j\br{\Delta_{k_1+j}\otimes\Delta_{k_2+2j}}\text{ for }\omega=L,M,H
    % \end{equation}
    % and the set \(\fF_\omega\)s partition \(k=\br{k_1,k_2}\in\Z^2\) with each set defined below:
    % \begin{equation}
    %     \left\{
    %     \begin{aligned}
    %         \fF_L := &\left\{k \in \Z^2\middle\vert \max(k_1, k_2) \leq 0 \right\} \\
    %         \fF_M := &\left\{k \in \Z^2\middle\vert \max(k_1, k_2)>0, |k_1 - k_2| \geq 100 \right\} \\
    %         \fF_H := &\left\{k \in \Z^2\middle\vert \max(k_1, k_2)>0, |k_1 - k_2| < 100 \right\}.
    %     \end{aligned}
    %     \right.
    % \end{equation}
    % }
    This rearrangement separates different phase behaviors of the multiplier corresponding to \(H_j\br{\Delta_{j+k_1}\otimes\Delta_{2j+k_2}}\).

\subsection{A cheap $L^2$ bound} \ 

We now demonstrate a cheap bound through direct calculation of the \(L^\infty\) norm of the multiplier restricted to the three cases listed above. 

\subsubsection{Low and mixed frequencies} \

-- \(H^L\) \textbf{pieces:} We may rewrite the multiplier of \(H^L\) as:
        \begin{equation}
            \begin{aligned}
                m_L\br{\xi,\eta}
                :=&
                \sum_{j\in\Z}
                    \sum_{\substack{
                        k\in\Z^2\\
                        :k_1\vee k_2\leq 0
                    }}
                        m\br{2^{-j}\xi,2^{-2j}\eta}
                        \psi_{k_1}\br{
                            2^{-j}\xi
                        }
                        \psi_{k_2}\br{
                            2^{-2j}\eta
                        }\\
                = &
                \sum_{
                    j\in\Z
                }
                    m\br{2^{-j}\xi,2^{-2j}\eta}
                    \phi\br{
                        2^{-j}\xi
                    }
                    \phi\br{
                        2^{-2j}\eta
                    }.
            \end{aligned}
        \end{equation}
        By construction, \(m_L\) satisfies anisotropic dilation symmetry:
        \begin{equation}
            m_L\br{2^{-j_0}\xi,2^{-2j_0}\eta}=m_L\br{\xi,\eta}.
        \end{equation}
        After rescaling, we can thus focus on the analysis of \(m_L\) on the frequency region \(\verts{\xi}\vee\verts{\eta} \sim  1\) and apply the estimate corresponding to the \textbf{non-oscillatory phase} behavior (see \eqref{eq non oscillatory}) and obtain:
        \begin{equation}\label{eq mL bdd}
            \begin{aligned}
                \verts{m_L\br{\xi,\eta}}
                \leq &
                \sum_{j\in\Z}
                    \verts{
                        m\br{2^{-j}\xi,2^{-2j}\eta}
                        \phi\br{2^{-j}\xi}
                        \phi\br{2^{-2j}\eta}
                    }\\
                = &
                \sum_{j \geq 0}
                    \verts{
                        m\br{2^{-j}\xi,2^{-2j}\eta}
                        \phi\br{2^{-j}\xi}
                        \phi\br{2^{-2j}\eta}
                    }\\
                \lesssim &
                \sum_{j \geq 0}
                    \br{
                        \verts{2^{-j}\xi}\vee
                        \verts{2^{-2j}\eta}
                    }
                \lesssim
                \sum_{j \geq 0}
                    2^{-j}
                \sim 1.
            \end{aligned}
        \end{equation}
        This proves the \(L^2\) boundedness of \(H^L\).

         -- \(H^M\) \textbf{pieces:} With the same trick mentioned above, we rewrite the multiplier of \(H^M\) into the following two pieces:
        \begin{equation}
            \begin{aligned}
                m_M\br{\xi,\eta}:= &
                \sum_{j\in\Z}
                    \sum_{\substack{
                        k\in\Z^2\\
                        :k_1\vee k_2>0\\
                        \verts{k_1-k_2}\geq 100
                    }}
                        m\br{2^{-j}\xi,2^{-2j}\eta}
                        \psi_{k_1}\br{
                            2^{-j}\xi
                        }
                        \psi_{k_2}\br{
                            2^{-2j}\eta
                        }\\
                = &
                \sum_{j\in\Z}
                    \sum_{k_1\in\N}
                        m\br{2^{-j}\xi,2^{-2j}\eta}
                        \psi_{k_1}\br{
                            2^{-j}\xi
                        }
                        \phi_{k_1-100}\br{
                            2^{-2j}\eta
                        }\\
                + &
                \sum_{j\in\Z}
                    \sum_{k_2\in\N}
                        m\br{2^{-j}\xi,2^{-2j}\eta}
                        \phi_{k_2-100}\br{
                            2^{-j}\xi
                        }
                        \psi_{k_2}\br{
                            2^{-2j}\eta
                        }.
            \end{aligned}
        \end{equation}
        % Again, by construction, \(m_M\) satisfies anisotropic dilation symmetry:
        % \begin{equation}
        %     m_M\br{2^{-j_0}\xi,2^{-2j_0}\eta}=m_M\br{\xi,\eta}.
        % \end{equation}
        % We thus, through rescaling, only need to focus on the analysis of \(m_M\) on the frequency region \(\frac{1}{2}<\verts{\xi}\vee\verts{\eta}\leq 1\). 
        The estimate corresponding to the \textbf{non-stationary phase behavior} (see \eqref{eq non-stationary}) yields
        \begin{equation}
            \begin{aligned}
                \verts{m_M\br{\xi,\eta}}
                \leq &
                \sum_{j\in\Z}
                    \sum_{k_1\in\N}
                        \verts{
                            m\br{2^{-j}\xi,2^{-2j}\eta}
                            \psi_{k_1}\br{
                                2^{-j}\xi
                            }
                            \phi_{k_1-100}\br{
                                2^{-2j}\eta
                            }
                        }\\
                + &
                \sum_{j\in\Z}
                    \sum_{k_2\in\N}
                        \verts{
                            m\br{2^{-j}\xi,2^{-2j}\eta}
                            \phi_{k_2-100}\br{
                                2^{-j}\xi
                            }
                            \psi_{k_2}\br{
                                2^{-2j}\eta
                            }
                        }\\
                \underset{N}{\lesssim} &
                \sum_{k_1\in\N}
                    2^{-N k_1}
                    \sum_{j\in\Z}
                        \verts{
                            \psi_{k_1}\br{
                                2^{-j}\xi
                            }
                            % \phi_{k_1-100}\br{
                            %     2^{-2j}\eta
                            % }
                        }\\
                + &
                \sum_{k_2\in\N}
                    2^{-N k_2}
                    \sum_{j\in\Z}
                        \verts{
                            % \phi_{k_2-100}\br{
                            %     2^{-j}\xi
                            % }
                            \psi_{k_2}\br{
                                2^{-2j}\eta
                            }
                        }
                \lesssim 
                \sum_{
                    k\in\N
                }
                    2^{-N k}
                \lesssim 1
            \end{aligned}
        \end{equation}
        This proves the \(L^2\) boundedness of \(H^M\).

\subsubsection{High frequencies} \

         -- \(H^H\) \textbf{pieces:} The multiplier of \(H^H\) takes the following form:
        \begin{equation}
            \begin{aligned}
                m_H\br{\xi,\eta}:= &
                \sum_{j\in\Z}
                    \sum_{\substack{
                        k\in\Z^2\\
                        :k_1\vee k_2>0\\
                        \verts{k_1-k_2}< 100
                    }}
                        m\br{2^{-j}\xi,2^{-2j}\eta}
                        \psi_{k_1}\br{2^{-j}\xi}
                        \psi_{k_2}\br{2^{-2j}\eta}\\
                = &
                \sum_{\substack{
                    k\in\Z^2\\
                    :k_1\vee k_2>0\\
                    \verts{k_1-k_2}<100
                }}
                    m_k\br{\xi,\eta},
            \end{aligned}
        \end{equation}
        where
        \begin{equation}
            m_k\br{\xi,\eta}
            :=
            \sum_{
                j\in\Z
            }
                m\br{2^{-j}\xi,2^{-2j}\eta}
                \psi_{k_1}\br{2^{-j}\xi}
                \psi_{k_2}\br{2^{-2j}\eta}.
        \end{equation}
        By the \textbf{amplitude} information of the \textbf{stationary phase} (see \eqref{eq van der corput's}), we have:
        \begin{equation}
            \verts{m_k\br{\xi,\eta}}
            \leq
            2^{-k_1/2}
            \sum_{j\in\Z}
                \psi_{k_1}\br{2^{-j}\xi}
                \psi_{k_2}\br{2^{-2j}\eta}
            \lesssim 2^{-k_1/2}
        \end{equation}
        Recalling that $|k_1 - k_2| <100$, we sum over \(k_1 \in \Z\) we obtain the \(L^2\) boundedness of \(H^H\).

\subsection{$L^p$ bound} \ 

Here is a sketch of our strategy to prove \(L^p\) boundedness: 
    \begin{equation*}
        H_P \leadsto
        \left\{
            \begin{aligned}
                H^L:& & \overset{\textbf{Non-Oscillatory}}{\Longrightarrow}  &\text{ Less singular Calderón-Zygmund Operator}\\
                H^M:& & \overset{\textbf{Non-Stationary}}{\Longrightarrow}    &\text{ Less singular Calderón-Zygmund Operator}\\
                H^H:& & \overset{\textbf{\phantom{No}Stationary\phantom{n-}}}{\Longrightarrow}   &\text{ Smoothing Inequality}
            \end{aligned}
        \right.
    \end{equation*}

\subsubsection{Low and mixed frequencies} \

        -- \(H^L\) \textbf{case:} Recall that \(m_L\) satisfies an anisotropic dilation symmetry:
        \begin{equation}
            m_L\br{2^{-j_0}\xi,2^{-2j_0}\eta}=m_L\br{\xi,\eta}.
        \end{equation}
        After rescaling, we thus only need to focus on the analysis of \(m_L\) on the frequency region \(\verts{\xi}\vee\verts{\eta} \sim 1\). To verify the Mikhlin multiplier criteria, 
        \begin{equation}
            \begin{aligned}
                \verts{
                    \partial^\alpha_\xi
                    \partial^\beta_\eta
                    m_L\br{\xi,\eta}
                }
                \leq &
                \sum_{j\in\Z}
                    \verts{
                        \partial^\alpha_\xi
                        \partial^\beta_\eta
                        \br{
                            m\br{2^{-j}\xi,2^{-2j}\eta}
                            \phi\br{2^{-j}\xi}
                            \phi\br{2^{-2j}\eta}
                        }
                    }\\
                = &
                \sum_{j \geq 0}
                    \verts{
                        \partial^\alpha_\xi
                        \partial^\beta_\eta
                        \br{
                            m\br{2^{-j}\xi,2^{-2j}\eta}
                            \phi\br{2^{-j}\xi}
                            \phi\br{2^{-2j}\eta}
                        }
                    }\\
                \underset{\alpha,\beta}{\lesssim} &
                \sum_{j \geq 0}
                    2^{-j\br{\alpha+2\beta}}
                \sim 1\textrm{ whenever }\alpha+2\beta>0.
            \end{aligned}
        \end{equation}
        Combined with the result \(\verts{m_L}\lesssim 1\) we derived in \eqref{eq mL bdd}, a rescaling yields the \textbf{anisotropic Mikhlin multiplier criteria}:
        \begin{equation}\label{eq_mL_sym_est}
            \verts{
            \partial^\alpha_\xi
            \partial^\beta_\eta
            m_L\br{\xi,\eta}
            }\underset{\alpha,\beta}{\lesssim}
            \br{\verts{\xi}+\verts{\eta}^{\frac{1}{2}}}^{-\alpha-2\beta}
        \end{equation}
        We thus have the \(L^p\) boundedness of \(H^L\) for all \(1<p<\infty\).
        
        -- \(H^M\) \textbf{pieces:} We again only need to verify the \textbf{anisotropic Mikhlin multiplier criteria} for \(\partial^\alpha_\xi\partial^\beta_\eta m_M\) on the frequency region \( \verts{\xi}\vee\verts{\eta} 
        \sim 1\). The \textbf{non-stationary phase} estimate \eqref{eq non-stationary} and the chain rule yield:
        \begin{equation}\label{eq_mM_sym_est}
            \begin{aligned}
                \verts{
                    \partial^\alpha_\xi
                    \partial^\beta_\eta
                    m_M\br{\xi,\eta}
                }
                \leq &
                \sum_{j\in\Z}
                    \sum_{k_1\in\N}
                        \verts{
                            \partial^\alpha_\xi
                            \partial^\beta_\eta
                            \br{
                                m\br{2^{-j}\xi,2^{-2j}\eta}
                                \psi_{k_1}\br{
                                    2^{-j}\xi
                                }
                                \phi_{k_1-100}\br{
                                    2^{-2j}\eta
                                }
                            }
                        }\\
                + &
                \sum_{j\in\Z}
                    \sum_{k_2\in\N}
                        \verts{
                            \partial^\alpha_\xi
                            \partial^\beta_\eta
                            \br{
                                m\br{2^{-j}\xi,2^{-2j}\eta}
                                \phi_{k_2-100}\br{
                                    2^{-j}\xi
                                }
                                \psi_{k_2}\br{
                                    2^{-2j}\eta
                                }
                            }
                        }\\
                \underset{\alpha,\beta,N,M}{\lesssim} &
                \sum_{\substack{
                    j,k_1\in\Z\\
                    :k_1>0
                }}
                    2^{-\br{N-\alpha-\beta} k_1-\alpha\br{j+k_1}-\beta\br{2j+k_1-100}}
                    \1_{\verts{\xi}\sim 2^{j+k_1}}
                    \1_{\verts{\eta}\lesssim 2^{2j+k_1-100}}
                    \\
                + &
                \sum_{\substack{
                    j,k_2\in\Z\\
                    :k_2>0
                }}
                    2^{-\br{M-\alpha-\beta} k_2-\alpha\br{j+k_2-100}-\beta\br{2j+k_2}}
                    \1_{\verts{\xi}\lesssim 2^{j+k_2-100}}
                    \1_{\verts{\eta}\sim 2^{2j+k_2}}\\
                \underset{\alpha,\beta}{\lesssim} &
                \sum_{\substack{
                    j,k_1\in\Z\\
                    :k_1>0
                }}
                    2^{-\br{N-\alpha-2\beta} k_1}\verts{\xi}^{-\alpha-2\beta}
                    \1_{\verts{\xi}\sim 2^{j+k_1}}
                    \1_{\verts{\eta}\ll\verts{\xi}^2}
                    \\
                + &
                \sum_{\substack{
                    j,k_2\in\Z\\
                    :k_2>0
                }}
                    2^{-\br{M-\frac{\alpha}{2}-\beta} k_2}\verts{\eta}^{-\frac{\alpha}{2}-\beta}
                    \1_{2^{-k_2} \ll \frac{\verts{\eta}}{\verts{\xi}^2}}
                    \1_{\verts{\eta}\sim 2^{2j+k_2}}\\
                \underset{\alpha,\beta}{\lesssim} &
                1
                + 
                \br{1 \wedge \frac{\verts{\eta}}{\verts{\xi}^2}}^{M-\frac{\alpha}{2}-\beta}\verts{\eta}^{-\frac{\alpha}{2}-\beta}
                \lesssim 1\textrm{ whenever }M,N> \alpha+2\beta>0
            \end{aligned}
        \end{equation}
        We thus have the \(L^p\) boundedness of \(H^M\) for all \(1<p<\infty\).

\subsubsection{High frequency} \

        -- \(H^H\) \textbf{pieces:} Recall the following decomposition:
        \begin{equation}
            m_H\br{\xi,\eta}= 
            \sum_{\substack{
                k\in\Z^2\\
                :k_1\vee k_2>0\\
                \verts{k_1-k_2}<100
            }}
                m_k\br{\xi,\eta},
        \end{equation}
        where \(m_k\br{\xi,\eta}\) corresponds to the multiplier for the operator $H^{(k)}$:
        \begin{equation}
            H^{\br{k}}:=
            \sum_{j\in\Z}
                H_j\circ\br{\Delta_{j+k_1}\otimes \Delta_{2j+k_2}}.
        \end{equation}
        We've already established an \(L^2\) bound with exponential decay from the previous discussion (see \eqref{eq van der corput's}):
        \begin{equation}
            \Verts{
                H^{\br{k}} f
            }_{L^2}
            \lesssim \Verts{m_k}_{L^\infty}
            \Verts{f}_{L^2}
            \sim
            2^{-|k|/2}
            \Verts{f}_{L^2}.
        \end{equation}
        We aim to obtain the analogous statement for the \(L^p\) bound:
        \begin{equation}
            \Verts{
                H^{\br{k}} f
            }_{L^p}
            \underset{p}{\lesssim} 2^{-\epsilon_p \verts{k}}
            \Verts{f}_{L^p}
        \end{equation}
        and sum over \(k\) to recover the \(L^p\) boundedness of \(H^H\). Yet, thanks to the decay at \(p=2\), we can be much more lenient with the estimate through interpolation. More specifically, the following bound with polynomial growth would suffice for our purpose:
        \begin{equation}\label{eq_H_k_Lp_poly_growth_est}
            \Verts{H^{\br{k}} f}_{L^p}
            \underset{p,N}{\lesssim} \verts{k}^N
            \Verts{f}_{L^p}.
        \end{equation}
        %%%%%%%%%%%% MODIFED 8/16
        To prove the above estimate, we first look at the individual pieces and recall that:
        \begin{equation}\label{eq_dil_rel_HH_one_piece}
            H_j\circ\br{\Delta_{j+k_1}\otimes \Delta_{2j+k_2}}=\Dil^p_{2^{-j},2^{-2j}}\circ H_0\circ\br{\Delta_{k_1}\otimes \Delta_{k_2}}\circ \Dil^p_{2^j,2^{2j}}.
        \end{equation}
        We now focus on the estimate on the central part \(H_0\circ \br{\Delta_{k_1}\otimes \Delta_{k_2}}\). On the physical side, we have:
        \begin{equation}\label{eq_H_0kk_naive_est}
            \verts{H_0\br{\Delta_{k_1}\otimes \Delta_{k_2}}f}
            \leq 
            \verts{f} \ast \verts{K_k},
        \end{equation}
        where \(\widehat{K_k}(\xi, \eta ):=m\br{\xi,\eta}\psi_{k_1}\br{\xi}\psi_{k_2}\br{\eta}\). 
        It remains to estimate the kernel \(K_k\). Explicit calculation shows that:
        \begin{equation*}
            \begin{aligned}
                K_k\br{x,y}
                =&
                \int_{\R}
                    \int_{\R^2}
                        e^{
                            2\pi i
                            \br{
                                \xi\br{x+t}+
                                \eta\br{y+t^2}
                            }
                        }
                        \psi_{k_1}\br{\xi}
                        \psi_{k_2}\br{\eta}
                    d\xi d\eta
                    \frac{\psi\br{t}}{t}
                dt\\
                = &
                \int_{\R}
                    \br{
                        \Dil^1_{2^{-k_1}}
                        \widecheck{\psi}
                    }\br{x+t}
                    \br{
                        \Dil^1_{2^{-k_2}}
                        \widecheck{\psi}
                    }\br{y+t^2}
                    \frac{\psi\br{t}}{t}
                dt\\
                = &
                \int_{\R}
                    \br{
                        \Tr_{-t}
                        \Dil^1_{2^{-k_1}}
                        \widecheck{\psi}
                    }\br{x}
                    \br{
                        \Tr_{-t^2}
                        \Dil^1_{2^{-k_2}}
                        \widecheck{\psi}
                    }\br{y}
                    \frac{\psi\br{t}}{t}
                dt\\
                = &
                \int_{\R}
                    \br{
                        \Dil^1_{2^{-k_1}}
                        \Tr_{-2^{k_1} t}
                        \widecheck{\psi}
                    }\br{x}
                    \br{
                        \Dil^1_{2^{-k_2}}
                        \Tr_{-2^{k_2}t^2}
                        \widecheck{\psi}
                    }\br{y}
                    \frac{\psi\br{t}}{t}
                dt.
            \end{aligned}
        \end{equation*}
        By dominating \(\verts{\widecheck{\psi}(x)} \underset{N}{\lesssim} \la x \ra^{-N} \), we can estimate the expression directly:
        \begin{equation*}
                \verts{
                    K_k\br{x,y}
                }
                \underset{N}{\lesssim} 
                \Verts{
                    \br{
                        \Dil^1_{2^{-k_1}}
                        \Tr_{-2^{k_1} t}
                        \la \cdot \ra^{-N}
                    }\br{x}
                    \br{
                        \Dil^1_{2^{-k_2}}
                        \Tr_{-2^{k_2}t^2}
                        \la \cdot \ra^{-N}
                    }\br{y}
                }_{L^1\br{\verts{t}\sim 1}}.
        \end{equation*}
        We then introduce the following naive but useful discretization:
        \begin{lemma}\label{lem_omega_discr}
            \begin{equation*}
                \la x \ra^{-1} \sim \sum_{z\in\Z} \la z \ra^{-1} \1_{\Br{0,1}}(x-z).
            \end{equation*} 
        \end{lemma}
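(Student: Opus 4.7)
The argument is essentially a two-line observation: the right-hand side is a coarse discretization of $\la x\ra^{-1}$ by unit intervals, and the weight $\la \cdot \ra$ is comparable on such intervals. I would execute this in two steps.

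First, I would fix $x\in\R$ and count how many terms in the sum are nonzero. The indicator $\1_{[0,1]}(x-z)$ is nonzero precisely when $z \in [x-1,x]$, an interval of length one, which contains either one or two integers. Hence the sum on the right collapses to at most two nonzero terms, each of the form $\la z \ra^{-1}$ with $|z - x| \leq 1$.

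Second, I would verify that $\la z\ra \sim \la x\ra$ whenever $|z-x|\leq 1$. This is immediate from the triangle inequality: $|z| \leq |x|+1$ and $|x|\leq |z|+1$, so
\begin{equation*}
    1 + |z|^2 \sim 1 + |x|^2,
\end{equation*}
i.e., $\la z\ra \sim \la x\ra$. Combining this with the first step, the sum is pinched between a constant multiple and a fraction of $\la x\ra^{-1}$, giving the desired equivalence.

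There is essentially no obstacle: the only content is the elementary comparability of $\la\cdot\ra$ on unit intervals, which is why this lemma is labeled ``naive but useful''. Its usefulness will come from converting integrals of rapidly-decaying Schwartz tails against $\frac{\psi(t)}{t}\,dt$ into discrete sums over $z \in \Z$, which will later be amenable to reindexing and Fubini-type interchanges in the analysis of $K_k$.
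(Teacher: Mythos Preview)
Your proof is correct and is exactly the natural elementary argument; the paper in fact states this lemma without proof (calling it ``naive but useful''), so there is nothing further to compare.
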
 
        This allows us to rewrite the estimate:
        \begin{equation}\label{eq_kernel_to_shifted_I}
            \begin{aligned}
                \verts{
                    K_k\br{x,y}
                }
                \underset{N}{\lesssim} &
                \Bigg\Vert 
                    \br{
                        \Dil^1_{2^{-k_1}}
                        \Tr_{-2^{k_1} t}
                        \sum_{z_1\in\Z} \la z_1 \ra^{-N}
                        \Tr_{z_1}\1_{\Br{0,1}}
                    }\br{x}\\
                    \cdot & \phantom{\big\Vert}
                    \br{
                        \Dil^1_{2^{-k_2}}
                        \Tr_{-2^{k_2}t^2}
                        \sum_{z_2\in\Z} \la z_2 \ra^{-N}
                        \Tr_{z_2}\1_{\Br{0,1}}
                    }\br{y}
                \Bigg\Vert_{L^1\br{\verts{t}\sim 1}}\\
                = &
                \Big\Vert
                    \big\Vert 
                    \la z_1 \ra^{-N}
                    \br{
                            \Dil^1_{2^{-k_1}}\Tr_{z_1-2^{k_1} t}\1_{\Br{0,1}}
                    }\br{x}
                        \\
                &\phantom{\big\vert}
                    \cdot
                    \la z_2 \ra^{-N}
                    \br{
                        \Dil^1_{2^{-k_2}}\Tr_{z_2-2^{k_2} t^2}\1_{\Br{0,1}}
                    }\br{y}
                    \big\Vert_{\ell^1\br{z\in\Z^2}}
                \Big\Vert_{L^1\br{\verts{t}\sim 1}}.
            \end{aligned}
        \end{equation}
        \begin{remark}
            By the above estimate, the kernel \(K_k\) can thus be decomposed into \textbf{weighted summands and averages} of characteristic functions of \textbf{shifted intervals} with \textbf{some dilation factors.} 
        \end{remark}
        To control \(\verts{f}\ast\verts{K_k}\) with the estimate \eqref{eq_kernel_to_shifted_I}, it becomes natural to introduce the \textbf{shifted dyadic maximal function} \(\mathcal{M}_\sigma\)
            \footnote{By Stein's book \cite{Stein1993HABook} (p.78),
            \begin{align*}
                \norm{\M_{\sg}g}{L^p} \lesssim \log(2+ |\sg|)^{1/p} \norm{g}{L^p}, \hspace{.3in} 1 < p \leq \infty
            \end{align*}}
        :
        \begin{equation}
            \begin{aligned}
                \mathcal{M}_{\sigma}g\br{x}
                := &
                \sup_{k\in\Z}
                    2^{-k}
                    \int_{
                        \Br{
                            \sigma 2^k,
                            \br{\sigma+1}2^k
                        }
                    }
                        \verts{g\br{x-t}}
                    dt\\
                = & 
                \sup_{k\in\Z}
                    \verts{g}\ast
                    \Dil^1_{2^k}\Tr_\sigma\1_{\Br{0,1}}\br{x}
                \text{, where }\sigma \in \R.
            \end{aligned}
        \end{equation}
        By definition,
        \begin{equation*}
            \begin{aligned}
                &
                \verts{H_0\br{\Delta_{k_1}\otimes \Delta_{k_2}}f}
                \leq  \verts{f}\ast\verts{K_k}\\
                \underset{N}{\lesssim } &
                \Verts{
                    \Verts{
                        \ang{z_1}^{-N}
                        \ang{z_2}^{-N}
                        \mathcal{M}^{\br{2}}_{z_2-2^{k_2}t^2}
                        \mathcal{M}^{\br{1}}_{z_1-2^{k_1}t} f
                    }_{\ell^1\br{z\in\Z^2}}
                }_{L^1\br{\verts{t}\sim 1}}.
            \end{aligned}
        \end{equation*}
        In addition, since \(\mathcal{M}_\sigma\) commutes with dilation, the relation \eqref{eq_dil_rel_HH_one_piece} gives a pointwise bound uniform in \(j\in\Z\):
        \begin{equation}\label{eq_HHj_max_est}
            \begin{aligned}
                \verts{H_j\br{\Delta_{j+k_1}\otimes \Delta_{2j+k_2}}f}
                \underset{N}{\lesssim }
                \Verts{
                    \Verts{
                        \ang{z_1}^{-N}
                        \ang{z_2}^{-N}
                        \mathcal{M}^{\br{2}}_{z_2-2^{k_2}t^2}\mathcal{M}^{\br{1}}_{z_1-2^{k_1}t} f
                    }_{\ell^1\br{z\in\Z^2}}
                }_{L^1\br{\verts{t}\sim 1}}.
            \end{aligned}
        \end{equation}
        We can improve this bound. By dualizing the expression:
        \begin{multline}
                \ang{H_j\br{\Delta_{j+k_1}\otimes \Delta_{2j+k_2}}f,g}\\
                = 
                \int_{\R^2}
                    m\br{2^{-j}\xi,2^{-2j}\eta}
                        \psi_{k_1}\br{2^{-j}\xi}
                        \psi_{k_2}\br{2^{-2j}\eta}
                        \widehat{f}\br{\xi,\eta}
                        \overline{
                            \widehat{g}\br{\xi,\eta}
                        }
                d\xi d\eta
        \end{multline}
        and making use of the fact that \(\psi=\psi\cdot\widetilde{\psi}\), we can rewrite the above expression as: 
        \begin{equation}
            \begin{aligned}
                &
                \ang{H_j\br{\Delta_{j+k_1}\otimes \Delta_{2j+k_2}}f,g}\\
                = &
                \int_{\R^2}
                    m\br{2^{-j}\xi,2^{-2j}\eta}
                        \widetilde{\psi}_{k_1}\br{2^{-j}\xi}
                        \widetilde{\psi}_{k_2}\br{2^{-2j}\eta}\\
                        &\phantom{\int_{\R}}\cdot 
                        \psi_{k_1}\br{2^{-j}\xi}
                        \psi_{k_2}\br{2^{-2j}\eta}
                        \widehat{f}\br{\xi,\eta}
                        \overline{
                            \widetilde{\psi}_{k_1}\br{2^{-j}\xi}
                            \widetilde{\psi}_{k_2}\br{2^{-2j}\eta}
                            \widehat{g}\br{\xi,\eta}
                        }
                d\xi d\eta\\
                = &
                \ang{
                    H_j\br{\widetilde{\Delta}_{j+k_1}\otimes \widetilde{\Delta}_{2j+k_2}}f_{j,k},
                    g_{j,k}
                },
            \end{aligned}
        \end{equation}
        where for simplicity, we adopt the following shorthand:
        \begin{equation*}
        \left\{
            \begin{aligned}
                f_{j,k}:=&\br{\Delta_{j+k_1}\otimes \Delta_{2j+k_2}}f\\
                g_{j,k}:=&\br{\widetilde{\Delta}_{j+k_1}\otimes \widetilde{\Delta}_{2j+k_2}}g
            \end{aligned}
        \right.
        \end{equation*}
        After applying the estimate \eqref{eq_HHj_max_est} and summing over \(j\in\Z\), we have
        \begin{equation}
                \verts{\ang{H^{\br{k}} f,g}}
                \underset{N}{\lesssim} 
                \Verts{
                    \Verts{
                        \ang{z_1}^{-N}
                        \ang{z_2}^{-N}
                        \mathcal{B}_{t,z,k}\br{f,g}
                    }_{\ell^1\br{z\in\Z^2}}
                }_{L^1\br{\verts{t}\sim 1}}
        \end{equation}
        where the bilinear form is defined as below:
        \begin{equation}
                \mathcal{B}_{t,z,k}\br{f,g}
                := 
                \sum_{j\in\Z}
                    \ang{
                        \mathcal{M}^{\br{2}}_{z_2-2^{k_2}t^2}
                        \mathcal{M}^{\br{1}}_{z_1-2^{k_1} t} 
                        f_{j,k},
                        \verts{
                        g_{j,k}
                        }
                    }.
        \end{equation}
        For now, we focus on the bilinear form with fixed \(z\in\Z^2\) and \(\verts{t}\sim 1\). Applying Cauchy-Schwartz inequality to \(\sum_{j\in\Z}\) and H\"{o}lder's inequality to the inner product, we get the following:
        \begin{equation}
            \begin{aligned}
                \mathcal{B}_{t,z,k}\br{f,g}
                \leq & 
                \ang{
                    \Verts{
                        \mathcal{M}^{\br{2}}_{z_2-2^{k_2}t^2}\mathcal{M}^{\br{1}}_{z_1-2^{k_1} t} 
                        f_{j,k}
                    }_{\ell^2\br{j}},
                    \Verts{
                        g_{j,k}
                    }_{\ell^2\br{j}}
                }\\
                \leq &
                \Verts{
                    \Verts{
                        \mathcal{M}^{\br{2}}_{z_2-2^{k_2}t^2}\mathcal{M}^{\br{1}}_{z_1-2^{k_1} t} 
                        f_{j,k}
                    }_{\ell^2\br{j}}
                }_{L^p}
                \cdot
                \Verts{
                    \Verts{
                        g_{j,k}
                    }_{\ell^2\br{j}}
                }_{L^q}
            \end{aligned}
        \end{equation}
        We then apply the Fefferman-Stein inequality
            \footnote{\textbf{Fefferman-Stein inequality }- For $1<p<\infty$, the Hardy-Littlewood maximal function $\mathcal{M}$ satisfies the vector-valued inequalities 
            \begin{equation}
                \norm{\lp \sum_{j \in \Z} |\mathcal{M}(f_j)|^2 \rp^{1/2}}{p} \leq C_n (p + \frac{1}{p-1}) \norm{\lp \sum_{j} |f_j|^2 \rp^{1/2}}{L^p}
            \end{equation}}
        established in \cite{GHLR17}:
        \begin{equation}\label{eq_shifted_Fefferman-Stein_ineq}
            \Verts{\Verts{\M_{\sigma} g_j}_{\ell^2\br{j}} }_{L^p} \underset{p}{\lesssim} \log (2 + |\sigma|)^2 \Verts{\Verts{g_j}_{\ell^2\br{j}} }_{L^p}\text{, for }1<p<\infty
        \end{equation}
        twice and finish off with the standard square function estimate to obtain:
        \begin{equation}
            \begin{aligned}
                &
                \mathcal{B}_{t,z,k}\br{f,g}\\
                \underset{p}{\lesssim} & 
                \log\br{2+\verts{z_2-2^{k_2}t^2}}^2
                \log\br{2+\verts{z_1-2^{k_1} t}}^2
                \cdot 
                \Verts{
                    \Verts{
                        f_{j,k}
                    }_{\ell^2\br{j}}
                }_{L^p}
                \cdot
                \Verts{
                    \Verts{
                        g_{j,k}
                    }_{\ell^2\br{j}}
                }_{L^q}\\
                \underset{p}{\lesssim} & 
                \log\br{2+\verts{z_2-2^{k_2}t^2}}^2
                \log\br{2+\verts{z_1-2^{k_1} t}}^2
                \Verts{f}_{L^p}\Verts{g}_{L^q}.
            \end{aligned}
        \end{equation}
        Finally, we have:
        \begin{equation}
            \begin{aligned}
                \verts{
                    \ang{
                        H^{\br{k}} f,
                        g
                    }
                }
                \underset{N,p}{\lesssim}&
                \Big\Vert\phantom{\cdot}
                    \sum_{z_1\in\Z}
                        \ang{z_1}^{-N}
                        \log\br{2+\verts{z_1-2^{k_1} t}}^2
                        \\
                &
                \phantom{\Big\Vert}
                    \cdot\sum_{z_2\in\Z}
                        \ang{z_2}^{-N}
                        \log\br{2+\verts{z_2-2^{k_2}t^2}}^2
                \Big\Vert_{L^1\br{\verts{t}\sim 1}}
                \Verts{f}_{L^p}\Verts{g}_{L^q}\\
                \lesssim &
                \Verts{
                    \log\br{2+\verts{2^{k_1} t}}^2
                    \log\br{2+\verts{2^{k_2} t^2}}^2
                }_{L^1\br{\verts{t}\sim 1}}
                \Verts{f}_{L^p}\Verts{g}_{L^q}\\
                \lesssim &
                \verts{k}^4 \Verts{f}_{L^p}\Verts{g}_{L^q}.
            \end{aligned}
        \end{equation}
        In other words, we've proved \eqref{eq_H_k_Lp_poly_growth_est} with \(N=4\)
        and thus, finish the proof of the \(L^p\) boundedness of \(H^H\).

\newpage

\section{A Variant of the Triangular Hilbert Transform}

We now apply what we've learned from the toy model \(H_P\) to the analysis of the variant of the triangular Hilbert transform defined earlier: 
\begin{equation*}
    T(f_{1},f_{2})(x,y):=\operatorname{p.v.}\int_{\mathbb{R}}f_{1}(x+t,y)f_{2}(x,y+t^{2})\frac{dt}{t}.
\end{equation*}

In this section, we will demonstrate how the boundedness of $T$ reduces to the smoothing inequality and an anisotropic variant of the twisted paraproduct.

To start off the proof of Theorem \ref{thm 1 - THT Lp}, we shall follow the same procedures as that in the analysis of the toy model:
    \subsection{Symmetry of the operator}
    Similar to what we have in the case of \(H_P\), we have the following relations:
    \begin{itemize}
        \item \textbf{Translation symmetry:}
        \begin{equation*}
            \Tr_{x_0,y_0}T\br{f_1,f_2}=T\br{\Tr_{x_0,y_0} f_1,\Tr_{x_0,y_0} f_2}
        \end{equation*}
        \item \textbf{Anisotropic dilation symmetry:}
        \begin{equation*}
            \Dil^r_{2^j,2^{2j}}T(f_{1},f_{2})
            =
            T\br{
                \Dil^p_{2^{j},2^{2j}}f_{1},
                \Dil^q_{2^{j},2^{2j}}f_{2}
            }
            \text{ whenever }\frac{1}{r}=\frac{1}{p}+\frac{1}{q}
        \end{equation*}
        \item \textbf{Trivial Modulation symmetry:}
        \begin{equation*}
            \Mod_{\xi_0,\eta_0}T\br{f_1,f_2}=T\br{\Mod_{0,\eta_0}f_1,\Mod_{\xi_0,0}f_2}.
        \end{equation*}
    \end{itemize}
    Notice \textbf{the absence} of the following form of modulation symmetry:
    \begin{equation*}
        \Mod_{?,?}T\br{f_1,f_2} \overset{?}{=} T\br{\Mod_{\xi_0,0}f_1,\Mod_{0,\eta_0}f_2}.
    \end{equation*}
    This means there could potentially be some biases on the frequency side of those particular arguments. Therefore, we expect the analysis on high-frequency input \(f_1,f_2\) to differ from that on low-frequency input.
    
    \subsection{Decompositions} \ 
    
    We then perform the same dyadic scale decomposition on the kernel
    \(
        \frac{1}{t}=\sum_{j\in\Z} \frac{\psi\br{2^j t}}{t}
    \)
    to respect the \(\Dil\)-symmetry structure of \(T\) and derive:
    \begin{align*}
        T = \sum_{j \in \Z} T_j\text{,\hspace{1.5ex}where\hspace{1.5ex}}T_j(f_1, f_2)(x, y)= \int_{\R} f_1(x+t, y) f_2(x, y+t^2) \frac{\psi(2^j t)}{t} dt.
    \end{align*}
    After a change of variables in $t$, we have
    \begin{equation}
        \begin{aligned}
            T_{j}(f_{1},f_{2})(x,y)
            =&
            \int_{\mathbb{R}}f_{1}(x+t,y)f_{2}(x,y+t^{2})\frac{\psi(2^{j}t)}{t}dt\\
            = & 
            \int_{\mathbb{R}}f_{1}(x+2^{-j}t,y)f_{2}(x,y+2^{-2j}t^{2})\frac{\psi(t)}{t}dt\\
            = &
            \int_{\R^2}
                m\br{2^{-j}\xi,2^{-2j}\eta}
                \widehat{f_1}^{\br{1}}\br{\xi,y}
                \widehat{f_2}^{\br{2}}\br{x,\eta}
                e^{2\pi i \br{x\xi+y\eta}}
            d\xi d\eta,
        \end{aligned}
    \end{equation}
    where we recall the definition of \(m\):
    \begin{equation}\label{eq_multiplier_2nd_appearence}
        m\br{\xi,\eta}
        :=
        \int_{\R}
            e^{2\pi i\br{\xi t+ \eta t^2}}
            \frac{\psi\br{t}}{t}
        dt.
    \end{equation}
    Thus, the anisotropic dilation symmetry takes the following form for the components \(T_j\)s :
    \begin{equation*}
        T_{j}(f_{1},f_{2})=\Dil^r_{2^{-j},2^{-2j}}T_{0}(\Dil^p_{2^{j},2^{2j}}f_{1},\Dil^q_{2^{j},2^{2j}}f_{2})\text{ whenever }\frac{1}{r}=\frac{1}{p}+\frac{1}{q}.
    \end{equation*}
    % where $D^{p}_{a,b}f(x,y):=\br{ab}^{1/p}f(\frac{x}{a},\frac{y}{b})$
    % and $D_{a,b}:=D^{\infty ,\infty}_{a,b}$
    % . 
    Again, this dilation symmetry allows us to pass the analysis of \(T_j\) to the analysis on the unit scale \(T_{0}\).
    We point out that \(T_0\) and \(H_0\) share the same multiplier. The key difference lies in the fact that for a bilinear operator like \(T_0\), the \(L^\infty\) boundedness of the multiplier \(m\br{\xi,\eta}\) alone is not strong enough to imply even the local-\(L^2\) boundedness of a bilinear operator. 
    This immediately highlights one of the technicalities.
    % : \textbf{the treatment of the stationary phase behavior \(\verts{\xi}\sim\verts{\eta}\gg 1\)}
    % \bullet \textbf{Littlewood-Paley decomposition:}
    Next step, we compose \(T_0\) with Littlewood-Paley projections:
    \begin{equation}
        T_0\br{f_1,f_2}=
        \sum_{\substack{
            k\in\Z^2
        }}
            T_0\br{
                \Delta^{\br{1}}_{k_1}f_1,
                \Delta^{\br{2}}_{k_2}f_2
            }
    \end{equation}
    and examine how it interacts with the dilation symmetry above-mentioned:
    \begin{equation}
        \begin{aligned}
            T_j\br{f_1,f_2}
            = &
            \Dil^r_{2^{-j},2^{-2j}}
            \sum_{
                k\in\Z^2
            }
                T_0\br{
                    \Delta^{\br{1}}_{k_1}\Dil^p_{2^j,2^{2j}}f_1,
                    \Delta^{\br{2}}_{k_2}\Dil^q_{2^j,2^{2j}}f_2
                }\\
            = &
            \sum_{
                k\in\Z^2
            }
                \Dil^r_{2^{-j},2^{-2j}}
                T_0\br{
                    \Dil^p_{2^j,2^{2j}}\Delta^{\br{1}}_{j+k_1}f_1,
                    \Dil^q_{2^j,2^{2j}}\Delta^{\br{2}}_{2j+k_2}f_2
                }\\
            = &
            \sum_{
                k\in\Z^2
            }
                T_j\br{
                    \Delta^{\br{1}}_{j+k_1}f_1,
                    \Delta^{\br{2}}_{2j+k_2}f_2
                }.
        \end{aligned}
    \end{equation}
    Thus, in total, we have:
    \begin{equation}
        T\br{f_1,f_2}
        =
        \sum_{
            j\in\Z
        }
            \sum_{
                k\in\Z^2
            }
                T_j
                \br{
                    \Delta^{\br{1}}_{j+k_1}f_1,
                    \Delta^{\br{2}}_{2j+k_2}f_2
                }.
    \end{equation}
    The structure of the multiplier \(m\br{\xi,\eta}\) suggests that we split \(T\) into the three components (corresponding to low $L$, mixed $M$, and high $H$ frequency components) to study separately:
    % the operator $T$ restricted to the low $L$, mixed $M$, and high $H$ frequency space: 
    \begin{equation}\label{eq 2.1 T LMH}
        T = T^L + T^M +T^H
    \end{equation}
    where each component is defined below:
    \begin{equation*}
        \left\{
        \begin{aligned}
            T^L = &
            % \sum_{j \in \Z} T^{L}_j = 
            \sum_{j \in \Z}       
                \sum_{\substack{k\in \Z^2\\ 
                :k_1 \vee k_2 \leq 0}} T_j
                \br{
                    \D{1}{j+k_1} \otimes \D{2}{2j+k_2} 
                }\footnote{The localization is well-adapted. Indeed, $\D{2}{2j+k_2} f_2$ implies $|y+t^2| \sim 2^{2j}$, in particular $|t^2|\sim 2^{2j}$ then $|t|\sim 2^j$ so $|x+t|\sim 2^j$ makes sense}\\
            T^M = & 
            % \sum_{j \in \Z} T^{M}_j = 
            \sum_{j \in \Z} 
                \sum_{\substack{
                k\in\Z^2\\
                :k_1 \vee k_2 >0 \\ 
                |k_1 - k_2| \geq 100}} T_j
                \br{
                    \D{1}{j+k_1} \otimes \D{2}{2j+k_2}
                }\\
            T^H = & 
            % \sum_{j \in \Z} T^{H}_j = 
            \sum_{j \in \Z} 
                \sum_{\substack{
                k\in\Z^2\\
                :k_1 \vee k_2 >0 \\ 
                |k_1 - k_2| < 100}} T_j
                \br{
                    \D{1}{j+k_1} \otimes \D{2}{2j+k_2}
                }.
        \end{aligned}
        \right.
        % \begin{cases}
        %     T^L = \sum_{j \in \Z} T^{L}_j = \sum_{j \in \Z} \sum_{\substack{k\in \Z^2\\ k_1 \vee k_2 \leq 0}} T_j(\D{1}{j+k_1} \otimes \D{2}{2j+k_2} )\footnote{The localization is well-adapted. Indeed, $\D{2}{2j+k_2} f_2$ implies $|y+t^2| \sim 2^{2j}$, in particular $|t^2|\sim 2^{2j}$ then $|t|\sim 2^j$ so $|x+t|\sim 2^j$ makes sense}\\
        %     \\
        %     T^M =  \sum_{j \in \Z} T^{M}_j = \sum_{j \in \Z} \sum_{\substack{k_1 \vee k_2 >0 \\ |k_1 - k_2| >100}} T_j(\D{1}{j+k_1} \otimes \D{2}{2j+k_2} )\\
        %     \\
        %     T^H =  \sum_{j \in \Z} T^{H}_j = \sum_{j \in \Z} \sum_{\substack{k_1 \vee k_2 >0 \\ |k_1 - k_2| \leq 100}} T_j(\D{1}{j+k_1} \otimes \D{2}{2j+k_2})
        % \end{cases}
    \end{equation*}
    
    \subsection{A proof sketch}
    Below, we provide a sketch of our strategy for \(L^p\times L^q\to L^r\) bounds with \(\frac{1}{2}<\frac{1}{r}=\frac{1}{p}+\frac{1}{q}\leq 1\):
    \begin{equation}
        T \leadsto
        \left\{
            \begin{aligned}
                T^L:& & \overset{\textbf{Non-Oscillatory}}{\Longrightarrow}  &\text{ Less singular twisted paraproduct}\\
                T^M:& & \overset{\textbf{Non-Stationary}}{\Longrightarrow}    &\text{ Less singular twisted paraproduct}\\
                T^H:& & \overset{\textbf{\phantom{No}Stationary\phantom{n-}}}{\Longrightarrow}   &\text{ Smoothing Inequality}
            \end{aligned}
        \right.
    \end{equation}
    We will focus on the challenging section of the proof: the high-frequency component $T^H$. The central idea here is the smoothing inequality in Theorem \ref{thm 5 - smoothing}. 
    In contrast, the low-frequency component $T^L$ and the mixed-frequency component $T^M$ both reduce to the case of an anisotropic variant of the twisted paraproduct. 
    % \begin{itemize}
        \subsection{Low and mixed-frequency pieces} \ 
        
        A direct calculation shows that:
        \begin{equation*}
            T^L\br{f_1,f_2}\br{x,y}
            =
            \int_{\R^2}
                m_L\br{\xi,\eta}
                \widehat{f_1}^{\br{1}}\br{\xi,y}
                \widehat{f_2}^{\br{2}}\br{x,\eta}
                e^{2\pi i \br{x\xi+y\eta}}
            d\xi d\eta,
        \end{equation*}
        and
        \begin{equation*}
            T^M\br{f_1,f_2}\br{x,y}
            =
            \int_{\R^2}
                m_M\br{\xi,\eta}
                \widehat{f_1}^{\br{1}}\br{\xi,y}
                \widehat{f_2}^{\br{2}}\br{x,\eta}
                e^{2\pi i \br{x\xi+y\eta}}
            d\xi d\eta.
        \end{equation*}
        where we've established the \textbf{anisotropic symbol estimates \eqref{eq_mL_sym_est}:}
        \begin{equation*}
            \verts{\partial^\alpha_\xi\partial^\beta_\eta m_L\br{\xi,\eta}}
            \underset{\alpha,\beta}{\lesssim}
            \br{
                \verts{\xi}+
                \verts{\eta}^{\frac{1}{2}}
            }^{-\alpha-2\beta}.
        \end{equation*}
        and \eqref{eq_mM_sym_est}:
        \begin{equation*}
            \verts{\partial^\alpha_\xi\partial^\beta_\eta m_M\br{\xi,\eta}}
            \underset{\alpha,\beta}{\lesssim}
            \br{
                \verts{\xi}+
                \verts{\eta}^{\frac{1}{2}}
            }^{-\alpha-2\beta},
        \end{equation*}
        Hence, 
        by \textit{Theorem \ref{thm_ani_twist_para}}, $T^L$ and \(T^M\) both extend to bounded operators $L^p \times L^q \to L^r$. 
        
        \subsection{High-frequency pieces} \ 
        
        For the high-frequency pieces, we again regroup the decomposition in the following way:
        \begin{equation*}
            T^H\br{f_1,f_2}=
            \sum_{\substack{
                k\in\Z^2\\
                :k_1 \vee k_2 >0 \\ 
                |k_1 - k_2| < 100
            }} 
                T^{\br{k}}\br{f_1,f_2}
        \end{equation*}
        where we define:
        \begin{equation*}
            T^{\br{k}}\br{f_1,f_2}:=
            \sum_{j\in\Z}
            T_j\br{\Delta^{\br{1}}_{j+k_1}f_1,\Delta^{\br{2}}_{2j+k_2} f_2}.
        \end{equation*}
        The goal is again to obtain bounds of the following form:
        \begin{equation}\label{eq_Tk_dk_p_bound}
            \Verts{T^{\br{k}}\br{f_1,f_2}}_{L^r}\underset{r,p,q}{\lesssim}2^{-\epsilon_r \verts{k}}\Verts{f_1}_{L^p}\Verts{f_2}_{L^q}
        \end{equation}
        Yet, drawing ideas from the discussion on the toy model, we know \eqref{eq_Tk_dk_p_bound} can be achieved by interpolating between the following two types of bounds:
        \begin{equation}\label{eq_Tk_dk_2_dec}
            \Verts{T^{\br{k}}\br{f_1,f_2}}_{L^1}\lesssim 2^{-\sigma \verts{k}} \Verts{f_1}_{L^2}\Verts{f_2}_{L^2}
        \end{equation}
        and
        \begin{equation}\label{eq_Tk_dk_p_grw}
            \Verts{T^{\br{k}}\br{f_1,f_2}}_{L^r}\underset{p,q}{\lesssim} \verts{k}^N \Verts{f_1}_{L^p}\Verts{f_2}_{L^q}.
        \end{equation}
        What's left is to derive \eqref{eq_Tk_dk_2_dec} and \eqref{eq_Tk_dk_p_grw}.
        Starting with \eqref{eq_Tk_dk_p_grw}, one may perform an analysis similar to that of \(H^{\br{k}}\). Via dilation symmetry
        \begin{equation}\label{eq_TH_j_dil_sym}
            T_j\br{\Delta^{\br{1}}_{j+k_1}f_1,\Delta^{\br{2}}_{2j+k_2}f_2}=\Dil^r_{2^{-j},2^{-2j}}T_0\br{\Delta^{\br{1}}_{k_1}\Dil^p_{2^j,2^{2j}}f_1,\Delta^{\br{2}}_{k_2}\Dil^q_{2^j,2^{2j}}f_2},
        \end{equation}
        the discussion reduces to the \(j=0\) case. A direct calculation shows that:
        \begin{equation*}
            T_0
            \br{
                \Delta^{\br{1}}_{k_1}f_1,
                \Delta^{\br{2}}_{k_2}f_2
            }\br{x,y}
            = 
            \int_{\R^2}
                f_1\br{x-s,y}
                f_2\br{x,y-t}
                K_k\br{s,t}
            dsdt,
        \end{equation*}
        where we recall
        \begin{equation*}
            \widehat{K_k}\br{\xi,\eta}
                := 
                m\br{\xi,\eta}
                \psi_{k_1}\br{\xi}
                \psi_{k_2}\br{\eta}.
        \end{equation*}
        At this stage, we can mimic what we've done with the toy model. We put the absolute value inside and estimate directly with the pointwise estimate on the kernel \eqref{eq_kernel_to_shifted_I} to obtain:
        \begin{equation}
            \begin{aligned}
                &
                \verts{T_0
                    \br{
                        \Delta^{\br{1}}_{k_1}f_1,
                        \Delta^{\br{2}}_{k_2}f_2
                    }\br{x,y}
                }\\
                \underset{N}{\lesssim} &
                \Verts{
                    \Verts{
                        \ang{z_1}^{-N}\ang{z_2}^{-N}\cdot
                        \mathcal{M}^{\br{1}}_{z_1-2^{k_1}t}f_1\br{x,y}\cdot
                        \mathcal{M}^{\br{2}}_{z_2-2^{k_2}t^2}f_2\br{x,y}
                    }_{\ell^1\br{z\in\Z^2}}
                }_{L^1\br{\verts{t}\sim 1}}.
            \end{aligned}
        \end{equation}
        Again, since \(\mathcal{M}_\sigma\) commutes with dilation, as an immediate consequence of relation \eqref{eq_TH_j_dil_sym}, we have the following estimate for free:
        \begin{equation}
            \begin{aligned}
                &
                \verts{T_j
                    \br{
                        \Delta^{\br{1}}_{j+k_1}f_1,
                        \Delta^{\br{2}}_{2j+k_2}f_2
                    }\br{x,y}
                }\\
                \underset{N}{\lesssim} &
                \Verts{
                    \Verts{
                        \ang{z_1}^{-N}\ang{z_2}^{-N}\cdot
                        \mathcal{M}^{\br{1}}_{z_1-2^{k_1}t}f_1\br{x,y}\cdot
                        \mathcal{M}^{\br{2}}_{z_2-2^{k_2}t^2}f_2\br{x,y}
                    }_{\ell^1\br{z\in\Z^2}}
                }_{L^1\br{\verts{t}\sim 1}}.
            \end{aligned}
        \end{equation}
        Yet, again, from our previous discussion about the toy model, we can preserve the frequency information by using the identity \(\psi=\psi\cdot\widetilde{\psi}\) and obtain:
        \begin{equation}
            \begin{aligned}
                &
                \verts{T_j
                    \br{
                        \Delta^{\br{1}}_{j+k_1}f_1,
                        \Delta^{\br{2}}_{2j+k_2}f_2
                    }\br{x,y}
                }\\
                \underset{N}{\lesssim} &
                \Verts{
                    \Verts{
                        \ang{z_1}^{-N}\ang{z_2}^{-N}\cdot
                        \mathcal{M}^{\br{1}}_{z_1-2^{k_1}t}
                        f^{\br{1}}_{j,k_1}\br{x,y}\cdot
                        \mathcal{M}^{\br{2}}_{z_2-2^{k_2}t^2}
                        f^{\br{2}}_{j,k_2}\br{x,y}
                    }_{\ell^1\br{z\in\Z^2}}
                }_{L^1\br{\verts{t}\sim 1}},
            \end{aligned}
        \end{equation}
        where we introduce the shorthand:
        \begin{equation*}
            \left\{
            \begin{aligned}
                f^{\br{1}}_{j,k_1}:= & \Delta^{\br{1}}_{j+k_1}f_1\\
                f^{\br{2}}_{j,k_2}:= &
                \Delta^{\br{2}}_{2j+k_2}f_2.
            \end{aligned}
            \right.
        \end{equation*}
        Summing over all the scale \(j\in\Z\), we get the following pointwise bound:
        \begin{equation*}
            \begin{aligned}
                &
                \verts{T^{\br{k}}\br{f_1,f_2}\br{x,y}}\\
                \underset{N}{\lesssim} & 
                \Verts{
                    \Verts{
                        \ang{z_1}^{-N}
                        \ang{z_2}^{-N}
                        \Verts{
                            \mathcal{M}_{z_1-2^{k_1}t}f^{\br{1}}_{j,k_1}\br{x,y}
                            \cdot
                            \mathcal{M}_{z_2-2^{k_2}t^2}f^{\br{2}}_{j,k_2}\br{x,y}
                        }_{\ell^1\br{j\in\Z}}
                    }_{\ell^1\br{z\in\Z^2}}
                }_{L^1\br{\verts{t}\sim 1}}.
            \end{aligned}
        \end{equation*}
        Applying Cauchy-Schwartz on \(\Verts{\cdot}_{\ell^1\br{j\in\Z}}\), we have:
        \begin{equation*}
            \verts{T^{\br{k}}\br{f_1,f_2}\br{x,y}}
            \underset{N}{\lesssim} 
            \Verts{
                \Verts{
                    \ang{z_1}^{-N}
                    \ang{z_2}^{-N}
                    \cdot 
                    \mathcal{S}_{t,z_1,k_1}f_1\br{x,y}
                    \cdot
                    \mathcal{S}_{t,z_2,k_2}f_2\br{x,y}
                }_{\ell^1\br{z\in\Z^2}}
            }_{L^1\br{\verts{t}\sim 1}},
        \end{equation*}
    where the two square functions are defined naturally as below:
    \begin{equation}
        \left\{
        \begin{aligned}
            \mathcal{S}_{t,z_1,k_1}f_1\br{x,y}:= &
            \Verts{
                \mathcal{M}_{z_1-2^{k_1}t}f^{\br{1}}_{j,k_1}\br{x,y}
            }_{\ell^2\br{j\in\Z}}\\
            \mathcal{S}_{t,z_2,k_2}f_2\br{x,y}:= &
            \Verts{
                \mathcal{M}_{z_2-2^{k_2}t^2}f^{\br{2}}_{j,k_2}\br{x,y}
            }_{\ell^2\br{j\in\Z}}.
        \end{aligned}
        \right.
    \end{equation}
    We now evaluate the \(L^r\) norm, use Minkowski's integral inequality to move the \(L^r\) norm to the innermost layer, and apply H\"{o}lder's inequality to get:
    \begin{equation*}
        \begin{aligned}
            \Verts{T^{\br{k}}\br{f_1,f_2}}_{L^r}
            \underset{N}{\lesssim} &
            \Verts{
                \Verts{
                    \ang{z_1}^{-N}
                    \ang{z_2}^{-N}
                    \cdot
                    \Verts{
                        \mathcal{S}_{t,z_1,k_1}f_1\cdot
                        \mathcal{S}_{t,z_2,k_2}f_2
                    }_{L^r}
                }_{\ell^1\br{z\in\Z^2}}
            }_{L^1\br{\verts{t}\sim 1}}\\
            \leq &
            \Verts{
                \Verts{
                    \ang{z_1}^{-N}
                    \ang{z_2}^{-N}
                    \cdot
                    \Verts{
                        \mathcal{S}_{t,z_1,k_1}f_1
                    }_{L^p}
                    \Verts{
                        \mathcal{S}_{t,z_2,k_2}f_2
                    }_{L^q}
                }_{\ell^1\br{z\in\Z^2}}
            }_{L^1\br{\verts{t}\sim 1}}.
        \end{aligned}
    \end{equation*}
    Lastly, we recall that the Fefferman-Stein inequality \eqref{eq_shifted_Fefferman-Stein_ineq} combined with standard square function estimate gives:
    \begin{equation*}
        \left\{
        \begin{aligned}
            \Verts{
                \mathcal{S}_{t,z_1,k_1}f_1
            }_{L^p}
            \underset{p}{\lesssim} & \log\br{2+\verts{z_1-2^{k_1}t}}^2
            \Verts{f_1}_{L^p}\\
            \Verts{
                \mathcal{S}_{t,z_2,k_2}f_2
            }_{L^q}
            \underset{q}{\lesssim} & \log\br{2+\verts{z_2-2^{k_2}t^2}}^2
            \Verts{f_2}_{L^q}
        \end{aligned}
        \right.
    \end{equation*}
    and thus,
    \begin{equation*}
        \begin{aligned}
            \Verts{T^{\br{k}}\br{f_1,f_2}}_{L^r}
            \underset{N,p,q}{\lesssim} &
            \bigg\Vert
                \sum_{z_1\in\Z}
                    \ang{z_1}^{-N}
                    \log\br{2+\verts{z_1-2^{k_1}t}}^2\\
            &\cdot
                \sum_{z_2\in\Z}
                    \ang{z_2}^{-N}
                    \log\br{2+\verts{z_2-2^{k_2}t^2}}^2
            \bigg\Vert_{L^1\br{\verts{t}\sim 1}}
            \Verts{f_1}_{L^p}\Verts{f_2}_{L^q}\\
            \lesssim &
            \Verts{
                \log\br{2+\verts{2^{k_1}t}}^2
                \log\br{2+\verts{2^{k_2}t^2}}^2
            }_{L^1\br{\verts{t}\sim 1}}
            \Verts{f_1}_{L^p}\Verts{f_2}_{L^q}\\
            \lesssim &
            \verts{k}^4
            \Verts{f_1}_{L^p}\Verts{f_2}_{L^q}.
        \end{aligned}
    \end{equation*}
    This proves the estimate \eqref{eq_Tk_dk_p_grw}. As for \eqref{eq_Tk_dk_2_dec}, we'll perform several further reductions. We first claim that \eqref{eq_Tk_dk_2_dec} can be derived from the following estimate
    \begin{equation}\label{eq_T_0_dec}
        \Verts{
            T_0
            \br{
                \Delta^{\br{1}}_{k_1}f_1,
                \Delta^{\br{2}}_{k_2}f_2
            }
        }_{L^1}
        \lesssim 
        2^{-\sigma\verts{k}}
        \Verts{f_1}_{L^2}
        \Verts{f_2}_{L^2}.
    \end{equation}
    Via dilation symmetry \eqref{eq_TH_j_dil_sym}, \eqref{eq_T_0_dec} implies a uniform estimate on the individual pieces:
    \begin{equation}\label{eq_T_jkuni_dec}
        \Verts{
            T_j
            \br{
                \Delta^{\br{1}}_{j+k_1}f_1,
                \Delta^{\br{2}}_{2j+k_2}f_2
            }
        }_{L^1}
        \lesssim 
        2^{-\sigma\verts{k}}
        \Verts{f_1}_{L^2}
        \Verts{f_2}_{L^2}.
    \end{equation}
    Yet, due to the frequency localization of the Littlewood-Paley pieces, we get 
    \begin{equation*}
        \Verts{
            T_j
            \br{
                \Delta^{\br{1}}_{j+k_1}f_1,
                \Delta^{\br{2}}_{2j+k_2}f_2
            }
        }_{L^1}
        \lesssim 
        2^{-\sigma\verts{k}}
        \Verts{\widetilde{\Delta}^{\br{1}}_{j+k_1}f_1}_{L^2}
        \Verts{\widetilde{\Delta}^{\br{2}}_{2j+k_2}f_2}_{L^2}
    \end{equation*}
    for free. We then sum over \(j\in\Z\) and apply Cauchy-Schwartz inequality to obtain \eqref{eq_Tk_dk_2_dec}:
    \begin{equation*}
        \begin{aligned}
            \Verts{T^{\br{k}}\br{f_1,f_2}}_{L^1}
            \lesssim &
            2^{-\sigma\verts{k}}
            \sum_{j\in\Z}
            \Verts{\widetilde{\Delta}^{\br{1}}_{j+k_1}f_1}_{L^2}
            \Verts{\widetilde{\Delta}^{\br{2}}_{2j+k_2}f_2}_{L^2}\\
            \lesssim & 2^{-\sigma\verts{k}}
            \Verts{f_1}_{L^2}
            \Verts{f_2}_{L^2}.
        \end{aligned}
    \end{equation*}
    Lastly, we claim that we only need to prove a localized version of \eqref{eq_T_0_dec} instead:
    \begin{equation}\label{eq_T_0loc_dec}
        \Verts{
            T_0
            \br{
                f_1,
                f_2
            }
        }_{L^1\br{I_0}}
        \lesssim 
        2^{-\sigma\verts{k}}
        \Verts{f_1}_{L^2}
        \Verts{f_2}_{L^2},
    \end{equation}
    where we write \(I_0:=\left[-\frac{1}{2},\frac{1}{2}\right]^2\) and assume that \(\Delta^{\br{l}}_{k_l}f_l=f_l\). Heuristically speaking, due to the physical localization of the kernel \(\frac{\psi\br{t}}{t}\), the physical localization gets carried to both of the input functions:
    \begin{equation*}
        \1_{I_0}
        T_0\br{f_1,f_2}
        =
        \1_{I_0}
        T_0\br{\varphi f_1,\varphi f_2}.
    \end{equation*}
    Thus, it is tempting to directly conclude that:
    \begin{equation}\label{eq_T_0loc_heu}
        \Verts{
            T_0
            \br{
                f_1,
                f_2
            }
        }_{L^1\br{I_0}}
        \lesssim 
        2^{-\sigma\verts{k}}
        \Verts{\varphi f_1}_{L^2}
        \Verts{\varphi f_2}_{L^2}
    \end{equation}
    Yet, in \eqref{eq_T_0loc_heu}, the physical localization ruins the frequency localization assumption:
    \begin{equation*}
        \Delta^{\br{l}}_{k_l}\br{\varphi f_l}
        \neq 
        \varphi f_l
        =
        \varphi \Delta^{\br{l}}_{k_l} f_l
        .
    \end{equation*}
    Naturally, one way to fix the above issue is to control the contribution from the commutator:
    \begin{equation*}
        \left[\varphi , \Delta^{\br{l}}_{k_l}\right] f_l :=
        \varphi \Delta^{\br{l}}_{k_l} f_l
        -
        \Delta^{\br{l}}_{k_l}\br{\varphi f_l}.
    \end{equation*}
    In other words, we shall write:
    \begin{equation*}
        \begin{aligned}
            &
            \1_{I_0}T_0\br{f_1,f_2} 
            = 
            \1_{I_0}T_0\br{\varphi \Delta^{\br{1}}_{k_1} f_1,\varphi \Delta^{\br{2}}_{k_2} f_2}\\
            = &
            \1_{I_0}T_0
            \br{
                \Delta^{\br{1}}_{k_1}\br{\varphi f_1},
                \Delta^{\br{2}}_{k_2}\br{\varphi f_2}
            }
            +
            \1_{I_0}T_0
            \br{ 
                \left[\varphi , \Delta^{\br{1}}_{k_1}\right] f_1,
                \left[\varphi , \Delta^{\br{2}}_{k_2}\right] f_2
            }\\
            + &
            \1_{I_0}T_0
            \br{
                \left[\varphi , \Delta^{\br{1}}_{k_1}\right] f_1,
                \Delta^{\br{2}}_{k_2}\br{\varphi f_2}
            }
            + 
            \1_{I_0}T_0
            \br{
                \Delta^{\br{1}}_{k_1}\br{ \varphi f_1},
                \left[\varphi , \Delta^{\br{2}}_{k_2}\right] f_2
            }
        \end{aligned}
    \end{equation*}
    and then pass the physical localization to all the commutator:
    \begin{equation}\label{eq_T_0loctel}
        \begin{aligned}
            &
            \1_{I_0}T_0\br{f_1,f_2}\\
            = &
            \1_{I_0}T_0
            \br{
                \Delta^{\br{1}}_{k_1}\br{\varphi f_1},
                \Delta^{\br{2}}_{k_2}\br{\varphi f_2}
            }
            +
            \1_{I_0}T_0
            \br{ 
                \varphi \left[\varphi , \Delta^{\br{1}}_{k_1}\right] f_1,
                \varphi \left[\varphi , \Delta^{\br{2}}_{k_2}\right] f_2
            }\\
            + &
            \1_{I_0}T_0
            \br{
                \varphi \left[\varphi , \Delta^{\br{1}}_{k_1}\right] f_1,
                \Delta^{\br{2}}_{k_2}\br{\varphi f_2}
            }
            + 
            \1_{I_0}T_0
            \br{
                \Delta^{\br{1}}_{k_1}\br{ \varphi f_1},
                \varphi \left[\varphi , \Delta^{\br{2}}_{k_2}\right] f_2
            }.
        \end{aligned}
    \end{equation}
    Using a trivial estimate:
    \begin{equation}
        \Verts{T_0\br{f,g}}_{L^1\br{I_0}}\leq \Verts{T_0\br{f,g}}_{L^2\br{I_0}}\lesssim \Verts{f}_{L^2}\Verts{g}_{L^2}
    \end{equation}
    and \eqref{eq_T_0loc_dec} on \eqref{eq_T_0loctel}, we obtain:
    \begin{equation*}
        \begin{aligned}
            \Verts{T_0\br{f_1,f_2}}_{L^1\br{I_0}}
            \lesssim & 2^{-\sigma\verts{k}}
            \Verts{
                \Delta^{\br{1}}_{k_1}\br{\varphi f_1}
            }_{L^2}
            \cdot
            \Verts{
                \Delta^{\br{2}}_{k_2}\br{\varphi f_2}
            }_{L^2}\\
            + & 
            \Verts{
                \varphi \left[\varphi , \Delta^{\br{1}}_{k_1}\right] f_1
            }_{L^2}
            \cdot
            \Verts{
                \varphi \left[\varphi , \Delta^{\br{2}}_{k_2}\right] f_2
            }_{L^2}\\
            + &
            \Verts{
                \varphi \left[\varphi , \Delta^{\br{1}}_{k_1}\right] f_1
            }_{L^2}
            \cdot
            \Verts{
                \Delta^{\br{2}}_{k_2}\br{\varphi f_2}
            }_{L^2}\\
            + &
            \Verts{
                \Delta^{\br{1}}_{k_1}\br{\varphi f_1}
            }_{L^2}
            \cdot
            \Verts{
                \varphi \left[\varphi , \Delta^{\br{2}}_{k_2}\right] f_2
            }_{L^2}
        \end{aligned}
    \end{equation*}
    Yet, an easy calculation shows that:
    \begin{equation}
        \begin{aligned}
            \Br{\phi,\Delta_{k}}f\br{x}= &
            \int_{\R}
                \br{
                    \phi\br{x}
                    -\phi\br{y}
                }
                \widecheck{\psi_k}\br{x-y}
                f\br{y}
            dy\\
            \implies
            \verts{\Br{\phi,\Delta_{k}}f}\br{x}\lesssim &
            \int_{\R}
                \Verts{\phi'}_{L^\infty}\cdot\verts{x-y}\cdot
                2^k\ang{2^k\br{x-y}}^{-N}
                \verts{f}\br{y}
            dy\\
            \lesssim & 
            2^{-k}\Verts{\phi'}_{L^\infty}
            \int_\R
                2^k\ang{2^k\br{x-y}}^{1-N}
                \verts{f}\br{y}
            dy\\
            \lesssim & 2^{-k} \Verts{\phi'}_{L^\infty} \mathcal{M}f\br{x},
        \end{aligned}
    \end{equation}
    for any \(\phi\in\mathcal{S}\br{\R}\). In other words, we have:
    \begin{equation}
        \verts{\Br{\varphi , \Delta^{\br{l}}_{k_l}} f_l}\lesssim 2^{-k_l} \mathcal{M}^{\br{l}}f_l
    \end{equation}
    and thus, a correction for \eqref{eq_T_0loc_heu}:
    \begin{equation}\label{eq_T_0loc_rig}
        \Verts{T_0\br{f_1,f_2}}_{L^1\br{I_0}}\lesssim 2^{-\sigma\verts{k}}\prod_{l=1,2}
        \br{
        \Verts{\varphi f_l}_{L^2}+
        \Verts{\varphi \mathcal{M}^{\br{l}}f_l}_{L^2}
        }.
    \end{equation}
    %%%%%%%%%%%%%%%%%%%%%
    Next, making use of translation symmetry, \eqref{eq_T_0loc_rig} implies the corresponding estimate on any unit square \(Q\):
    \begin{equation}
        \Verts{
            T_0
            \br{
                f_1,
                f_2
            }
        }_{L^1\br{Q}}
        \lesssim 
        2^{-\sigma\verts{k}}
        \prod_{l=1,2}
            \br{
            \Verts{\varphi_Q f_l}_{L^2}+
            \Verts{\varphi_Q \mathcal{M}^{\br{l}}f_l}_{L^2}
            },
    \end{equation}
    % Yet, due to the physical localization of the kernel \(\frac{\psi\br{t}}{t}\), the physical localization gets carried to the input functions. As a result, we see that
    % \begin{equation}
    %     \Verts{
    %         T_0
    %         \br{
    %             f_1,
    %             f_2
    %         }
    %     }_{L^1\br{Q}}
    %     \lesssim 
    %     2^{-\sigma\verts{k}}
    %     \Verts{f_1}_{L^2\br{\widetilde{Q}}}
    %     \Verts{f_2}_{L^2\br{\widetilde{Q}}},
    % \end{equation}
    where \(\varphi_Q\) is a bump function adapted to \(Q\). We now tiles the physical space \(\R^2\) with cubes of unit side-length and estimate:
    \begin{equation*}
        \Verts{T_0\br{f_1,f_2}}_{L^1}=
        \sum_{Q}
        \Verts{T_0\br{f_1,f_2}}_{L^1\br{Q}}
        \lesssim
        2^{-\sigma\verts{k}}
        \sum_{Q}
        \prod_{l=1,2}
            \br{
            \Verts{\varphi_Q f_l}_{L^2}+
            \Verts{\varphi_Q \mathcal{M}^{\br{l}}f_l}_{L^2}
            }.
    \end{equation*}
    Using the fact that the collection of support of \(\varphi_Q\)s only has finite overlaps, Cauchy-Schwartz inequality yield:
    \begin{equation}
        \Verts{T_0\br{f_1,f_2}}_{L^1}\lesssim 2^{-\sigma\verts{k}}\Verts{f_1}_{L^2}\Verts{f_2}_{L^2}.
    \end{equation}
    Finally, recall the assumptions that \(\Delta^{\br{l}}_{k_l}f_l=f_l\), we recover \eqref{eq_T_0_dec}. It remains to prove \eqref{eq_T_0loc_dec} under the corresponding frequency assumptions. We call such form of an estimate smoothing inequality.

\vspace{.5in}

\section{A trilinear smoothing inequality}

\subsection{Reduction} \ 

Recall the trilinear form in Theorem \ref{thm 5 - smoothing} given by
\begin{align*}
    \Lambda (f_1, f_2, f_3) = \int_{\R^3} f_1(x+t, y) f_2(x, y+t^2) f_3(x, y) \zeta(x, y, t) dx dy dt.
\end{align*}

The smoothing inequality in Theorem \ref{thm 5 - smoothing} is equivalent to 
\begin{align*}
    \lf \Lambda(f_1, f_2, f_3) \rf \leq C \norm{f_1}{H^{(-\sigma, 0)}} \norm{f_2}{H^{(0, -\sigma)}} \norm{f_3}{L^{\infty}}
\end{align*}
for $\lambda >1$, assuming that $\supp \wh{f}_l \subseteq \{|\xi_l| \sim \lb\}$ for at least one index $l=1, 2$. 

We dualize $|\Lambda (f_1, f_2, f_3)| = |\la T_{\loc} (f_1, f_2), f_3 \ra|$, where $T_{\loc}$ denotes the bilinear operator
\begin{equation}
    T_{\loc}(f_{1},f_{2})(x,y):=\int_{\mathbb{R}}f_{1}(x+t,y)f_{2}(x,y+t^{2})\zeta (x,y,t) dt,
\end{equation}
where $\zeta$ is a smooth test function on $\R^2 \times (0, \infty)$. Our goal is thus to prove the following \textit{bilinear} form estimate: there exists $\sigma>0$ such that
\begin{equation}\label{goal0}
    \| T_{\loc}(f_{1},f_{2})\|_{L^{1}}\lesssim \lambda^{-\sigma}\|f_{1}\|_{L^{2}}\|f_{2}\|_{L^{2}},
\end{equation}
where $\lb>1$ assuming that $\supp \wh{f}_l \subseteq \{|\xi_l| \sim \lb \}$ for at least one index $l = 1, 2$. Indeed, in the dual form, \eqref{goal0} implies we have the trilinear smoothing inequality
\begin{equation}
    |\Lambda(f_{1},f_{2},f_{3})|=|\langle  T_{\loc}(f_{1},f_{2}),f_{3} \rangle| \lesssim  \lambda^{-\sigma}\|f_{1}\|_{L^{2}}\|f_{2}\|_{L^{2}}\|f_{3}\|_{L^{\infty}}.
\end{equation}
By interpolation, it suffices to prove the following two estimates
\begin{equation}\label{trivial1}
     \| T_{\loc}(f_{1},f_{2})\|_{L^{1}}\lesssim \|f_{1}\|_{L^{\frac{3}{2}}}\|f_{2}\|_{L^{\frac{3}{2}}}
\end{equation}
\begin{equation}\label{goal1}
     \| T_{\loc}(f_{1},f_{2})\|_{L^{1}}\lesssim \lambda^{-\sigma}\|f_{1}\|_{L^{\infty}}\|f_{2}\|_{L^{\infty}}.
\end{equation}
We start with the proof of the first estimate which is more straightforward. 
\noindent\begin{proof}[Proof of \eqref{trivial1}]
\begin{equation}
    \begin{aligned}
         \| T_{\loc}(f_{1},f_{2})\|_{L^{1}}&= \int_{\mathbb{R}^{2}}|f_{1}(x,y)f_{2}(x+t,y+t^{2})\zeta (x+t,y,t)|dtdxdy\\
       &\leq\int_{\mathbb{R}^{2}}f_{1}(x,y)\left(\int_{\mathbb{R}}|f_{2}(x+t,y+t^{2})|\eta(t)dt  \right)dxdy \\
       &\leq \|f_{1}\|_{L^{\frac{3}{2}}}\|Tf_{2}\|_{L^{3}}\\
       &\lesssim \|f_{1}\|_{L^{\frac{3}{2}}}\|f_{2}\|_{L^{\frac{3}{2}}}
    \end{aligned}
\end{equation}
where we bound $|\zeta (x+t,y,t)|\leq \eta(t)$ and recall the result by Strichartz (see \cite{strichartz1970convolutions}) that averages along a parabola are bounded as maps on $L^{\frac{3}{2}} \rightarrow L^3$.
\end{proof}

The main difficulties arise in the proof of the second estimate \eqref{goal1}. We start by normalizing $\|f_{1}\|_{L^{\infty}}=\|f_{2}\|_{L^{\infty}}=1$. 

\subsection{Spatial localization} \

For our spatial localization, we cut the $\mathbb{R}^{2}$ plane into grid of squares of sidelength $\lambda^{-\gamma}$. That is, let $\eta \in C^{\infty}_c(\R^2)$ be s.t.
\begin{equation}
    1=\sum_{m\in \mathbb{Z}^{2}}\eta(\lambda^{\gamma}(x,y)-m).
\end{equation}
Then for $l=1,2$, we have
\begin{equation}
    f_{l}(x,y)=\sum_{m\in \mathbb{Z}^{2}}f_{l,m}(x,y)\quad with \quad f_{l,m}=f_{l}(x,y)\eta(\lambda^{\gamma}(x,y)-m).
\end{equation}
Let $Q_{m}$ denote the cube of sidelength $2\lambda^{-\gamma}$ centered at $\lambda^{-\gamma}m$ and let $\mathfrak{I}$ denote the set of all pairs $m=(m_{1},m_{2})\in (\mathbb{Z}^{2})^{2}$ such that
\begin{equation}
    \|T_{\loc}(f_{1,m_{1}},f_{2,m_{2}})\|_{L^{1}}\neq 0.
\end{equation}
By the compactness of $\supp \zeta$ and the composing map in $f_{1},f_{2}$, we may verify  $\#\mathfrak{I}=O(\lambda^{3\gamma})$. (Essentially, we have four variables which satisfy one relation so that we are left with $4-1=3$.)  By the triangle inequality,
\begin{equation}
   \|T_{\loc}(f_{1},f_{2})\|_{L^{1}}\leq \sum_{m\in \mathfrak{I}}\|T_{\loc}(f_{1,m_{1}},f_{2,m_{2}})\|_{L^{1}}.
\end{equation}
Next, we estimate each localized piece separately. By the Cauchy-Schwartz inequality,
\begin{equation}
    \begin{aligned}
        &\|T_{\loc}(f_{1,m_{1}},f_{2,m_{2}})\|_{L^{1}}^{2}\\
    =& \left(\int_{\mathbb{R}^{2}}|\int_{\mathbb{R}}f_{1,m_{1}}(x+t,y)f_{2,m_{2}}(x,y+t^{2})\zeta(x,y,t)dt|\cdot 1_{Q_{m}}(x,y) \; dxdy \right)^{2}\\
    \leq &\int_{\mathbb{R}^{2}}|\int_{\mathbb{R}}f_{1,m_{1}}(x+t,y)f_{2,m_{2}}(x,y+t^{2})\zeta(x,y,t)dt|^{2}dxdy \cdot \int_{\mathbb{R}^{2}}1_{Q_{m}}(x,y)dxdy\\
    \lesssim &\lambda^{-2\gamma}\int_{\mathbb{R}^{4}}f_{1,m_{1}}(x+t,y)f_{2,m_{2}}(x,y+t^{2})\zeta(x,y,t)\overline{f_{1,m_{1}}(x+s,y)}\overline{f_{2,m_{2}}(x,y+s^{2})}\overline{\zeta(x,y,s)}dtdsdxdy\\
    =&\lambda^{-2\gamma}\int_{\mathbb{R}^{4}}f_{1,m_{1}}(x+t,y)f_{2,m_{2}}(x,y+t^{2})\overline{f_{1,m_{1}}(x+(t+s),y)}\overline{f_{2,m_{2}}(x,y+(t+s)^{2})}\zeta_{m}(x,y,t,s)dxdydsdt,
    \end{aligned}
\end{equation}
where $\zeta_{m} \in C^{\infty}_c(\mathbb{R}^{2}\times (0,\infty)^{2})$ is a non-negative function supported in a cube with side length $O(\lambda^{-\gamma})$, so that for all $\ap$,
\begin{equation}
    \|\partial^{\alpha}\zeta_{m}\|_{C^{0}}\lesssim \lambda^{\gamma |\alpha|}.
\end{equation}

For each $m\in \mathfrak{I}$, we fix a point
\begin{equation}
    (\bar{x},\bar{y},\bar{t})=(\bar{x}_{m},\bar{y}_{m},\bar{t}_{m})\quad \text{such that}\quad (\bar{x}+\bar{t},\bar{y})\in Q_{m_{1}}, (\bar{x},\bar{y}+\bar{t}^{2})\in Q_{m_{2}}.
\end{equation}
Then for $(x,y,t,s)$ in the support of $\zeta_{m}$, we have
\begin{equation}
    |x-\bar{x}|+|y-\bar{y}|+|t-\bar{t}|\lesssim \lambda^{-\gamma}.
\end{equation}
Only when both $(x+t+s,y)$ and $(x+t,y)$ lie in the support of $f_{1,m_{1}}$, is $\|T_{\loc}(f_{1,m_{1}},f_{2,m_{2}})\|_{L^{1}}$ is nonzero. In this case, $|s|=|(x+t+s)-(x+t)|=O(\lambda^{-\gamma})$. By the mean value theorem, we have
\begin{equation}
    \begin{aligned}
        &f_{2,m_{2}}(x,y+(t+s)^{2})-f_{2,m_{2}}(x,y+t^{2}+2s\bar{t})\\
        \leq &\left((t+s)^{2}-(t^{2}+2s\bar{t})  \right)\|\partial_{2}f_{2,m_{2}}\|_{L^{\infty}}\\
        \lesssim &(2s(t-\bar{t})+s^{2})\cdot \lambda \lesssim \lambda^{-2\gamma+1}=\lambda^{-2\delta},
    \end{aligned}
\end{equation}
where we let $\delta=\gamma-\frac{1}{2}$. Notice that we use $\|\partial_{2}f_{2,m_{2}}\|_{L^{\infty}}\lesssim \lambda\|f_{2}\|_{L^{\infty}}$.

\subsection{Multiplicative derivatives} \ 

Define the multiplicative derivatives as follows:
\begin{equation}
    \mathcal{D}^{(1)}_{s}f(x,y):=f(x+s,y)\overline{f(x,y)},\quad \mathcal{D}^{(2)}_{s}f(x,y):=f(x,y+s)\overline{f(x,y)}.
\end{equation}
We then have
\begin{equation}\label{twoterm}
    \begin{aligned}
        &\|T_{\loc}(f_{1,m_{1}},f_{2,m_{2}})\|_{L^{1}}\\
    \lesssim &\textcolor{blue}{\lambda^{-\gamma}}\sum_{m\in J} \left|\textcolor{orange}{\int_{|s|\lesssim \lambda^{-\gamma}}\left(\int_{\mathbb{R}^{3}}\mathcal{D}^{(1)}_{s}f_{1,m_{1}}(x+t,y)\mathcal{D}^{(2)}_{2s\bar{t}}f_{2,m_{2}}(x,y+t^{2})\zeta_{m}(x,y,t,s)dxdydt  \right)ds}\right|^{\frac{1}{2}}\\
    +&\lambda^{-\gamma}\sum_{m\in J}\left(\int_{|s|\lesssim \lambda^{-\gamma}}\left(\int_{\mathbb{R}^{3}}\lambda^{-2\delta}\zeta_{m}(x,y,t,s)dxdydt \right) ds  \right)^{\frac{1}{2}}
    \end{aligned}
\end{equation}
where we bounded $f_{1}$ in the second term by its $L^{\infty}$ norm.
In addition, the second term can be bounded by
\begin{equation}
    \lambda^{-\gamma}\cdot \lambda^{3\gamma}\cdot(\lambda^{-\gamma}\cdot \lambda^{-2\delta}\cdot \lambda^{-3\gamma})^{\frac{1}{2}}=\lambda^{-\delta}.
\end{equation}
Now we estimate the first term in \eqref{twoterm}. We want to take the square outside. So we apply Cauchy-Schwartz in the following way
\begin{equation}
    \textcolor{blue}{\lambda^{-\gamma}}(\sum_{m\in J}|\textcolor{orange}{a_{m}}|^{\frac{1}{2}}\cdot 1)\leq \lambda^{-\gamma}(\sum_{m\in J}\textcolor{orange}{a_{m}})^{\frac{1}{2}}\cdot (\sum_{m\in J}1)^{\frac{1}{2}}\lesssim\lambda^{-\gamma}\cdot \lambda^{\frac{3}{2}\gamma}(\sum_{m\in J}\textcolor{orange}{a_{m}})^{\frac{1}{2}}=\lambda^{\frac{\gamma}{2}}(\sum_{m\in J}\textcolor{orange}{a_{m}})^{\frac{1}{2}}
\end{equation}
Hence we have the first term in \eqref{twoterm} is majorized by 
\begin{equation}\label{term1}
    \lambda^{\frac{\gamma}{2}}\left(\textcolor{orange}{\int_{|s|\lesssim \lambda^{-\gamma}}} \sum_{m\in J}\textcolor{orange}{\left|\int_{\mathbb{R}^{3}}\mathcal{D}^{(1)}_{s}f_{1,m_{1}}(x+t,y)\mathcal{D}^{(2)}_{2s\bar{t}}f_{2,m_{2}}(x,y+t^{2})\zeta_{m}(x,y,t,s)dxdydt \right|ds}\right)^{\frac{1}{2}}
\end{equation}
Let $s$ be s.t. $|s|=O(\lambda^{-\gamma})$, the function $\mathcal{D}_{s}^{(l)}f_{l,m_{l}}$ is in a cube $Q_{m_{l}}$ of side length $\lambda^{-\gamma}$. We may do a Fourier series expansion on that cube.
\begin{equation}
    \mathcal{D}_{s}^{(l)}f_{l,m_{l}}(x,y)=\eta_{m_{l}}(x,y)\sum_{k\in \mathbb{Z}^{2}}a_{l,m_{l},k,s}e^{2\pi i\lambda^{\gamma}k\cdot (x,y)}
\end{equation}
where $\eta_{m_{l}}(x,y)$ allows us to record the spatial localization around $Q_{m_{l}}$. The Fourier coefficients are given by
\begin{equation}
    a_{l,m_{l},k,s}=\lambda^{2\gamma}\widehat{ \mathcal{D}_{s}^{(l)}f_{l,m_{l}}}(\lambda^{\gamma}k)
\end{equation}
We then have
\begin{equation}\label{trivialcoeffes}
\begin{aligned}
    \sum_{k\in \mathbb{Z}^{2}}|a_{l,m_{l},k,s}|^{2}&= \sum_{k\in \mathbb{Z}^{2}}\lambda^{4\gamma}|\widehat{ \mathcal{D}_{s}^{(l)}f_{l,m_{l}}}(\lambda^{\gamma}k)|^{2}\\
    &=\lambda^{2\gamma}\sum_{k\in \mathbb{Z}^{2}}\lambda^{2\gamma}|\widehat{ \mathcal{D}_{s}^{(l)}f_{l,m_{l}}}(\lambda^{\gamma}k)|^{2}\\
    &=\lambda^{2\gamma}\sum_{k\in \mathbb{Z}^{2}}|\widehat{ \mathcal{D}_{s}^{(l)}f_{l,m_{l}}}(k)|^{2}\\
    &=\lambda^{2\gamma}\| \mathcal{D}_{s}^{(l)}f_{l,m_{l}}(k)\|_{L^{2}}^{2}\\
    &\lesssim \lambda^{2\gamma}\|f\|_{L^{\infty}}^{4}\cdot |Q_{m}|\lesssim 1
\end{aligned}
\end{equation}

On the other hand, since $f_{2}$ is such that $\wh{f}_2(\xi_1, \xi_2) \neq 0$ for $|\xi_2| \leq 2\lambda$, we can expect a stronger bound holds when $k$ is larger in an appropriate sense.
Later, me will use the following version of Bernstein's lemma.
\begin{lemma}[Bernstein]
    Let $B$ be a unit ball in $\mathbb{R}^{d}$. Suppose that $\widehat{f}\subseteq \lambda B$, then we have
\begin{equation}
    \|D^{k}f\|_{L^{q}}:=\underset{|\alpha|=k}{\operatorname{sup}}\|\partial^{\alpha}f\|_{L^{q}}\lesssim \lambda^{k+d(\frac{1}{p}-\frac{1}{q})}\|f\|_{L^{p}}
\end{equation}
for $p,q\in [1,\infty]$ with $1\leq p\leq q$.
\end{lemma}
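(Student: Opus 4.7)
The plan is to prove this by a standard reproducing-kernel argument combined with Young's convolution inequality. First, I would pick a Schwartz function $\varphi \in \mathcal{S}(\mathbb{R}^d)$ that equals $1$ on the closed unit ball $B$. Since $\wh{f}$ is supported in $\lambda B$, the support condition gives $\wh{f}(\xi) = \varphi(\xi/\lambda)\wh{f}(\xi)$. Multiplying both sides by the Fourier symbol $(2\pi i \xi)^\alpha$ corresponding to $\partial^\alpha$ and inverting the Fourier transform yields the reproducing identity
\[
\partial^\alpha f = K_{\lambda,\alpha} \ast f, \qquad K_{\lambda,\alpha}(x) := \lambda^{d+|\alpha|}\Psi_\alpha(\lambda x),
\]
where $\Psi_\alpha$ denotes the inverse Fourier transform of $\eta \mapsto (2\pi i \eta)^\alpha \varphi(\eta)$. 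The function $\Psi_\alpha$ is itself Schwartz, and in particular $\Verts{\Psi_\alpha}_{L^r}$ is finite for every $r \in [1,\infty]$ with a constant depending only on $\varphi$ and $\alpha$.

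Next, a change of variable gives $\Verts{K_{\lambda,\alpha}}_{L^r} = \lambda^{d+|\alpha|-d/r}\Verts{\Psi_\alpha}_{L^r}$. Under the hypothesis $p \leq q$, one has $1/p - 1/q \in [0,1]$, so there is a unique $r \in [1,\infty]$ satisfying Young's scaling relation $1 + 1/q = 1/p + 1/r$, namely $1/r = 1 - (1/p - 1/q)$. Applying Young's convolution inequality to the reproducing identity then yields
\[
\Verts{\partial^\alpha f}_{L^q} \leq \Verts{K_{\lambda,\alpha}}_{L^r}\Verts{f}_{L^p} \lesssim \lambda^{d+|\alpha|-d/r}\Verts{f}_{L^p} = \lambda^{|\alpha| + d(1/p-1/q)}\Verts{f}_{L^p},
\]
and taking the supremum over multi-indices with $|\alpha| = k$ gives the claimed inequality.

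Regarding difficulty: this is a direct orchestration of classical Fourier-analytic tools, and I do not anticipate any serious obstacle. The only detail to watch is that the exponent bookkeeping $d + |\alpha| - d/r = |\alpha| + d(1/p - 1/q)$ matches the target power of $\lambda$, which it does precisely because of the choice of $r$ dictated by Young's scaling relation; and that the constant implicit in $\lesssim$ depends on $\alpha$ through $\Verts{\Psi_\alpha}_{L^r}$, which is harmless since the final supremum is over the finite set $\{|\alpha|=k\}$.
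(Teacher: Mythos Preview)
Your argument is correct and is the standard proof of Bernstein's inequality via a reproducing kernel and Young's convolution inequality. The paper itself does not supply a proof of this lemma; it is simply stated as a known result (and only the case $d=1$, $p=q=\infty$ is used), so there is nothing to compare against.
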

In our case, we only use that $d=1,\: p=q=\infty$.
In the following, we integrate by parts in the first variable twice and the second variable $N$ times.

\begin{equation}
\begin{aligned}
    |a_{l,m_{l},k,s}|&=\left|\lambda^{2\gamma}\cdot \int_{\mathbb{R}^{2}} \mathcal{D}_{s}^{(l)}f_{l,m_{l}}(x,y)e^{2\pi i\lambda^{\gamma}k\cdot (x,y)}dxdy\right|\\
    &=\left|  \int_{\mathbb{R}^{2}} \mathcal{D}_{s}^{(l)}f_{l,m_{l}}(\frac{x}{\lambda^{\gamma}},\frac{y}{\lambda^{\gamma}})e^{2\pi ik\cdot (x,y)}dxdy  \right|\\
    &=|k_{1}|^{-2}|k_{2}|^{-N}(\lambda^{\gamma})^{-N-2}\int_{\mathbb{R}^{2}} (\partial^{2}_{1}\partial^{N}_{2} \mathcal{D}_{s}^{(l)}f_{l,m_{l}})(\frac{x}{\lambda^{\gamma}},\frac{y}{\lambda^{\gamma}})e^{2\pi ik\cdot (x,y)}dxdy\\
    &\lesssim |k_{1}|^{-2}|k_{2}|^{-N}(\lambda^{\gamma})^{-N-2}\|\partial^{2}_{1}\partial^{N}_{2} \mathcal{D}_{s}^{(l)}f_{l,m_{l}}\|_{L^{\infty}}\\
    %&\lesssim |k_{1}|^{-2}|k_{2}|^{-N}(\lambda^{\gamma})^{-N-2}\|\partial^{N}_{2} \mathcal{D}_{s}^{(l)}f_{l,m_{l}}(x, \cdot)\|_{L^{\infty}}\\
    &\lesssim |k_{1}|^{-2}|k_{2}|^{-N}(\lambda^{\gamma})^{-N-2}\cdot \lambda^{N}\\
    &=|k_{1}|^{-2}|k_{2}|^{-N}\lambda^{(1-\gamma-\frac{2}{N})\cdot N}
\end{aligned}
\end{equation}
where in the fourth line, notice that the support of the integral is roughly one so that we can bound it by the $L^{\infty}$ norm of the integrand. In the second to last inequality, notice that $\operatorname{supp}\mathcal{F}(f)\sim \operatorname{supp}\mathcal{F}(T_{-s}f_{2,m_{2}}\cdot f_{2,m_{2}}) $ are contained in the ball centered at $0$ and of radius at the scale $\lambda$. Then we have
\begin{equation}
\begin{aligned}
    \|\partial^{N}_{2} \mathcal{D}_{s}^{(l)}f_{l,m_{l}}\|_{L^{\infty}_{2}}&=\left\|\sum_{\alpha_{1}+\alpha_{2}=N}\partial^{\alpha_{1}}_{2}(T_{-s}\eta_{m_{2}}\cdot \eta_{m_{2}})\cdot \partial^{\alpha_{2}}_{2}(T_{-s}^{(2)}f_{2,m_{2}}f_{2,m_{2}})\right\|_{L^{\infty}_{2}}\\
    &\lesssim \sum_{\alpha_{1}+\alpha_{2}=N} \lambda^{\alpha_{1}}\cdot \lambda^{\alpha_{2}}\\
    &\lesssim_{N}\lambda^{N}
\end{aligned}
\end{equation}
Hence, we have
\begin{equation}
    \sum_{k_{1}\in \mathbb{Z}}|a_{l,m_{l},k,s}|^{2}\lesssim |k_{2}|^{-N}\lambda^{(1-\gamma-\frac{2}{N})\cdot N}
\end{equation}
   Fix a small $\varepsilon_{1}>0$, then 
\begin{equation}
    \sum_{k_{2}\geq \lambda^{1-\gamma+\varepsilon_{1}}}\sum_{k_{1}\in \mathbb{Z}}|a_{l,m_{l},k,s}|^{2}\lesssim \lambda^{(1-\gamma+\varepsilon_{1})(-N)}\cdot \lambda^{(1-\gamma-\frac{2}{N})\cdot N}=\lambda^{-N\varepsilon_{1}-2}
\end{equation}
For a fixed $\varepsilon_1$, we can choose $N$ large enough so that $N\varepsilon_1+2$ can be arbitrarily large. As such, we obtain an arbitrary polynomial decay in $\lambda$.
\begin{equation}
     \sum_{k_{2}\geq \lambda^{1-\gamma+\varepsilon_{1}}}\sum_{k_{1}\in \mathbb{Z}}|a_{l,m_{l},k,s}|^{2}\lesssim_{N,\varepsilon_{1}}\lambda^{-N}
\end{equation}
Hence, we can estimate \eqref{term1} by
\begin{equation}\label{k22small}
    O(\lambda^{-N})+\sum_{\substack{k\in (\mathbb{Z}^{2})^{2}\\ |k_{2,2}|\leq \lambda^{1-\gamma+\varepsilon_{1}}}}|a_{1,m_{1},k_{1},s}a_{2,m_{2},k_{2},2s\bar{t}}|\left| \int_{\mathbb{R}^{3}}e^{2\pi i \lambda^{\gamma}(k_{1}\cdot (x+t,y)+k_{2}\cdot (x,y+t^{2}))}\zeta_{m}(x,y,t,s)dxdydt  \right|
\end{equation}
The phase is given by
\begin{equation}
    \lambda^{\gamma}(k_{1,1}+k_{2,1})x+(k_{1,2}+k_{2,2})y+k_{1,1}t+k_{2,2}t^{2}
\end{equation}
The gradient of the phase is 
\begin{equation}
    \lambda^{\gamma}\left(k_{1,1}+k_{2,1},k_{1,2}+k_{2,2},k_{1,1}+2tk_{2,2} \right)=\lambda^{\gamma}\left(k_{1}+k_{2},k_{1,1}+2\bar{t}k_{2,2} \right)+O(\lambda^{1-\gamma +\varepsilon})
\end{equation}
where we use $|t-\bar{t}|\lesssim \lambda^{-\gamma}$ and $|k_{2,2}|\lesssim \lambda^{1-\gamma +\varepsilon}$. We get the desired decay of $O(\lambda^{1-\gamma +\varepsilon})$. After integrating by parts, the oscillatory integral decays fast enough when the gradient of the phase is nonzero. That is,
\begin{equation}
   \left| \int_{\mathbb{R}^{3}}e^{2\pi i \lambda^{\gamma}(k_{1}\cdot (x+t,y)+k_{2}\cdot (x,y+t^{2}))}\zeta_{m}(x,y,t,s)dxdydt  \right|\lesssim_{N}\lambda^{-N}
\end{equation}
unless 
\begin{equation}\label{krelation}
    |k_{1,1}+k_{2,1}|\lesssim \lambda^{\varepsilon_{2}}, \; |k_{1,2}+k_{2,2}|\lesssim \lambda^{\varepsilon_{2}}\; (|k_{1}+k_{2}|\lesssim \lambda^{\varepsilon_{2}}),\quad |k_{1,1}+2\bar{t}k_{2,2}|\lesssim \lambda^{\varepsilon_{2}}
\end{equation}
Since $\supp \zeta_{m}$ is contained in a cube of side length $\lambda^{-\gamma}$, after pushing the modulus inside \eqref{k22small}, for those $k$ which satisfy \eqref{krelation} (that is, those  $k$ for which we cannot apply the cancellation in the phase), we apply the trivial bound and obtain
\begin{equation}\label{reducecoeff}
    O(\lambda^{-3\gamma})\sum_{\substack{k\in (\mathbb{Z}^{2})^{2}\\ \eqref{krelation}\: \textrm{holds}}}|a_{1,m_{1},k_{1},s}a_{2,m_{2},k_{2},2s\bar{t}}|
\end{equation}
By Cauchy-Schwarz and the trivial estimate \eqref{trivialcoeffes}, we have have two estimates of \eqref{reducecoeff}
\begin{equation}
\begin{aligned}
    \eqref{reducecoeff}&\lesssim \left(\sum_{\substack{k\in (\mathbb{Z}^{2})^{2}\\ \eqref{krelation}\: \textrm{holds}}}|a_{1,m_{1},k_{1},s}|^{2} \right)^{\frac{1}{2}}\cdot \left(\sum_{\substack{k\in (\mathbb{Z}^{2})^{2}\\ \eqref{krelation}\: \textrm{holds}}}|a_{1,m_{1},k_{1},2s\bar{t}}|^{2} \right)^{\frac{1}{2}}\\
    &\lesssim \lambda^{-3\gamma+2\varepsilon_{2}} \left(\sum_{\substack{k_{1}\in \mathbb{Z}^{2}\\ |2\bar{t}k_{1,2}-k_{1,1}|\lesssim \lambda^{\varepsilon_{2}}}}|a_{1,m_{1},k_{1},s}|^{2} \right)^{\frac{1}{2}}
\end{aligned}
\end{equation}
This is by estimating the second term trivially by $(\lambda^{4\varepsilon_{2}}\cdot 1^{2})^{\frac{1}{2}}$ and 
\begin{equation}
    |2\bar{t}k_{1,2}-k_{1,1}|\lesssim |2\bar{t}k_{1,2}+2\bar{t}k_{2,2}|+|2\bar{t}k_{2,2}+k_{1,1}|\lesssim \lambda^{\varepsilon_{2}}
\end{equation}
Similarly, using the trivial estimate, we get
\begin{equation}
     \eqref{reducecoeff}\lesssim \lambda^{-3\gamma+2\varepsilon_{2}} \left(\sum_{\substack{k_{1}\in \mathbb{Z}^{2}\\ |2\bar{t}k_{2,2}-k_{2,1}|\lesssim \lambda^{\varepsilon_{2}}}}|a_{2,m_{2},k_{2},2s\bar{t}}|^{2} \right)^{\frac{1}{2}}
\end{equation}
Plugging in the above estimate in \eqref{term1}, and by Cauchy-Schwartz we can further estimate \eqref{twoterm} 
\begin{equation}
    \|T_{\loc}(f_{1},f_{2})\|_{L^{1}}\lesssim \lambda^{-\delta}+\lambda^{-\frac{\gamma}{2}+\varepsilon_{2}}\left( \int_{|s|=O(\lambda^{-\gamma})}\sum_{m\in J}\sum_{\substack{k_{l}\in \mathbb{Z}^{2}\\ |2\bar{t}_{m}k_{l,2}-k_{l,1}|\lesssim \lambda^{\varepsilon_{2}}}}|a_{l,m_{l},k_{l},s}|^{2}ds \right)^{\frac{1}{4}}
\end{equation}

Note that for $l=2$, a change of variable $2s\bar{t}_{m}\rightarrow s$ only cause a Jacobian of O(1). For the moment, suppose $l=1$. Write $m=(m_{1},m_{2})$ and holding $m_{1}$ fixed, there are only $O(\lambda^{\gamma})$ of $m_{2}$ so that $(m_{1},m_{2})\in J$. On the other hand, we have a better bound if $k_{1,1}$ is large enough. To better explain this remark, hold $(k_{1,1},k_{1,2})\in \mathbb{Z}^{2}$ fixed, we want to count how many $m_{2}$'s are such that $|2\bar{t}_{m}k_{1,2}-k_{1,1}|\lesssim \lambda^{\varepsilon_{2}}$. This is equivalent to counting $m_{2}$ such that $|2k_{1,2}-\bar{t}_{m}^{-1}k_{1,1}|\lesssim \lambda^{\varepsilon_{2}}$ since $\bar{t}$ is bounded from above and below by a positive constant. Note that we want $(x_{m}+t_{m},y_{m})\in Q_{m_{1}}$ and $(x_{m},y_{m}+t_{m}^{2})\in Q_{m_{2}}$. As such, when $m_{1}$ is fixed, we take $x_{m}$ as our parameter. To figure out how many $m_{2}$ are such that $(x_{m},y_{m}+(m_{1,1}-x_{m})^{2})\in Q_{m_{2}}$, (Now $t_{m}=m_{1,1}-x_{m}$) notice that 
\begin{equation}
    t_{m}^{-1}-t_{m'}^{-1}=\frac{1}{m_{1,1}-x_{m}}-\frac{1}{m_{1,1}-x_{m'}}=\frac{x_{m}-x_{m'}}{(m_{1,1}-x_{m})(m_{1,1}-x_{m'})}=\frac{\lambda^{-\delta}}{t_{m}t_{m'}}=\lambda^{-\delta}
\end{equation}
where the distance between $m$ and $m'$ is around 1 in the lattice. That is, moving $m$ by a unit will cause a $\lambda^{-\gamma}|k_{1,1}|$ to change in $|2k_{1,2}-\bar{t}_{m}^{-1}k_{1,1}|$. We can afford at most $\lambda^{\varepsilon_{2}}$ change, that is there are at most $1+\frac{\lambda^{\varepsilon_{2}}}{\lambda^{-\gamma}|k_{1,1}|}=1+\lambda^{\gamma +\varepsilon_{2}}|k_{1,1}|^{-1}$ such $m_{2}$'s. Now we take another auxiliary parameter $\kappa \in (\textcolor{red}{5}\varepsilon_{2},1-\gamma)$. Then for each fixed $s$ and $l\in \{1,2\}$, we have
\begin{equation}
    \sum_{m\in J}\sum_{\substack{k_{l}\in \mathbb{Z}^{2}\\ |2\bar{t}_{m}k_{l,2}-k_{l,1}|\lesssim \lambda^{\varepsilon_{2}}}}|a_{l,m_{l},k_{l},s}|^{2}\lesssim \lambda^{\gamma}\sum_{m_{\textcolor{red}{l}}\in J_{\textcolor{red}{l}}}\sum_{\substack{k_{l}\in \mathbb{Z}^{2}\\ |k_{l,l}|\leq \lambda^{\kappa}}}|a_{l,m_{l},k_{l},s}|^{2}+R_{s,l}
\end{equation}
where 
\begin{equation}
    R_{s,l}=(1+\lambda^{\gamma+\varepsilon_{2}}-\kappa)\sum_{m_{l}\in J_{l}}\sum_{k_{l}\in \mathbb{Z}^{2}}|a_{l,m_{l},k_{l},s}|^{2}=(1+\lambda^{\gamma+\varepsilon_{2}}-\kappa)\cdot \lambda^{2\gamma}\lesssim \lambda^{3\gamma+\varepsilon_{2}-\kappa}
\end{equation}

The reason why we can drop the $1$ term in the first bracket is because
\begin{equation}
    \lambda^{3\gamma+\varepsilon_{2}-\kappa}\geq \lambda^{2\gamma} \Leftrightarrow \gamma+\varepsilon>\kappa
\end{equation}

This is true becuase $\gamma>\frac{1}{2}$ and $\kappa<1-\gamma$. Then we have
\begin{equation}
\begin{aligned}
    \|T_{\loc}(f_{1},f_{2})\|_{L^{1}}&\lesssim \lambda^{\frac{1}{2}-\gamma}+\lambda^{-\frac{\gamma}{2}+\varepsilon_{2}}\left(\int_{|s|=O(\lambda^{-\gamma})}\left( \lambda^{\gamma}\sum_{m_{l}\in J_{l}}\sum_{\substack{k_{l}\in \mathbb{Z}^{2}\\ |k_{l,l}|\leq \lambda^{\kappa}}}|a_{l,m_{l},k_{l},s}|^{2}+ \lambda^{3\gamma+\varepsilon_{2}-\kappa}\right) ds \right)^{\frac{1}{4}}\\
    &\lesssim \lambda^{\frac{1}{2}-\gamma}+\lambda^{(-\frac{\gamma}{2}+\varepsilon_{2})+(\frac{2\gamma+\varepsilon_{2}-\kappa}{4})}\\
    &+\lambda^{(-\frac{\gamma}{2}+\varepsilon_{2})+\frac{\gamma}{4}}\left(\int_{|s|=O(\lambda^{-\gamma})} \sum_{m_{l}\in J_{l}}\sum_{\substack{k_{l}\in \mathbb{Z}^{2}\\ |k_{l,l}|\leq \lambda^{\kappa}}}|a_{l,m_{l},k_{l},s}|^{2}  ds \right)^{\frac{1}{4}}\\
    &=\lambda^{\frac{1}{2}-\gamma}+\lambda^{-\frac{\kappa}{4}+\frac{5}{4}\varepsilon_{2}}+\lambda^{-\frac{\lambda}{4}+\varepsilon_{2}}\left(\int_{|s|=O(\lambda^{-\gamma})} \sum_{m_{l}\in J_{l}}\sum_{\substack{k_{l}\in \mathbb{Z}^{2}\\ |k_{l,l}|\leq \lambda^{\kappa}}}|a_{l,m_{l},k_{l},s}|^{2}  ds \right)^{\frac{1}{4}}
\end{aligned}
\end{equation}
Note that we have the following trivial estimate
\begin{equation}
   \left(\int_{|s|=O(\lambda^{-\gamma})} \sum_{m_{l}\in J_{l}}\sum_{\substack{k_{l}\in \mathbb{Z}^{2}\\ |k_{l,l}|\leq \lambda^{\kappa}}}|a_{l,m_{l},k_{l},s}|^{2}  ds \right)^{\frac{1}{4}}\lesssim (\lambda^{2\gamma}\cdot 1\cdot \lambda^{-\gamma})^{\frac{1}{4}}=\lambda^{\frac{\gamma}{4}}
\end{equation}
Yet, to win a decay, we need the following frequency pruning lemma.

\subsection{Frequency Prunning: Structural Decomposition}
\begin{lemma}\label{freqprunlem1}
    Define $\mathcal{D}_{s}f(x):=(\Tr_{-s}f\cdot \bar{f})(x)=f(x+s)\overline{f(x)}$. Let $f\in L^{2}(\mathbb{R}^{d})$, $\rho \in (0,1)$. Suppose that
    \begin{equation}\label{assumpt1}
        \int_{\mathbb{R}^{d}}\int_{|\xi|\leq R}|\widehat{\mathcal{D}_{s}f}|^{2}(\xi)d\xi ds\leq \rho\|f\|_{L^{2}}^{4}
    \end{equation}
Then there is an orthogonal decomposition $f=g+h$ with $\widehat{g}$ supported in some ball of radius $R$, $g\perp h$ and $\|g\|_{L^{2}}\geq \rho^{\frac{1}{2}}\|f\|_{L^{2}}$.
\end{lemma}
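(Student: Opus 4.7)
The plan is to pass to the Fourier side, convert the hypothesis into an integral identity involving $F := |\widehat{f}|^2$, and then realize $g$ as the Fourier restriction of $f$ to an appropriately chosen ball of radius $R$.

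For the reformulation step, writing $\mathcal{D}_s f = T_{-s}f \cdot \overline{f}$ and applying Plancherel in the $s$ variable yields
\begin{equation*}
    \int_{\R^d} |\widehat{\mathcal{D}_s f}(\xi)|^2 \, ds \;=\; \int_{\R^d} F(\eta) F(\eta - \xi) \, d\eta.
\end{equation*}
Integrating in $\xi$ over $|\xi| \leq R$ and applying Fubini gives
\begin{equation*}
    \int_{\R^d}\!\int_{|\xi| \leq R} |\widehat{\mathcal{D}_s f}(\xi)|^2 \, d\xi \, ds \;=\; \int_{\R^d} F(\eta) \, M(\eta) \, d\eta, \qquad M(\eta) := \int_{B_R(\eta)} F.
\end{equation*}
Using $\|f\|_{L^2}^2 = \|F\|_{L^1}$, the hypothesis translates to the cross-correlation bound $\int F \cdot M \leq \rho \|F\|_{L^1}^2$.

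For the construction of $g$, given any ball $B$ of radius $R$ I take $g := \mathcal{F}^{-1}(\mathbf{1}_B \widehat{f})$ and $h := f - g$. Then $\widehat{g}$ is supported in $B$, Plancherel gives $g \perp h$ in $L^2$, and $\|g\|_{L^2}^2 = \int_B F$. The conclusion therefore reduces to showing that some ball of radius $R$ carries $F$-mass at least $\rho \|F\|_{L^1}$, i.e.\ $\sup_\eta M(\eta) \geq \rho \|F\|_{L^1}$, at which point choosing $B = B_R(\eta_\star)$ for a near-maximizer of $M$ closes the argument.

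The crux lies in extracting this pointwise lower bound on $\sup M$ from the integral upper bound on $\int F \cdot M$ supplied by the hypothesis. The naive chain $\int F \cdot M \leq \|F\|_{L^1} \sup M$ runs the wrong way, so additional structure must be exploited. My plan is to combine a level-set decomposition of $M$ with the auxiliary identity $\int M = |B_R(0)| \, \|F\|_{L^1}$ and a pigeonhole argument to force the existence of a ball of large $F$-mass; at this step the constants $\rho$ versus $\rho^{1/2}$ and the particular radius $R$ must be carefully calibrated, and it will also be worth cross-checking the direction of the inequality in the hypothesis against the original iterative use of the lemma in \cite{CHRIST2021107863}. I expect this extraction step to be the genuinely delicate part of the proof.
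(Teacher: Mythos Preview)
Your reformulation and construction are exactly what the paper does: compute $\int_{\R^d}|\widehat{\mathcal{D}_s f}(\xi)|^2\,ds=\int F(\eta)F(\eta-\xi)\,d\eta$ with $F=|\widehat f|^2$, integrate over $|\xi|\le R$ to obtain $\int F\cdot M$, and then take $g$ to be the Fourier restriction of $f$ to a near-maximizing ball. So the strategy matches.

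The place where you got stuck is not a genuine obstacle but a typo in the statement you were given. The hypothesis should read
\[
\int_{\R^d}\int_{|\xi|\le R}|\widehat{\mathcal D_s f}(\xi)|^2\,d\xi\,ds \;\ge\; \rho\|f\|_{L^2}^4,
\]
i.e.\ a \emph{lower} bound, not an upper bound. (Your instinct to cross-check the direction against the iterative use in Lemma~\ref{freqprunlem2} was correct: there one argues by contradiction that if the desired upper bound on $f_\flat$ fails, then the present lemma produces a ball of large mass, which forces the hypothesis here to be a lower bound.) With the inequality reversed, the ``naive chain'' you already wrote down,
\[
\rho\|F\|_{L^1}^2 \;\le\; \int F\cdot M \;\le\; \|F\|_{L^1}\,\sup_\eta M(\eta),
\]
immediately gives $\sup_\eta M(\eta)\ge \rho\|F\|_{L^1}$, and you are done. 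No level-set decomposition or pigeonhole is needed; drop that part of the plan.
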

\begin{proof}
    \begin{equation}
        (\widehat{\mathcal{D}_{s}f})(\xi)=(\widehat{\Tr_{-s}f\cdot f})(\xi)=\left( (\Mod_{s}\widehat{f})\ast \widehat{f} \right)(\xi)=\int_{\mathbb{R}^{d}}e^{2\pi is(\xi+\xi')}\widehat{f}(\xi+\xi')\bar{\widehat{f}}(\xi')d\xi'
    \end{equation}
Hence, we have
\begin{equation}
    \begin{aligned}
        \int_{\mathbb{R}^{d}}|\widehat{\mathcal{D}_{s}f}(\xi)|^{2}ds&= \int_{\mathbb{R}^{d}}\widehat{\mathcal{D}_{s}f}(\xi)\overline{\widehat{\mathcal{D}_{s}f}}(\xi)ds\\
        &= \int_{\mathbb{R}^{3d}}e^{2\pi is\left((\xi+\xi')-(\xi+\xi'')\right)}\widehat{f}(\xi+\xi')\bar{\widehat{f}}(\xi')\bar{\widehat{f}}(\xi+\xi'')\widehat{f}(\xi'')d\xi'd\xi''ds\\
        &=\int_{\mathbb{R}^{2d}}\left(\int_{\mathbb{R}^{d}}e^{2\pi is(\xi'-\xi'')}ds \right)\widehat{f}(\xi+\xi')\bar{\widehat{f}}(\xi')\bar{\widehat{f}}(\xi+\xi'')\widehat{f}(\xi'')d\xi'd\xi''\\
        &=\int_{\mathbb{R}^{2d}}\delta(\xi'-\xi'')\widehat{f}(\xi+\xi')\bar{\widehat{f}}(\xi')\bar{\widehat{f}}(\xi+\xi'')\widehat{f}(\xi'')d\xi'd\xi''\\
        &=\int_{\mathbb{R}^{d}}|\widehat{f}(\xi+\xi')|^{2}|\widehat{f}(\xi')|^{2}d\xi'
    \end{aligned}
\end{equation}
Then  by \eqref{assumpt1}, we have
\begin{equation}
    \begin{aligned}
        \rho\|f\|_{L^{2}}^{4}\leq& \int_{\mathbb{R}^{d}}\int_{|\xi|<R}|\widehat{\mathcal{D}_{s}f}(\xi)|^{2}d\xi ds\\
        =&\int_{|\xi-\xi'|<R}\widehat{f}(\xi)|^{2}\widehat{f}(\xi')|^{2}d\xi d\xi'\\
        \leq &\|f\|_{L^{2}}^{2}\cdot \underset{B}{\operatorname{sup}}\int_{B}|\widehat{f}|^{2}\\
        \leq  &\|f\|_{L^{2}}^{2}\|\widehat{g}\|_{L^{2}}^{2}= \|f\|_{L^{2}}^{2}\|g\|_{L^{2}}^{2}
    \end{aligned}
\end{equation}
where we take $B$ to be the ball essentially realize the supremum and $\widehat{g}:=1_{B}\widehat{f}$ and $h=f-g$. Support property of $g$ and orthogonality of $h$ and $g$ is by construction. And by above estimate, we also have
\begin{equation}
    \|g\|_{L^{2}}\geq \rho^{\frac{1}{2}}\|f\|_{L^{2}}
\end{equation}
\end{proof}

The following lemma allows us to decompose a function $f$ into two parts $f = f_{\sharp} + f_{\flat}$: the part where $\wh{\mathcal{D}_s f}$ has a large $L^2$ norm, $f_{\sharp}$, and the part where $\wh{\mathcal{D}_s f}$ has a small $L^2$ norm, $f_{\flat}$.

\begin{lemma}\label{freqprunlem2}
    Let $R\geq 1$ and $\rho\in (0,1)$. Let $f\in L^{\mathbb{R}}$. There exist a decomposition

\begin{equation}
    f=f_{\sharp}+f_{\flat}
\end{equation}
with the following properties.

(1) One has
\begin{equation}
    \|f_{\sharp}\|_{L^{2}}+\|f_{\flat}\|_{L^{2}}\lesssim \|f\|_{L^{2}}
\end{equation}

(2) The function $f_{\sharp}$ admits a decomposition
\begin{equation}\label{fsharpdecomps}
    f_{\sharp}(x)=\sum_{n=1}^{N}h_{n}(x)e^{2\pi i\alpha_{n}x}
\end{equation}
with each $\alpha_{n}\in \mathbb{R}$, $h_{n}$ is a smooth function satisfying $\|\partial^{\alpha}h_{n}\|\lesssim_{\alpha}R^{\alpha}\|f\|_{L^{\infty}}$ for all integer $\alpha \geq 0$ and $\|h_{n}\|_{L^{2}}\lesssim \|f\|_{L^{2}}$, $h_{n}$ is Fourier support in $[-R,R]$, and $N\lesssim \rho^{-1}$. Moreover, the support of $\widehat{f_{\sharp}}$ is contained in the support of $\widehat{f}$.

(3) One has the bound
\begin{equation}\label{fflatprop}
     \int_{\mathbb{R}^{d}}|\widehat{\mathcal{D}_{s}f_{\flat}(\xi)}|^{2}ds\lesssim \rho\|f\|_{L^{2}}^{4}
\end{equation}
\end{lemma}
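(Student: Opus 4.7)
The plan is to iterate Lemma \ref{freqprunlem1}, peeling off successive frequency-localized chunks of positive $L^{2}$-mass, until the residual satisfies the smallness condition (3). Initialize $f_{0}:=f$, and at step $n\geq 1$ check whether
\[
\int_{\R}\int_{|\xi|\leq R}|\wh{\mathcal{D}_{s}f_{n-1}}(\xi)|^{2}\,d\xi\,ds \;\geq\; c\rho\|f\|_{L^{2}}^{4}
\]
for a suitable absolute constant $c$; if this fails, stop and set $f_{\flat}:=f_{n-1}$. Otherwise, apply a smoothed variant of Lemma \ref{freqprunlem1} to decompose $f_{n-1}=g_{n}+f_{n}$ with $\wh{g_{n}}:=\chi_{B_{n}}\wh{f_{n-1}}$, where $\chi_{B_{n}}$ is a smooth bump adapted to a ball $B_{n}$ of radius $O(R)$ centered at some $\alpha_{n}$ and identically $1$ on the maximizing ball. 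The Plancherel argument in the proof of Lemma \ref{freqprunlem1}, combined with $\|f_{n-1}\|_{L^{2}}\leq\|f\|_{L^{2}}$, furnishes the lower bound $\|g_{n}\|_{L^{2}}^{2}\gtrsim \rho\|f\|_{L^{2}}^{2}$.

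Next, I would show that the iteration halts within $N=O(\rho^{-1})$ steps. The pointwise inequality $1-(1-\chi)^{2}\geq \chi^{2}$ (valid for $\chi\in[0,1]$) combined with Plancherel yields the telescoping bound $\|g_{n}\|_{L^{2}}^{2}\leq \|f_{n-1}\|_{L^{2}}^{2}-\|f_{n}\|_{L^{2}}^{2}$, hence
\[
N\cdot c\rho\|f\|_{L^{2}}^{2} \;\leq\; \sum_{n=1}^{N}\|g_{n}\|_{L^{2}}^{2} \;\leq\; \|f\|_{L^{2}}^{2}.
\]
Setting $f_{\sharp}:=\sum_{n=1}^{N}g_{n}$, property (1) follows from the triangle inequality together with this telescoping identity, while (3) is immediate from the stopping criterion applied to $f_{\flat}$.

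For property (2), each $g_{n}$ has Fourier support in $B_{n}$, so writing $g_{n}(x)=h_{n}(x)e^{2\pi i\alpha_{n}x}$ produces an $h_{n}$ with Fourier support in $[-O(R),O(R)]$. The containment $\supp\wh{f_{\sharp}}\subseteq\supp\wh{f}$ follows by induction from $\supp\wh{g_{n}}\subseteq\supp\wh{f_{n-1}}\subseteq\supp\wh{f}$. Once the $L^{\infty}$ bound $\|h_{n}\|_{L^{\infty}}\lesssim\|f\|_{L^{\infty}}$ is secured, the derivative estimates $\|\partial^{\alpha}h_{n}\|_{L^{\infty}}\lesssim_{\alpha} R^{\alpha}\|f\|_{L^{\infty}}$ follow from Bernstein's inequality applied to $h_{n}$.

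The hard part will be securing this uniform $L^{\infty}$ bound on $h_{n}$ with a constant independent of $n$ (and hence of $\rho$). Naively, $\|g_{n}\|_{L^{\infty}}\leq \|\wc{\chi}_{B_{n}}\|_{L^{1}}\|f_{n-1}\|_{L^{\infty}}$, but the iteration $f_{n}=f_{n-1}-g_{n}$ potentially amplifies $\|f_{n-1}\|_{L^{\infty}}$ by a multiplicative constant at each step, producing an unacceptable $C^{N}$ dependence. I would circumvent this by strengthening the greedy selection to enforce approximate disjointness, e.g., requiring $2B_{n}\cap 2B_{k}=\emptyset$ for all $k<n$. Under this constraint the cutoffs $\chi_{B_{n}}$ have pairwise disjoint supports, so the factors $1-\chi_{B_{k}}$ with $k<n$ are identically $1$ on $\supp\chi_{B_{n}}$, forcing $\wh{g_{n}}=\chi_{B_{n}}\wh{f}$. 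Hence $g_{n}$ is simply a convolution of $f$ with a rescaled Schwartz kernel of $L^{1}$-norm $O(1)$, directly yielding $\|h_{n}\|_{L^{\infty}}\lesssim\|f\|_{L^{\infty}}$ uniformly in $n$. One must still verify that the stopping criterion together with this disjointness constraint leaves enough room to extract mass at each step; this reduces to an enlarged Plancherel estimate (bounding the total mass of $\wh{f_{n-1}}$ lost to excluded neighborhoods $2B_{k}$), which I would resolve by pigeonholing over a fine covering of the frequency line and absorbing the overhead into the implicit constant $c$.
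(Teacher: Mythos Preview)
Your iterative approach is correct in outline and would work, but the paper takes a cleaner, non-iterative route that sidesteps exactly the $L^{\infty}$-amplification issue you flag in your last paragraph.

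Rather than peeling off one ball at a time, the paper fixes a smooth partition of unity $\sum_{n\in\Z}\varphi(R^{-1}\xi+n)=1$ and defines, in one shot, the set $\mathfrak{R}$ of all $n$ for which some interval $I$ of length $R$ near $[-(n+1)R,-(n-1)R]$ satisfies $\int_{I}|\widehat{f}|^{2}\geq\rho\|f\|_{L^{2}}^{2}$. A direct counting argument gives $\#\mathfrak{R}=O(\rho^{-1})$, and one sets $\widehat{f_{\sharp}}:=\sum_{n\in\mathfrak{R}}\varphi_{n}(R^{-1}\xi)\widehat{f}$. Because every piece is cut directly from $f$ (not from a residual $f_{n-1}$), each $h_{n}$ is literally $f$ convolved with a fixed rescaled Schwartz kernel, so the $L^{\infty}$ and derivative bounds in (2) are immediate with constants independent of $n$. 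Property (3) is then checked by contradiction: if it failed for $f_{\flat}$, Lemma \ref{freqprunlem1} would produce an interval of length $R$ carrying mass $\gtrsim\rho\|f\|_{L^{2}}^{2}$ in $\widehat{f_{\flat}}$, but by construction every such interval meets $\operatorname{supp}\widehat{f_{\flat}}$ only where $|\widehat{f}|^{2}$ has small local mass.

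Your disjointness fix is morally the same idea (force each $g_{n}$ to be a direct Fourier cutoff of $f$), but the pigeonholing you allude to at the end is not fully worked out, and it is precisely this step that the paper's one-shot selection of $\mathfrak{R}$ makes trivial. The tradeoff: your approach is more modular (it invokes Lemma \ref{freqprunlem1} as a black box at each step), while the paper's is shorter and delivers the $L^{\infty}$ control for free.
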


\begin{proof}
    We want to decompose $\mathbb{R}$ into intervals of length $R$. To do so, consider a smooth test function $\varphi$ supported in $(-1,1)$ such that
\begin{equation}
    \sum_{n\in \mathbb{Z}}\varphi(R^{-1}\xi +n)=\sum_{n\in \mathbb{Z}}\varphi_{n}(R^{-1}\xi)=1
\end{equation}
Now we want to pick out those interval where $\widehat{f}$ has large local $L^{2}$ norm. Let $\mathfrak{R}$ be the set of all $n\in \mathbb{Z}$ for which there exists an interval $I$ of length $R$ intersecting $[(-n-1)R,(-n+1)R]$ so that 
\begin{equation}
    \int_{I}|\widehat{f}(\xi)|^{2}d\xi\geq \rho \|f\|_{L^{2}}^{2}
\end{equation}
Note that $\sharp \mathfrak{R}=O(\rho^{-1})$. If not, 
\begin{equation}
    \|f\|_{L^{2}}= \|\widehat{f}\|_{L^{2}}\geq \sum_{n\in \mathfrak{R}} \int_{I_{n}}|\widehat{f}(\xi)|^{2}d\xi >\rho^{-1} \cdot \rho\|f\|_{L^{2}}^{2}
\end{equation}
a contradiction.
We then define $f_{\sharp}$, $f_{\flat}$ by
\begin{equation}
    \widehat{f}_{\sharp}(\xi):=\sum_{n\in \mathfrak{R}}\varphi_{n}(R^{-1}\xi)\widehat{f}(\xi),\quad \widehat{f}_{\flat}(\xi):=\widehat{f}(\xi)-\widehat{f}_{\sharp}(\xi)=\sum_{n\in \mathbb{Z}\setminus \mathfrak{R}}\varphi_{n}(R^{-1}\xi)\widehat{f}(\xi)
\end{equation}
Now we show that $f_{\flat}$ satisfies the property \eqref{fflatprop}. Suppose not, apply \textit{Lemma} \ref{freqprunlem1} at level $c\rho$, we may decompose $f_{\flat}=g+b$ with $\widehat{g} $ support on an interval of length $R$ so that 
\begin{equation}
    \rho^{\frac{1}{2}}\|f\|_{L^{2}}\geq \|\widehat{f}_{\flat}1_{I}\|_{L^{2}}\geq \|g\|_{L^{2}}\geq c^{\frac{1}{2}}\rho^{\frac{1}{2}}\|f\|_{L^{2}}
\end{equation}
We again obtain a contradiction if we pick $c$ large enough. $f_{\sharp }$ is of the form
\begin{equation}
    \begin{aligned}
        f_{\sharp}(x)&=\mathcal{F}^{-1}\left(\sum_{n\in \mathfrak{R}}(\Dil^{\infty}_{R}\Tr_{-n}\varphi)\cdot \mathcal{F}f  \right)(x)\\
        &=\sum_{n\in \mathfrak{R}}\left((\Dil^{1}_{R^{-1}}\Mod_{n}\widecheck{{\varphi}})\ast f\right)(x)\\
        &=\sum_{n\in \mathfrak{R}}\int_{\mathbb{R}}f(t)Re^{2\pi i nR(x-t)}\widecheck{{\varphi}}(R(x-t))dt\\
        &=\sum_{n\in \mathfrak{R}}e^{2\pi i nR x}\mathcal{F}(f\cdot \widecheck{{\varphi}}_{R,x} )(nR)
    \end{aligned}
\end{equation}
where $\widecheck{{\varphi}}_{R,x}:=\Tr_{x}\Dil^{1}_{R^{-1}}\widecheck{{\varphi}}$ may be viewed as a $L^{1}$ bump at scale $R^{-1}$ centered around $x$. Hence $\mathcal{F}(f\cdot \widecheck{{\varphi}}_{R,x} )(nR)$ is the function $h$ in \eqref{fsharpdecomps}.

\end{proof}

\subsection{From an oscillatory integral to a sublevel set estimate} \ 

Let $\psi^{(l)}$ be a bump function with frequency support around the annulus at scale $\lambda$ and $\psi^{(3-l)}$ be a bump function with frequency support in the ball at scale $\lambda$. Then by the frequency support of $f_{l}$, we have $f_{l}=\psi^{(l)}\ast_{l}f_{l}$. Instead of merely decomposing in the spatial space, we can also decompose in the frequency space.
\begin{equation}
    f_{l}=\sum_{m\in \mathbb{Z}^{2}}\eta_{m}f_{l}=\sum_{m\in \mathbb{Z}^{2}}\eta_{m}\psi^{(l)}\ast_{l}f_{l}=\sum_{m\in \mathbb{Z}^{2}}\psi^{(l)}\ast_{l}(\eta_{m}f_{l})+\sum_{m\in \mathbb{Z}^{2}}[\eta_{m}\psi^{(l)}\ast_{l}f_{l}-\psi^{(l)}\ast_{l}(\eta_{m}f_{l})]
\end{equation}
The last step is to swap the spatial projection and the frequency projection. Since the area of the Heisenberg box is $\lambda^{-\gamma}\cdot \lambda=\lambda^{1-\gamma}$ which is larger than one, we expect the error term to be small and have enough decay to be integrable in scale. We have a pointwise bound $\lambda^{\gamma-1}$. Indeed,
\begin{equation}
    \begin{aligned}
        |\eta_{m}\psi^{(l)}\ast_{l}f_{l}-\psi^{(l)}\ast_{l}(\eta_{m}f_{l})|&=\int_{\mathbb{R}}|\psi^{(l)}(x-u)(\eta_{m}(x)-\eta_{m}(u))f_{l}(u)du|\\
        &\lesssim \int_{\mathbb{R}}|\psi^{(l)}(x-u)|(|\lambda^{\gamma}\eta_{m}'(\xi)|\cdot |x-u|) ) |f_{l}(u)|du\\
        &=\int_{\mathbb{R}}|x-u||\psi^{(l)}(x-u)||\lambda^{\gamma}\eta_{m}'(\xi)||f_{l}(u)|du\\
        &\lesssim \lambda^{-1}\cdot \lambda^{\gamma}=\lambda^{\gamma -1}
    \end{aligned}
\end{equation}
Intuitively, $\psi^{(l)}$ is now a $L^{1}$ normalized bump function supported at scale $\lambda^{-1}$, so $(x-u)$ will contribute roughly $\lambda^{-1}$. Since $\psi^{(l)}$ doesn't have a compact support, we argue carefully as follows. Let $\widehat{\psi}=D^{\infty}_{\lambda}\widehat{\phi}$, where $\phi $ is a bump with frequency support in unit ball. We have,
\begin{equation}
    \begin{aligned}
        \int_{\mathbb{R}} \verts{x^{k}\psi(x)} dx
        =
        \int_{\mathbb{R}}\verts{x^{k}\lambda \phi (\lambda x)}dx
        =
        \lambda^{-k}
        \int_{\R}
            \verts{
                \br{\lambda x}^k 
                \lambda \phi\br{\lambda x} 
            }
        dx
        =\lambda^{-k}
        \int_{\R}
            \verts{
                x^k\phi\br{x}
            }
        dx
    \end{aligned}
\end{equation}
where $ x^k\phi\br{x}$ is again a Schwartz function.

Fix $\tau=\gamma+ \kappa$ and a small parameter $\delta' >0$. Applying Lemma \ref{freqprunlem2} to the function $x\mapsto f_{1,m}(x,y)$ at the threshhold $\rho \sim \lambda^{\delta'}$ and radius $R=\lambda^{\tau}$, we obtain
\begin{equation}
    f_{1,m}=f_{1,m,\flat}+f_{1,m,\sharp}
\end{equation}
where
\begin{equation}
    f_{1,m,\sharp}(x,y)=\sum_{n=1}^{N}h_{1,n,m}e^{2\pi i \alpha_{n,m}(y)x}
\end{equation}
with $N\sim \lambda^{\delta'}$.
\begin{equation}
    \begin{aligned}
        &\int_{\mathbb{R}}\int_{\mathbb{R}^{2}}1_{|\xi_{1}|\leq \lambda^{\tau}}|\widehat{\mathcal{D}_{s}^{(1)}f_{1,m,\flat}}(\xi)|^{2}d\xi ds\\
        \lesssim &\lambda^{-\delta'}\left\|\left\|\mathcal{F}_{y}f_{1,m,\flat}   \right\|_{L^{2}_{x}}  \right\|_{L^{4}_{y}}^{4}\\
        \lesssim &\lambda^{-\delta'}\left\|\left\|\mathcal{F}_{y}f_{1,m,\flat}   \right\|_{L^{4}_{y}}  \right\|_{L^{2}_{x}}^{4}\\
        =&\lambda^{-\delta'}\left\|\left\|\mathcal{F}_{y}f_{1,m,\flat} \cdot \mathcal{F}_{y}f_{1,m,\flat}  \right\|_{L^{2}_{y}}^{\frac{1}{2}} \right\|_{L^{2}_{x}}^{4}\\
        =&\lambda^{-\delta'}\left\|\left\|\mathcal{F}_{y}(f_{1,m,\flat} \ast_{(2)} f_{1,m,\flat} )\right\|_{L^{2}_{y}}^{\frac{1}{2}} \right\|_{L^{2}_{x}}^{4}\\
        =&\lambda^{-\delta'}\left\|\left\|f_{1,m,\flat} \ast_{(2)} f_{1,m,\flat} \right\|_{L^{2}_{y}}^{\frac{1}{2}} \right\|_{L^{2}_{x}}^{4}\\
        =&\lambda^{-\delta'} \left( \lambda^{-\frac{\gamma}{4}}\cdot \lambda^{-\frac{\gamma}{2}} \right)^{4}=\lambda^{-\delta'-3\gamma}
    \end{aligned}
\end{equation}
where in the last equality, we use
\begin{equation}
    f_{1,m,\flat} \ast_{(2)} f_{1,m,\flat}(x,y)\lesssim 1_{I\times J}(x,y)\|f\|_{L^{\infty}}
\end{equation}
with $|I|\sim |J|\sim  \lambda^{-\gamma}$ and
\begin{equation}
    \left\|\left\| 1_{I\times J}(x,y)\right\|_{L^{2}_{y}}^{\frac{1}{2}} \right\|_{L^{2}_{x}}^{4}\sim \lambda^{-3\gamma}
\end{equation}
Similarly, we can decompose $f_{2,m}$. After a physical truncation, we have
\begin{equation}
    f_{l,m}=\widetilde{\eta}_{m}f_{l,m,\flat}+\widetilde{\eta}_{m}f_{l,m,\sharp}+(1-\widetilde{\eta}_{m})(\psi^{(l)}\ast_{l}(\eta_{m}f_{l}))
\end{equation}
By the mean value theorem, the third term enjoys a pointwise bound $\lambda^{\gamma -1}$. To summarize, we have
\begin{equation}
    f_{l,\flat}=\sum_{m\in \mathbb{Z}^{2}}\widetilde{\eta}_{m}f_{l,m,\flat}
\end{equation}
\begin{equation}
    f_{1,\sharp}(x,y)=\sum_{m\in \mathbb{Z}^{2}}\sum_{n=1}^{N}h_{1,n,m}e^{2\pi i \alpha_{n,m}(y)x}
\end{equation}
\begin{equation}
    f_{2,\sharp}(x,y)=\sum_{m\in \mathbb{Z}^{2}}\sum_{n=1}^{N}h_{2,n,m}e^{2\pi i \beta_{n,m}(y)x}
\end{equation}
\begin{equation}
    f_{l,\mathrm{err}}=\sum_{m\in \mathbb{Z}^{2}}f_{l,m,\mathrm{err}}
\end{equation}
where
\begin{equation}
    f_{l,m,\mathrm{err}}=\eta_{m}\psi^{(l)}\ast_{l}f_{l}-\psi^{(l)}\ast_{l}(\eta_{m}f_{l})+(1-\widetilde{\eta}_{m})(\psi^{(l)}\ast_{l}(\eta_{m}f_{l}))
\end{equation}
We have that $\| f_{l,m,err}\|_{L^{\infty}}\lesssim \lambda^{\gamma -1}$ uniformly in $m$. As such,
\begin{equation}
\begin{aligned}
    T_{\loc}(f_{1},f_{2})&= T_{\loc}(f_{1,\sharp}+f_{1,\flat}+f_{1,\mathrm{err}},f_{2,\sharp}+f_{2,\flat}+f_{2,\mathrm{err}})\\
    &=[ T_{\loc}(f_{1,\sharp},f_{2,\sharp}) ]+[ T_{\loc}(f_{1,\flat},f_{2})+ T_{\loc}(f_{1,\sharp},f_{2,\flat})]+[ T_{\loc}(f_{1,\sharp},f_{2,\mathrm{err}})+ T_{\loc}(f_{1,\mathrm{err}},f_{2})]\\
    &=T_{\sharp}+T_{\flat}+T_{\mathrm{err}}
\end{aligned}
\end{equation}
we have 
\begin{equation}
    \|T_{\mathrm{err}}\|_{L^{1}}\lesssim \lambda^{\gamma-1}
\end{equation}
\begin{equation}
    \|T_{\flat}\|_{L^{1}}\lesssim \lambda^{\frac{1}{2}-\gamma}+\lambda^{-\frac{\kappa}{4}+\frac{5}{4}\varepsilon_{2}}+\lambda^{-\frac{\delta'}{4}+\varepsilon_{2}}
\end{equation}
It thus remains to estimate the term $T_{\sharp}$. 

Since $N$ is at the scale $\lambda^{\delta'}$, we bound $\|T_{\sharp}\|_{L^{1}}$ by a sum of $O(\lambda^{2\delta'})$ terms of the form
\begin{equation}\label{Hform}
    \sum_{m\in (\mathbb{Z}^{2})^{2}}\int_{\mathbb{R}^{2}}\left|  \int_{\mathbb{R}}e^{2\pi i(\alpha_{m_{1}}(y)t+\beta_{m_{2}}(x)t^{2})}H_{m}(x,y,t)dt\right|dxdy
\end{equation}
where 
\begin{equation}
    H_{m}(x,y,t)=(\widetilde{\eta}_{m_{1}}h_{1,m_{1}})(x+t,y)(\widetilde{\eta}_{m_{2}}h_{2,m_{2}})(x,y+t^{2})\zeta (x,y,t)
\end{equation}
Let $\widetilde{\mathfrak{I}}$ be the set of $m\in (\mathbb{Z}^{2})^{2}$ such that \eqref{Hform} is nonzero. As before, $\sharp \widetilde{\mathfrak{I}}\lesssim \lambda^{3\gamma}$. For $m\in \widetilde{\mathfrak{I}}$, we fix $(\bar{x}_{m},\bar{y}_{m},\bar{t}_{m})$ in the support of $H_{m}$. For each $(x,y,t)$ in the support of $H_{m}$, we have that the t-derivative of the phase function is 
\begin{equation}
    \alpha_{m_{1}}(y)+2t\beta_{m_{2}}(x)=\alpha_{m_{1}}(y)+2\bar{t}_{m}\beta_{m_{2}}(x)+O(\lambda^{1-\gamma})
\end{equation}
Here we use $|\beta_{m_{2}}(x)|\lesssim \lambda $. Choose a small parameter $\rho$ and suppose that $(x,y,t)$ is such that
\begin{equation}
    |\alpha_{m_{1}}(y)+2\bar{t}_{m}\beta_{m_{2}}(x)|\geq \lambda^{\tau+\rho}
\end{equation}
Using $\|\partial^{N}_{t}H_{m}\|_{L^{\infty}}=O(\lambda^{\tau N})$ and integration by parts, we have
\begin{equation}
    \left| \int_{\mathbb{R}}e^{2\pi i(\alpha_{m_{1}}(y)t+\beta_{m_{2}}(x)t^{2})}H_{m}(x,y,t)dt \right| \lesssim \lambda^{\tau N}\cdot \lambda^{(-\tau-\rho) N}=\lambda^{-\rho N}
\end{equation}
for every $N>0$ (since $\rho$ is fixed, we may also choose a large enough $N$ to absorb $\rho$). Hence, it remains to bound
\begin{equation}\label{nocancel}
    \begin{aligned}
        &\sum_{m\in \widetilde{\mathfrak{I}}}\int_{\mathbb{R}^{2}}\left|  \int_{\mathbb{R}}e^{2\pi i(\alpha_{m_{1}}(y)t+\beta_{m_{2}}(x)t^{2})}H_{m}(x,y,t)dt\right|1_{ |\alpha_{m_{1}}(y)+2\bar{t}_{m}\beta_{m_{2}}(x)|\leq\lambda^{\tau+\rho}}dxdy\\
        \leq &\int_{K}\sum_{m\in \widetilde{\mathfrak{I}}}1_{(x+t,y)\in Q_{m_{1}}}1_{(x,y+t^{2})\in Q_{m_{2}}}1_{ |\alpha_{m_{1}}(y)+2\bar{t}_{m}\beta_{m_{2}}(x)|\leq\lambda^{\tau+\rho}} dxdydt
    \end{aligned}
\end{equation}
where $K$ is the compact support of $\zeta$. 

To conclude the proof of the smoothing inequality, we need the following sublevel set estimate whose proof we report to the next section. 

\begin{lemma}[Lemma 3.3 in \cite{CHRIST2021107863}]\label{lemma 3.3}
Let $K \subseteq \R^2 \times (0,\infty)$ be a compact set and $\ap, \beta: \R^2 \rightarrow \R$ measurable functions. Suppose that either $|\ap| \sim 1$ or $|\beta| \sim 1$, and $|\ap|+|\beta| \lesssim 1$. Then there exists $\sg = \sg(K)>0, C = C(K) >0$ s.t. $\forall \ep \in (0,1]$,
\begin{align}\label{eq 3.40}
    |\{ (x, y, t) \in K; |\ap(x+t, y) - 2t\beta(x,y+t^2)| \leq \ep\}| \leq C \ep^{\sg}.
\end{align}
\end{lemma}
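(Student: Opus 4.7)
The plan is a Cauchy--Schwarz doubling argument that eliminates $\alpha$ from the condition, followed by exploitation of the parabolic curvature and the size bounds to extract polynomial decay in $\epsilon$.

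Reductions: by compactness of $K$ every coordinate is bounded, and I would assume without loss of generality that $|\alpha|\sim 1$ on the relevant projection (the case $|\beta|\sim 1$ is symmetric after interchanging the roles of the two parabolic arcs). For $\epsilon$ smaller than an absolute constant, the condition $|\alpha(x+t,y) - 2t\beta(x,y+t^2)|\leq\epsilon$ combined with $|\alpha|\sim 1$ and $|\beta|\lesssim 1$ then forces $|t|\sim 1$ on the sublevel set $E_\epsilon$, since $1 \lesssim |\alpha(x+t,y)| \leq \epsilon + 2|t|\cdot|\beta(x,y+t^2)| \lesssim \epsilon + |t|$.

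Doubling to eliminate $\alpha$: change variable $x \mapsto a := x+t$ with Jacobian one, so the condition reads
\[
|\alpha(a,y) - 2t\beta(a-t, y+t^2)| \leq \epsilon,
\]
and $\alpha(a,y)$ is a constant in $t$ for fixed $(a,y)$. Writing $S_{a,y}\subset\mathbb{R}$ for the $t$-fiber of $E_\epsilon$, Cauchy--Schwarz gives $|E_\epsilon|^2 \leq |K|\int |S_{a,y}|^2\,da\,dy$, and for $t_1,t_2 \in S_{a,y}$ the difference of the two instances cancels $\alpha(a,y)$, producing
\[
|2t_1\beta(a-t_1,y+t_1^2) - 2t_2\beta(a-t_2,y+t_2^2)| \leq 2\epsilon.
\]
Thus $|E_\epsilon|^2$ is controlled by the four-dimensional measure of the set in $(a,y,t_1,t_2)$ where this inequality holds, and the unknown $\alpha$ has been eliminated.

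Exploiting the parabolic curvature: after substituting $(T,h) = (t_1+t_2,t_1-t_2)$ and $(u,v) = (a-t_1, y+t_1^2)$, the doubled condition becomes
\[
|(T+h)\beta(u,v) - (T-h)\beta(u+h,\,v - hT)| \leq 2\epsilon,
\]
in which the displacement $(h,-hT)$ between the two $\beta$-arguments sweeps out the parabolic family. The main obstacle is quantifying the resulting cancellation for merely measurable $\beta$: on the near-diagonal $|h|\ll 1$ the two values of $\beta$ agree trivially and no decay is available without further input, while for $|h|\sim 1$ the parabolic curvature must be exploited without access to derivatives of $\beta$. I would complete the proof by either (a) a second doubling in $(u,v)$ producing four values of $\beta$ at the vertices of a parallelogram whose pairs of opposite sides lie on transverse parabolic translates, so that the curvature forces a genuine lower-dimensional constraint even when $\beta$ is rough; or (b) mollifying $\beta$ at a scale $\delta$ carefully balanced against $\epsilon$, applying a classical implicit-function-type sublevel bound to the smoothed version, and absorbing the mollification error against $\|\beta\|_{L^\infty}\lesssim 1$ using the parabolic averaging already present in the first doubling. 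Either route yields $C\epsilon^{\sigma_0}$ for some explicit $\sigma_0 > 0$, and interpolating with the trivial bound $|E_\epsilon| \leq |K|$ delivers the claimed exponent $\sigma$.
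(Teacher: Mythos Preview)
Your Cauchy--Schwarz doubling correctly eliminates $\alpha$, but the proposal then stalls at the genuine difficulty and neither completion route works as sketched. Route (b) fails outright: for merely measurable bounded $\beta$, mollification at scale $\delta$ gives no $O(\delta)$ pointwise error, so there is no parameter balance to strike against $\epsilon$. Route (a) is the right instinct but underspecified and, as stated, insufficient: a second doubling leaves $\beta$ evaluated at several moving points, and you still face a sublevel estimate for an expression built from unknown $\beta$-values with no mechanism to remove them. Concretely, after your first doubling the constraint lives in the four-dimensional space $(u,v,T,h)$ and involves $\beta(u,v)$ and $\beta(u+h,v-hT)$; any vector field annihilating both arguments must kill four independent functions of four variables and is therefore trivial, so no differentiation trick is available at this stage.

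The paper supplies the missing idea. Rather than Cauchy--Schwarz, it runs a three-step method-of-refinements (tower of sets) to produce a fixed basepoint $\bar z$ and a parameter set $\mathcal A\subset I^3$ with $|\mathcal A|\gtrsim|\mathcal E|^7$, on which the three defining inequalities combine algebraically to yield $|F(\mathbf t)|\lesssim\epsilon$ for
\[
F(\mathbf t)=\alpha(\bar z)\,t_3^{-1}t_2t_1^{-1}\;-\;2\beta\bigl(\bar z+(\theta_1(\mathbf t),\theta_2(\mathbf t))\bigr),\qquad \theta_1=-t_1+t_2-t_3,\quad \theta_2=t_1^2-t_2^2+t_3^2.
\]
The crucial gain is that $\beta$ now appears at a \emph{single} point depending on only two functions $\theta_1,\theta_2$ of \emph{three} variables $\mathbf t$. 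Differentiating along $V=\nabla\theta_1\times\nabla\theta_2$ freezes $\beta$ and reduces matters to a sublevel estimate for the explicit analytic function $V\nu$ with $\nu=t_3^{-1}t_2t_1^{-1}$, handled by van der Corput together with the \L ojasiewicz inequality. Three refinement steps are exactly what is needed to drop from two $\beta$-arguments to one while keeping a spare parameter direction; two steps (your doubling) leave no such direction.
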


To conclude the proof of the smoothing inequality, note that for each $(u,v)\in \mathbb{R}^{2}$, there exists at most one $m$ with $(u,v)\in Q_{m}\in \mathcal{Q}$. This defines a measurable function
\begin{equation}
    l:\mathbb{R}^{2} \mapsto \mathbb{Z}^{2}\quad \text{with} \quad l(u,v)=m \quad if \quad (u,v)\in Q_{m}
\end{equation}
and $l(u,v)=0$ if no such $l$ exists. Then for every $(x,y,t)\in K$,
\begin{equation}
    \sum_{m\in \widetilde{\mathfrak{I}}}1_{(x+t,y)\in Q_{m_{1}}}1_{(x,y+t^{2})\in Q_{m_{2}}}1_{ |\alpha_{m_{1}}(y)+2\bar{t}_{m}\beta_{m_{2}}(x)|\leq\lambda^{\tau+\rho}}\leq 1_{|\widetilde{\alpha}(x+t,y)-t\widetilde{\beta}(x,y+t^{2})|\leq \varepsilon}
\end{equation}
where
\begin{equation}
    \widetilde{\alpha}(x+t,y):=\lambda^{-1}\alpha_{l(x+t,y)}(y),\quad \widetilde{\beta}(x,y+t^{2}):=-\lambda^{-1}\beta_{l}(x,y+t^{2})(x),\quad \varepsilon:=2\lambda^{\tau+\rho -1}
\end{equation}
Then 
\begin{equation}
    \eqref{nocancel}\lesssim |\{(x,y,,t)\in K : |\widetilde{\alpha}(x+t,y)+2t\widetilde{\beta}(x,y+t^{2})|\leq \varepsilon\}|
\end{equation}
Finally, by the sublevel set estimate in Lemma \ref{lemma 3.3}, we get the desired result. 
$\Box$

\section{A sublevel set estimate}

As mentioned in the previous section, the proof of the trilinear smoothing inequality reduces to that of a sublevel set estimate in Lemma \ref{lemma 3.3} which relies heavily on the pair $(t, t^2)$ being two linearly independent monomials.

\begin{remark}
    In a recent preprint \cite{chen2023polynomial}, Xuezhi Chen and Jingwei Guo prove sublevel set estimates where the pair $(t, t^2)$ is replaced by $(P_1(t), P_2(t))$, where $P_1, P_2$ are two linearly independent polynomials with no constant term. 
\end{remark}

\noindent\begin{proof}[Proof of Lemma \ref{lemma 3.3}] Let $z = (x, y) \in \R^2$. Consider the case $|\ap| \sim 1$. After a change of variables, $(x,y,t) \mapsto (x-t, y, t)$, an affine transformation in $z$, and a rescaling in $t$, the estimate to prove can be rewritten as
\begin{align}\label{eq 3.41}
    |\mE|:=| \{(z,t) \in K; |\ap(z) - 2t\beta(z+(-t, t^2))| \leq \ep\} | \lesssim \ep^{\tau},
\end{align}
where $K=[0,1]^2 \times I$ and $I \subseteq (0, \infty)$ is a closed interval with $|I| =1$. 

\subsection{Constructing a parameter set} \ 

Following an approach reminiscent of the method of refinements, the authors introduce a parameter set $\mA$ and reduce the original sublevel set estimate to an estimate of $\mA$. 

\begin{claim}
There exist a point $\zb = (\xb, \yb) \in \R^2$, and a measurable set $\mA \subseteq I^3$ s.t. $|\mA| \gtrsim |\mE|^7$ and for every $(t_1, t_2, t_3 ) \in \mA$,
\begin{multline}\label{eq 3.42}
    \begin{cases}
        |\ap(\zb) - 2t_1 \beta(\zb + (-t_1, t_1^2))| \leq \ep\\
        |\ap(\zb + (-t_1, t_1^2) - (-t_2, t_2^2)) - 2t_2\beta(\zb +(-t_1,t_1^2))| \leq \ep \\
        |\ap(\zb + (-t_1, t_1^2) -(t_2,t_2^2)) - 2t_3 \beta(\zb + (-t_1, t_1^2) - (-t_2, t_2^2) + (-t_3,t_3^2))| \leq \ep.
    \end{cases}
\end{multline}
\end{claim}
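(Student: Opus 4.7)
The plan is to use the method of refinements: iterated pigeonholing combined with measure-preserving changes of variable adapted to the parabolic structure. Write $\phi(t) := (-t, t^2)$, $E := |\mathcal{E}|$, and let $E_z := \{t \in I : (z,t) \in \mathcal{E}\}$ denote the $t$-fiber of $\mathcal{E}$ above $z$. The starting observation is a clean reinterpretation of conditions \eqref{eq 3.42} as $\mathcal{E}$-membership statements: setting $C := \bar z + \phi(t_1) - \phi(t_2)$, so that $C + \phi(t_2) = \bar z + \phi(t_1)$, the first condition becomes $(\bar z, t_1) \in \mathcal{E}$, the second becomes $(C, t_2) \in \mathcal{E}$, and the third becomes $(C, t_3) \in \mathcal{E}$. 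Consequently,
\[
|\mathcal{A}(\bar z)| = \iiint 1_{\mathcal{E}}(\bar z, t_1)\, 1_{\mathcal{E}}(C, t_2)\, 1_{\mathcal{E}}(C, t_3)\, dt_1 dt_2 dt_3.
\]

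I would then average over $\bar z \in [0,1]^2$. Performing the change of variable $u := C = \bar z + \phi(t_1) - \phi(t_2)$ (unit Jacobian for each fixed $(t_1, t_2)$), integrating out $t_3$ to produce $|E_u|$, and interpreting the remaining $t_1$-integral via the parabolic shear $\tilde{\mathcal{E}} := \{(w,t) : (w - \phi(t), t) \in \mathcal{E}\}$ (which satisfies $|\tilde{\mathcal{E}}| = E$), one obtains
\[
\int_{[0,1]^2} |\mathcal{A}(\bar z)|\, d\bar z = \int du\, |E_u| \int dt_2\, 1_{\mathcal{E}}(u, t_2)\, |\tilde E_{u + \phi(t_2)}|,
\]
where $\tilde E_w := \{t \in I : (w,t) \in \tilde{\mathcal{E}}\}$. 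This is the expression to be driven down to $\gtrsim E^{7}$.

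The next step is a trio of pigeonholes. Since $\int |E_u|\, du = \int |\tilde E_w|\, dw = E$, both sets $Z^* := \{u : |E_u| \gtrsim E\}$ and $\tilde Z^* := \{w : |\tilde E_w| \gtrsim E\}$ have measure $\gtrsim E$. Restricting $u$ to $Z^*$ replaces $|E_u|$ by a factor $\gtrsim E$; restricting further so that $u + \phi(t_2) \in \tilde Z^*$ replaces $|\tilde E_{u+\phi(t_2)}|$ by a factor $\gtrsim E$; finally restricting $t_2$ so that $(u, t_2) \in \mathcal{E}$ is compatible with the previous constraints produces another factor of $E$ from the fiber $E_u$. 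Carefully unwinding the arising chain of restrictions -- exploiting unit-Jacobian shears in $u$ to reshuffle the constraints between $\mathcal{E}$ and $\tilde{\mathcal{E}}$ whenever needed -- yields the lower bound $\int_{[0,1]^2} |\mathcal{A}(\bar z)|\, d\bar z \gtrsim E^{7}$. Since $\bar z$ is constrained to the bounded set $[0,1]^2$, a final pigeonhole in $\bar z$ produces one point with $|\mathcal{A}(\bar z)| \gtrsim E^{7} = |\mathcal{E}|^{7}$, as claimed.

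The principal obstacle is the bookkeeping in the three refinement steps: one must ensure that the restrictions $u \in Z^*$, $t_2 \in E_u$, and $u + \phi(t_2) \in \tilde Z^*$ all remain simultaneously feasible on a set of comparable measure, which requires that the curves $t_2 \mapsto u + \phi(t_2)$ interact non-trivially with the "thick" parts of $\tilde{\mathcal{E}}$. The quadratic nature of $\phi$ is crucial, and the balance between the three pigeonhole losses and the three $t_i$-integrations is precisely what produces the exponent $7$ rather than something worse.
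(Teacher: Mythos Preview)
Your strategy is sound and is a legitimate alternative to the paper's; both are instances of the method of refinements, but the order of operations differs.

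The paper refines \emph{before} selecting $\bar z$: it trims $\mE$ through a chain $\mE \supset \mE_1 \supset \mE_2$ (at each stage keeping only those $(z,t)$ whose relevant fiber---direct or sheared---carries at least half the mass of the current set), shows the resulting base set $\mE_2'$ is nonempty, picks $\bar z \in \mE_2'$, and then assembles $\mA$ as a tower $\{t_1\in U,\ t_2\in U_{t_1},\ t_3\in U_{t_1,t_2}\}$ with $|U|\gtrsim E^4$, $|U_{t_1}|\gtrsim E^2$, $|U_{t_1,t_2}|\gtrsim E$, whence $|\mA|\gtrsim E^7$. You instead compute $\int_{[0,1]^2}|\mA(\bar z)|\,d\bar z$ first and pigeonhole in $\bar z$ at the end. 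Your reinterpretation of the three inequalities as $(\bar z,t_1),(C,t_2),(C,t_3)\in\mE$ is exactly right and makes the structure transparent.

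Two remarks on the execution. First, the three-pigeonhole sketch (restrict $u\in Z^*$, then $u+\phi(t_2)\in\tilde Z^*$, then $t_2\in E_u$) is too vague to pin down an exponent, and the line about ``precisely what produces the exponent $7$'' is off. If one runs your integral identity carefully---restrict to $(u,t_2)\in\mE$ with $|E_u|\ge E/2$ to form $\mE_1$, shear to $w=u+\phi(t_2)$, and apply Cauchy--Schwarz to $\int_w |(\widetilde{\mE_1})_w|^2\,dw$---one actually obtains $\int|\mA(\bar z)|\,d\bar z\gtrsim E^3$, which is \emph{stronger} than $E^7$ since $E\le 1$. The simultaneous-feasibility obstacle you correctly flag is resolved by refining $\mE$ to $\mE_1$ \emph{first} (so that the shear carries a set of controlled size), not by three independent pigeonholes on $Z^*$, $\tilde Z^*$, and $E_u$; as written, nothing in your sketch guarantees those three constraints overlap on a set of the required measure. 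Second, your closing remark that ``the quadratic nature of $\phi$ is crucial'' is misplaced for this particular claim: only the unit-Jacobian property of the shears $z\mapsto z\pm\phi(t)$ is used here, and the construction of $\mA$ goes through verbatim for any continuous $\phi$. The curvature enters only in the subsequent sublevel-set analysis of the function $F$.
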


\noindent\begin{proof}[Proof of Claim] To construct the parameter set $\mA$, we first cut the edges off of $\mE$ along the $z$-plane. That is,
\begin{align*}
    \mE_0'&:= \{z \in [0,1]^2; \  |\{t \in I; (z, t) \in \mE\}| \geq  |\mE|/2 \}.
\end{align*}

\begin{center}
\includegraphics[width=4in]{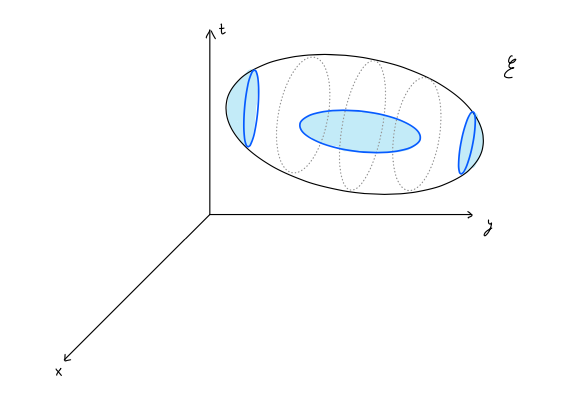}
\end{center}
After the first trim, we obtain $\mE_1$: 
\begin{align*}
    \mE_1 &:=\{(z, t) \in \mE; z \in \mE'_0 \}.
\end{align*}

In the second step, we trim out all $z$ values for which the curve $(z+(-t^2, t)) \in \mE_1$ is too short. That is, we consider
\begin{align*}
    \mE_1'&:= \{z \in \R^2; \ |\{t \in I; (z+(-t,t^2), t) \in \mE_1\}| \geq |\mE_1|/2\}.
\end{align*}

\begin{center}
    \includegraphics[width=4in]{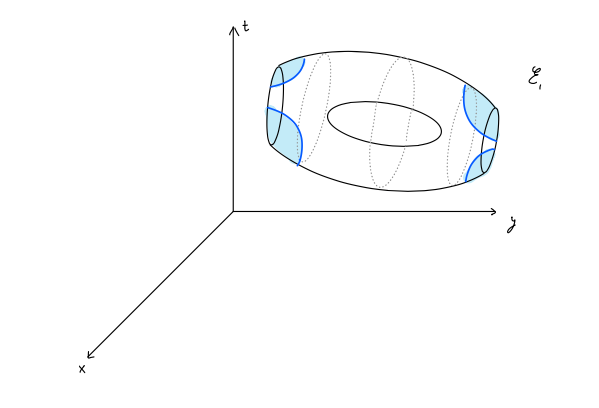}
\end{center}
After this second trim, we replace $\mE_1$ with $\mE_2$ defined by
\begin{align*}
    \mE_2& := \{(z,t) \in \mE_1; z\in \mE_1'\} = \{(z,t) \in \mE_1; (z+(-t,t^2), t) \in \mE_1\}.
\end{align*}

Finally, we trim out all $z$ values for which the curve $(z-(-t,t^2))\in \mE_2$ is too short.
\begin{align*}
    \mE_2'& := \{z\in \R^2; |\{t\in I; (z-(-t,t^2), t) \in \mE_2\}| \geq |\mE_2| /2\}.
\end{align*}
With Fubini's theorem, we can estimate the size of $\mE_0'$, $\mE_1'$, and $\mE_2'$. To estimate $\mE_0'$,
\begin{align}\label{eq 3.43}
    |\mE| = \int_{\mE_0'} |\{t\in I; (z, t) \in \mE \}| dz + \int_{[0,1]^2 \backslash \mE_0'} |\{t \in I; (z, t) \in \mE \}|dz
\end{align}
\begin{align*}
    \leq |\mE_0'| + \frac{1}{2} |\mE|.
\end{align*}
As such, $|\mE_0'| \geq \frac{1}{2} |\mE|$. 

To estimate $\mE_1'$, we first estimate the size of the set $\mE_1$ after the first trim. By Fubini again,
\begin{align}\label{eq 3.44}
    |\mE_1| = \int_{\R^2} \lp \int_I 1_{\mE}(w,t)dt \rp 1_{\mE_0'}(w)dw \geq \frac{1}{2} |\mE| \cdot |\mE_0'| \geq \frac{1}{4} |\mE|^2.
\end{align}
Hence, $|\mE_1'| \geq \frac{1}{2} |\mE_1| \geq \frac{1}{8} |\mE|^2$. In turn, following \eqref{eq 3.44}, $|\mE_2| \geq \frac{1}{2} |\mE_1| \cdot |\mE_1'| \geq 2^{-6} |\mE|^4$.

Finally, $|\mE_2'| \geq \frac{1}{2} |\mE_2| \geq 2^{-7} |\mE|^4 >0 $. Since $\mE_2' \neq \emptyset$, we can choose a point $\zb \in \mE_2'$. To construct the parameter space $\mA$ in $\R^3$, define the following three sets:
\begin{align}\label{eq 3.45}
    U := \{t\in I; (\zb -(-t,t^2), t) \in \mE_2 \}.
\end{align}
For each $t_1 \in U$,
\begin{align}\label{eq 3.46}
    U_{t_1} := \{t\in I; (\zb - (t_1, t_1^2) + (-t,t^2),t) \in \mE_1\}.
\end{align}
For each $t_1 \in U, t_2 \in U_{t_1}$,
\begin{align}\label{eq 3.47}
    U_{t_1, t_2} := \{t\in I; (\zb -(-t_1, t_1^2) + (-t_2, t_2^2), t) \in \mE\}.
\end{align}
$\mA$ is then defined by
\begin{align*}
    \mA :=\{ (t_1, t_2, t_3) \in I^3; t_1 \in U, t_2 \in U_{t_2}, t_3 \in U_{t_1, t_2}\}.
\end{align*}
To obtain a lower bound for the size of $\mA$, by Fubini's theorem, 
\begin{align*}
    |\mA| &= \int_I 1_U(t_1) \int_I 1_{U_{t_1}}(t_2) \int_I 1_{U_{t_1,t_2}}(t_3) dt_3 dt_2 dt_1 \geq |U| \cdot \inf_{t_1 \in U} |U_{t_1}| \cdot \inf_{t_1 \in U, t_2 \in U_{t_1}} |U_{t_1, t_2}|\\
    & \gtrsim |\mE_2| \cdot  |\mE_1| \cdot|\mE| \gtrsim |\mE|^4 \cdot |\mE|^2 \cdot |\mE| = |\mE|^7.
\end{align*}
Thus completing the proof of the claim. 

\end{proof}

\vspace{.2in}

The proof of the sublevel set estimate thus reduces to proving that $|\mA| \lesssim \ep^{c}$, for some $c>0$. Let $\ap_0:= \ap(\zb)$ and $\tbf =(t_1, t_2, t_3) \in \R^3$. Consider the function\footnote{Note the typo in the definition of $F$ in \cite{CHRIST2021107863} which we fixed here.} 
\begin{align*}
    F(\tbf) & = \ap_0 t_3^{-1} t_2 t_1^{-1} - 2 \beta ( \zb + (-t_1, t_1^2) - (-t_2, t_2^2) + (-t_3,t_3^2)).
\end{align*}

Recall that $I \Subset (0, \infty)$. Hence, $0<c\lesssim t_j \lesssim C$ for some $c,C>0$. As such,
\begin{align*}
    |F(\tbf)| &\lesssim |\ap_0 t_1^{-1} - 2 t_2^{-1}t_3 \beta ( \zb + (-t_1, t_1^2) - (-t_2, t_2^2) + (-t_3,t_3^2)) |\\
    &\lesssim |\ap_0 t_1^{-1} -2 \beta(\zb + (-t_1, t_1^2))| + |t_2^{-1}\ap(\zb + (-t_1, t_1^2) - (-t_2, t_2^2)) -  2 \beta(\zb + (-t_1, t_1^2))| \\
    &+ |t_2^{-1}\ap(\zb + (-t_1, t_1^2) - (-t_2, t_2^2)) - 2 t_2^{-1} t_3  \beta ( \zb + (-t_1, t_1^2) - (-t_2, t_2^2) + (-t_3,t_3^2))|\\
    & \lesssim |\ap_0  -2 t_1 \beta(\zb + (-t_1, t_1^2))| + |\ap(\zb + (-t_1, t_1^2) - (-t_2, t_2^2)) -  2 t_2\beta(\zb + (-t_1, t_1^2))| \\
    &+ |\ap(\zb + (-t_1, t_1^2) - (-t_2, t_2^2)) - 2 t_3  \beta ( \zb + (-t_1, t_1^2) - (-t_2, t_2^2) + (-t_3,t_3^2))|.
\end{align*}
By \eqref{eq 3.42}, $|F(\tbf)| \lesssim \ep$ for every $\tbf \in \mA$. The proof of \eqref{eq 3.41} reduces to proving that the sublevel sets of the function $F$ are small \textit{uniformly in all measurable functions} $\beta$. 

\subsection{Geometric tools: changes of coordinates} \ 

To simplify the notation, define the following functions
\begin{align*}
    \theta_1(\tbf) = -t_1 +t_2 - t_3 \hspace{.3in} \theta_2(\tbf) = t_1^2 - t_2^2 + t_3^2 \hspace{.3in}  \nu(\tbf) = t_3^{-1} t_2 t_1^{-1}.
\end{align*}
With this new notation, observe that $F(\tbf) = \ap_0 \nu(\tbf) -\beta (\zb +(\theta_1(\tbf), \theta_2(\tbf)))$. 

By evaluating $F$ along the flow of a carefully chosen vector field $V$ in $\R^3$, we obtain bounds uniform in all measurable functions $\beta$. The vector field of interest is given by:
\begin{equation*}
    V(\tbf) = (\nabla \theta_1 \times \nabla \theta_2) (\tbf) = 2(t_3-t_2) \p_{t_1} + 2(t_3 - t_1) \p_{t_2} + 2 (t_2 - t_1) \p_{t_3}.
\end{equation*}
Note that $V$ only vanishes on a set of measure zero: the diagonal line $\Delta = \{(t_1, t_2, t_3) ; t_1 = t_2 = t_3\} \subseteq \R^3$. With $\g : \R^3 \times \R \rightarrow \R^3$ denoting the flow associated to $V$, given $\tbf \in \R^3$, 
\begin{equation*}
    \frac{d}{ds} F(\g(\tbf, s)) = (VF)(\g(\tbf, s)) = \ap_0V( \nu(\g(\tbf, s)) - \beta(\zb +(\theta_1(\g(\tbf, s)), \theta_2(\g(\tbf, s))),
\end{equation*}
where $\theta_1, \theta_2$ are constant along the integral curves of the vector field $V = \nabla \theta_1 \times \nabla \theta_2$. Indeed, $\frac{\p}{\p t_j} \theta_l(\g(\tbf)) = \p_{t_j} \theta_l(\g(\tbf)) \cdot \nabla \theta_1 \times \nabla \theta_2 = 0$ for $l=1, 2$ and $j=1, 2, 3$. As such, $\beta(\zb +(\theta_1(\g(\tbf, s)), \theta_2(\g(\tbf, s)))$ is also constant. We can thus write
\begin{equation*}
    s \mapsto F(\g(\tbf, s)) = \ap_0 \nu(\g(\tbf,s)) - \textrm{constant},
\end{equation*}
where first part is nonconstant and non-zero away from a set of measure zero. Indeed,
\begin{align}\label{eq 3.48}
    V \nu(\tbf) = 2t_1^{-2} t_3^{-2} ((t_2-t_3)t_3 t_2 + (t_3 - t_1) t_3 t_1 + (t_1- t_2) t_2 t_1),
\end{align}
where $t_j \gtrsim 1$ for $j=1, 2, 3$. To proceed with the proof, we change coordinates three times to obtain the desired estimate.

First consider the coordinates $\mathbf{u} = J \tbf$ with $J \in \R^{3 \times 3}$ given by
\begin{align*}
    (u_1, u_2, u_3) = \begin{pmatrix}
        -1 & 1&0\\
        0&1&-1\\
        1& -1 & 1
    \end{pmatrix} \begin{pmatrix}
        t_1 \\ t_2 \\ t_3
    \end{pmatrix}  = (t_2 - t_1, t_2 - t_3, t_1 - t_2 + t_3),
\end{align*}
where observe that $|\det J| =1$. Let $\Omega := J(I^3) \subseteq [-1,1]^2 \times (I- I +I)$. After decomposing $\Omega$ into subsets:
\begin{align*}
    \Omega_{k, d} = \{\textbf{u} \in \Omega; 2^{-k-1} \leq \max (|u_1|, |u_2|) \leq 2^{-k},\ \  |u_3 - 2^{-k}d| \leq 2^{-k} \},
\end{align*}
where $k,d \in \Z$, we have
\begin{align*}
    1_{\Omega} \leq \sum_{k=0}^{\infty} \sum_{|d| \lesssim 2^k} 1_{\Omega_{k, d}}.
\end{align*}
Using this initial change of coordinates, we obtain
\begin{align*}
    |\mA| = \int_{\R^3} 1_{\mA} = \int_{\R^3} 1_{\Om \cap J \mA} \leq \sum_{k = 0 }^{\infty} \sum_{|d| \leq 2^k} |\Om_{k,d} \cap J \mA|.
\end{align*}
To estimate $|\Om_{k,d} \cap J \mA|$ for a given $k \geq 0$ and $|d| \lesssim 2^k$, we proceed with a second change of variables:
\begin{align*}
    \textbf{v} = (v_1, v_2, v_3) = 2^k (u_1, u_2, u_3 - 2^{-k} d) = \Lb_{k,d} \textbf{u}.
\end{align*}
As such, $\Om_{k, d}$ maps into a normalized box
\begin{align*}
    \Lb_{k,d} \Om_{k,d} \subseteq  \Box = \{\textbf{v} = (v_1, v_2, v_3); 2^{-1} \leq \max (|v_1|, |v_2|) \leq 1, \ \ |v_3| \leq 1 \}.
\end{align*}
The vector field $V$ in the normalized coordinates is given by
\begin{align*}
    \frac{1}{2} \wt{V}(\textbf{v}) &= \frac{1}{2} ((\Lb_{k,d} \circ J)_* V) (\textbf{v})  =\frac{1}{2} d(\Lb_{k,d} \circ J)_{(\Lb_{k,d} \circ J)^{-1}(v)} V((\Lb_{k,d} \circ J)^{-1}v) \\
    &= v_1 \p_{v_1} -v_2 \p_{v_2}.
\end{align*}
To see, by unpacking the changes of coordinates, we see that
\begin{equation}\label{eq J inv lb inv v}
\begin{aligned}
     J^{-1} \circ \Lb_{k,d}^{-1} v & = \begin{pmatrix}
        0&1&1\\
        1&1&1\\
        1&0&1
    \end{pmatrix} \begin{pmatrix}
        2^{-k} v_1 & 2^{-k} v_2 & 2^{-k}(v_3 + d)
    \end{pmatrix}\\
    &= \begin{pmatrix}
        2^{-k} v_2 + 2^{-k} (v_3 + d) \\
        2^{-k} v_1 + 2^{-k} v_2 + 2^{-k} (v_3 + d)\\
        2^{-k} v_1 + 2^{-k} (v_3 + d)
    \end{pmatrix}.
    \end{aligned}
\end{equation}
In addition,
\begin{align*}
    \frac{1}{2} d(\Lb_{k,d} \circ J)_{(\Lb_{k,d} \circ J)^{-1}(v)} V((\Lb_{k,d} \circ J)^{-1}v) &= \begin{pmatrix}
        2^k & 0& 0\\
        0 & 2^k & 0 \\
        0 & 0 &2^k
    \end{pmatrix} \begin{pmatrix}
        -1 & 1 & 0 \\
        0 & 1 & -1\\
        1 & -1 &1
    \end{pmatrix}
    \begin{pmatrix}
        -2^{-k} v_2 \\ 2^{-k}v_1 -2^{-k} v_2\\ 2^{-k }v_1
    \end{pmatrix}\\
    &= \begin{pmatrix}
        v_1 \\ -v_2 \\ 0
    \end{pmatrix}.
\end{align*}
To compute the integral curves $\g(s) = (v_1(s), v_2(s))$, we solve the following ODE:
\begin{align*}
    \begin{cases}
        v_1'(s) =2v_1(s) \\
        v_2'(s) = -2v_2(s)
    \end{cases}
\end{align*}
and obtain
\begin{align*}
    \g: \R^3 \times \R \rightarrow \R^3, \hspace{.3in} \g(c; s) = (c_1 e^{2s}, c_2e^{-2s}, c_3).
\end{align*}
Setting $c_l = \pm 1$, for $l =1,2$, we obtain a foliation of the half-space $(\pm v_l >0)$ by one-dimensional integral curves.

Finally, the third change of variables is given by:
\begin{align*}
    &\vp_{1, \pm}: \R^3 \rightarrow \{\pm v_1 >0\}, \hspace{.3in} (w_1, w_2, w_3) \mapsto \g((\pm 1, w_2, w_3); w_1), \\
    &\vp_{2, \pm}: \R^3 \rightarrow \{\pm v_2 >0\} \hspace{.3in} (w_1, w_2, w_3) \mapsto \g((w_2, \pm 1, w_3); w_2).
\end{align*}
As such, $\vp_{l,\pm}$ is a diffeomorphism and $\det D \vp_{l,\pm} = \pm 2$.

Decomposing $\Box$ into four boxes, 
\begin{align*}
    \Box = \bigcup_{l = 1, 2; \pm } \Box_{l,\pm}, \hspace{.5in} \Box_{l,\pm} = \Box \cap \{\pm v_{l} \geq 2^{-1}\}
\end{align*}
we obtain compactness in the $\textbf{w}$-coordinates. For example, after retracing the changes of variables, we obtain that for $(v_1, v_2, v_3) \in \Box_{1,+}$ (so that $v_1 \geq 2^{-1}$), 
\begin{align*}
    \vp_{1,+}^{-1} (v_1, v_2, v_3) \in [ 1/2 \log (1/2), 0] \times [-1, 1] \times [-1, 1].
\end{align*}

Going back to estimating $|\Om_{k, d} \cap J \mA|$, keeping in mind that $\vp_{l,\pm}$ is a diffeomorphism with Jacobian determinant equal to $\pm 2$, and recalling the decomposition into boxes $\Lb_{k,d} \Om_{k,d} \subseteq \Box = \bigcup_{l =1,2} \Box_{l,\pm}$, we have
\begin{align*}
    |\Om_{k, d} \cap J \mA| &= 2^{-3k} |\Lb_{k,d} \Om_{k,d} \cap \Lb_{k, d} J \mA |\\
    & = 2^{-3k +1} \sum_{l \in \{1,2\}, \pm  } |\vp_{l, \pm}^{-1} (\Lb_{k,d} \Om_{k,d} \cap \Box_{l,\pm} ) \cap \vp_{l, \pm}^{-1} \Lb_{k, d} J \mA|.
\end{align*}
Letting
\begin{align*}
    G_{l, \pm,k,d}:= F \circ J^{-1} \circ \Lb_{k,d}^{-1} \circ \vp_{l, \pm},
\end{align*}
where $F$ is the original map whose sublevel sets we wish to estimate, $J$ diagonalizes the system given by the vector field $V$, $\Lb_{k,d}$ normalizes the coordinates and finally $\vp_{l,\pm}$ gave rise to compact sets. Let $\mK_{l, \pm, k, d} := \vp_{l, \pm}^{-1} (\Lb_{k,d} \Om_{k,d} \cap \Box_{l,\pm} )$ so that  $ \mK_{l, \pm, k, d} \subseteq \vp_{l,\pm}^{-1}(\Box_{l,\pm})$ is a compact set. We thus have,
\begin{align*}
    |\Om_{k, d} \cap J \mA| & \lesssim 2^{-3k+1} \sum_{l \in \{1, 2\}, \pm } |\{w \in \mK_{l, \pm, k, d}; |G_{l, \pm,k,d}(w)| \lesssim \ep \}|.
\end{align*}
Indeed, for $w \in \vp_{l, \pm}^{-1} \Lb_{k, d} J \mA$, tracing back the changes of coordinates, we have
\begin{align*}
    |G_{l, \pm,k,d}(w)| = |F \circ J^{-1} \circ \Lb_{k,d}^{-1} (v)| = |F \circ J^{-1} (u)| = |F(t)| \lesssim \ep.
\end{align*}

To apply Van der Corput's lemma, first note that the first partial of $G_{l, \pm, k, d}$ is given by
\begin{align*}
    \p_{w_1}G_{l, \pm, k ,d}(w) = \p_{w_1} F \circ J^{-1} \circ \Lb_{k,d}^{-1} \circ \vp_{l, \pm} (w),
\end{align*}
where $\vp_{l,\pm}$ are defined in terms of the integral curve of $\wt{V}$ and $F$ is thus evaluated along the integral curve of $V$, as before, after two additional changes of coordinates. Hence,
\begin{align}\label{eq 3.49}
    \p_{w_1}G_{l, \pm, k ,d}(w) = \ap_0 \wt{V}(\nu \circ J^{-1} \circ \Lb_{k,d}^{-1})|_{\vp_{l, \pm} (w)}.
\end{align}

We detail the computation to show that $\wt{V}(\nu \circ J^{-1} \circ \Lb_{k,d}^{-1})|_{\textbf{v}}$ can be written as a product of a scalar multiple of a homogeneous polynomial of degree three with a term that is approximately unimodular. 
By \eqref{eq J inv lb inv v},
\begin{align*}
    J^{-1} \circ \Lb_{k,d}^{-1} (v) =
    \begin{pmatrix}
        2^{-k} (v_2 + v_3 +d) \\
        2^{-k} ( v_1 + v_2 + v_3 + d) \\
        2^{-k} ( v_1 +v_3 +d)
    \end{pmatrix}.
\end{align*}
As such,
\begin{align*}
    \nu \circ J^{-1} \circ \Lb_{k,d}^{-1} (v) = (2^{-k} ( v_1 +v_3 +d))^{-1} (2^{-k} ( v_1 + v_2 + v_3 + d)) (2^{-k} (v_2 + v_3 +d))^{-1}.
\end{align*}
Applying the vector field $\wt{V}$ to the above function, we have
\begin{align*}
    \wt{V}(\nu \circ J^{-1} \circ \Lb_{k,d}^{-1})|_{\textbf{v}}
    & = 2^{2k}(v_1 \p_{v_1} - v_2 \p_{v_2}) ( v_1 +v_3 +d)^{-1} ( v_1 + v_2 + v_3 + d) (v_2 + v_3 +d)^{-1}\\
    &= v_1 (2^{-k} (v_1 + v_3 +d))^{-2} - v_2 (2^{-k} (v_2+v_3+d))^{-2}\\
    &= 2^{-2k} (2^{-k} (v_1 + v_3 +d))^{-2} (2^{-k} (v_2+v_3+d))^{-2} \\
    & \hspace{1in}(v_1  (v_2+v_3+d)^2 - v_2 (v_1 + v_3 +d)^{2})\\
    &= 2^{-2k} R_{k,d}(\textbf{v}) P(\textbf{v}),
\end{align*}
where $P$ is a homogeneous polynomial of degree 3 that is independent of $k$ and does not vanish identically, and
\begin{align}\label{eq 3.50}
    R_{k,d}(\textbf{v}) = (2^{-k}(v_2+ v_3+d))^{-2} (2^{-k}(v_1 +v_3+d))^{-2}.
\end{align}
If $\textbf{v} \in \Lb_{k,d}\Om_{k,d} \subseteq \Lb_{k,d} J(I^3)$, then $|R_{k,d}(\textbf{v})| \sim 1$. This can easily be seen by retracing the changes of variables. Hence, if $w \in \mK_{l,\pm,k,d}$
\begin{align}\label{eq 3.51}
    |\p_{w_1}G_{l, \pm, k ,d}(w)| \sim 2^{-2k}|P(\vp_{l,\pm}(w))|.
\end{align}

By the multivariable van der Corput's lemma, there exists $a>0$ s.t. 
\begin{align}\label{eq 3.52}
    |\{w \in \mK_{l, \pm, k, d}; |P(\vp_{l, \pm}(w))| \leq \ep\}| \lesssim \ep^a.
\end{align}

\vspace{.3in}

\subsection{Analyticity considerations: one last sublevel set estimate} \

\begin{claim}
\eqref{eq 3.52} implies 
\begin{align}\label{eq 3.53}
    |\{w \in \mK_{l,\pm, k,d};|G_{l, \pm ,k,d}(w)| \lesssim \ep \}| \lesssim 2^{Ck} \ep^c
\end{align}
uniformly in $k, d$, where $c, C>0$ are some fixed constants.
\end{claim}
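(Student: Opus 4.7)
The plan is to decompose the sublevel set $\mE_\ep := \{w \in \mK_{l,\pm,k,d} : |G_{l,\pm,k,d}(w)| \lesssim \ep\}$ according to the size of $|P(\varphi_{l,\pm}(w))|$, and balance a small-$P$ bound coming from the van der Corput estimate \eqref{eq 3.52} against a large-$P$ bound coming from a slicewise one-dimensional sublevel set argument in the flow direction $w_1$. The point of this decomposition is that $\p_{w_1} G_{l,\pm,k,d}$ is, up to a harmless positive factor, exactly $\ap_0 \cdot 2^{-2k} P(\vp_{l,\pm}(w))$, so size of $P$ on $\vp_{l,\pm}(w)$ is the only obstruction to a good lower bound on the derivative of $G_{l,\pm,k,d}$ along the flow.

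Fix a threshold $\delta \in (0,1]$ to be optimized at the end, and split $\mE_\ep = A_{\mathrm{sm}} \cup A_{\mathrm{lg}}$ with $A_{\mathrm{sm}} := \mE_\ep \cap \{|P\circ \vp_{l,\pm}| \leq \delta\}$. By \eqref{eq 3.52} applied with $\ep$ replaced by $\delta$, one immediately has $|A_{\mathrm{sm}}| \lesssim \delta^a$.

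For $A_{\mathrm{lg}}$ we fix $(w_2, w_3)$ and examine the $w_1$-slice. By \eqref{eq 3.51}, combined with the hypothesis $|\ap_0| \sim 1$ and the fact that $R_{k,d} \sim 1$ on the image of $\vp_{l,\pm}$ restricted to $\mK_{l,\pm, k,d}$, one gets $|\p_{w_1} G_{l,\pm,k,d}(w)| \gtrsim 2^{-2k} \delta$ wherever $|P \circ \vp_{l,\pm}|(w) > \delta$. The sign of $\p_{w_1} G_{l,\pm,k,d}$ coincides (up to the fixed sign of $\ap_0$) with that of $P \circ \vp_{l,\pm}$. Concretely $\vp_{1,+}(w) = (e^{2w_1}, w_2 e^{-2w_1}, w_3)$, with analogous formulas for the other three choices of $(l, \pm)$, so since $P$ is a fixed homogeneous polynomial of degree $3$, the map $w_1 \mapsto P(\vp_{l,\pm}(w_1, w_2, w_3))$ is a Laurent polynomial in $e^{2w_1}$ of degree at most six. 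Hence it has at most six real zeros, with a bound independent of $(w_2, w_3)$, $k$, and $d$. Therefore the $w_1$-section of $\mK_{l,\pm,k,d}$ at $(w_2,w_3)$ splits into $O(1)$ subintervals on which $\p_{w_1} G_{l,\pm,k,d}(\cdot, w_2, w_3)$ has constant sign and magnitude $\gtrsim 2^{-2k}\delta$. Applying the elementary one-variable sublevel set lemma for monotone $C^1$ functions on each piece and summing, each slice contributes one-dimensional measure $\lesssim 2^{2k} \ep/\delta$.

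Integrating this slice bound over the bounded $(w_2, w_3)$-projection of the compact set $\mK_{l,\pm,k,d}$ (whose measure is $O(1)$ by construction of $\vp_{l,\pm}$ on $\Box_{l,\pm}$) yields $|A_{\mathrm{lg}}| \lesssim 2^{2k}\ep/\delta$. Combining the two contributions gives
\begin{equation*}
    |\mE_\ep| \lesssim \delta^a + 2^{2k}\ep/\delta,
\end{equation*}
which, optimized by the choice $\delta = (2^{2k}\ep)^{1/(a+1)}$, produces $|\mE_\ep| \lesssim (2^{2k}\ep)^{a/(a+1)} \lesssim 2^{Ck} \ep^c$ with $C = 2a/(a+1)$ and $c = a/(a+1) > 0$, uniformly in $k$, $d$, and in the measurable function $\beta$, as required. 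The main obstacle is the control on the sign changes of $\p_{w_1} G_{l,\pm,k,d}(\cdot, w_2, w_3)$: without a uniform bound the slicewise monotone sublevel lemma breaks down. It is precisely the identification of $\tilde V (\nu \circ J^{-1}\circ \Lb_{k,d}^{-1})$ as $2^{-2k}$ times a fixed homogeneous polynomial $P$ of degree three — which survives the exponential substitution $\vp_{l,\pm}$ as a Laurent polynomial of bounded degree in $e^{2w_1}$ — that furnishes this bounded zero-count and unlocks the estimate.
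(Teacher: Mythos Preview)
Your argument is correct and takes a genuinely different, more elementary route than the paper's. The paper invokes the \L ojasiewicz inequality for the real-analytic function $P\circ\vp_{l,\pm}$: it covers $\mK_{l,\pm,k,d}$ by a grid of $\rho$-cubes, sorted into those near the zero set $\mZ$ of $P\circ\vp$ (handled by \eqref{eq 3.52}), those far from $\mZ$ (where \L ojasiewicz gives $|P\circ\vp|\gtrsim\rho^b$, hence a uniform lower bound on $|\p_{w_1}G|$ on each cube, and a one-dimensional sublevel bound applies cube by cube), and boundary cubes (trivial). You instead exploit the very concrete structure at hand: since $P$ is a fixed homogeneous cubic and $\vp_{l,\pm}$ substitutes $(e^{\pm 2w_1}, c\,e^{\mp 2w_1})$ into two of its arguments, the map $w_1\mapsto P(\vp_{l,\pm}(w_1,w_2,w_3))$ is a Laurent polynomial of uniformly bounded degree in $e^{2w_1}$, so $\{|P\circ\vp|>\delta\}$ is a union of $O(1)$ $w_1$-intervals and $\p_{w_1}G$ has $O(1)$ sign changes. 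This replaces the \L ojasiewicz machinery by a direct zero-count and yields slightly better exponents. The paper's approach, on the other hand, would generalize without change to any real-analytic $P\circ\vp$, where no such finite zero-count is available a priori.

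One small point to tighten: your sentence ``the $w_1$-section of $\mK_{l,\pm,k,d}$ splits into $O(1)$ subintervals on which $\p_{w_1}G$ has constant sign and magnitude $\gtrsim 2^{-2k}\delta$'' conflates two sets. The magnitude bound only holds on $\{|P\circ\vp|>\delta\}$, and $G$ itself has up to two poles in $w_1$ (where $t_1$ or $t_3$ vanishes) which lie outside $\mK_{l,\pm,k,d}$ but possibly inside your interval $I$. Both are harmless: the poles contribute only $O(1)$ additional interval endpoints, and since on each resulting monotone piece $G$ is a bijection, the change of variables $y=G(w_1)$ gives $|\{w_1\in I: w_1\in\mK^{(w_2,w_3)},\,|G|\le\ep\}|\le\int_{-\ep}^{\ep}1_{\mK}(G^{-1}(y))\,|G'(G^{-1}(y))|^{-1}\,dy\lesssim 2^{2k}\ep/\delta$ without needing $\mK^{(w_2,w_3)}$ itself to be an interval.
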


\hspace{.3in}

Before proving the claim, we walk through the conclusion of the proof of Lemma \ref{lemma 3.3}. By \eqref{eq 3.53}, we have
\begin{align*}
    |\Om_{k,d} \cap J \mA| \lesssim 2^{Ck} \ep^c.
\end{align*}
Recalling the trivial estimate $|\Om_{k,d} \cap J \mA| \lesssim 2^{-3k}$, we have
\begin{align*}
    |\mA| \lesssim \sum_{k=0}^{\infty} \sum_{|d| \leq 2^k} \min (2^{Ck} \ep^c, 2^{-3k})\approx \sum_{k=0}^{\infty} 2^k \min (2^{Ck} \ep^c, 2^{-3k}) \lesssim \ep^{c'}.
\end{align*}
As such, $|\mE| \lesssim |\mA|^{1/7} \lesssim \ep^{c'/7}$. Thus proving Lemma \ref{lemma 3.3}.
\end{proof}

\hspace{.5in}

\noindent\begin{proof}[Proof of Claim]
The authors focus on the case $(l, \pm) = (1, +)$ since the other three cases follow in a similar way. The authors introduce the ${\L}$ojasiewicz inequality\footnote{\textbf{$\L$ojasiewicz inequality} - Let $f:U \rightarrow \R$, where $U \subseteq \R^n$ is open, be a real analytic function. Let $\mZ$ denote the zero set of $f$. Assume $\mZ \neq \emptyset$. Then for all compact $K \Subset U$, $\exists \ap, C>0$ s.t. $\forall x\in K$, \begin{equation*}
    \textrm{dist}(x,\mZ)^{\ap} \leq C |f(x)|.
\end{equation*}} which can only be applied to an \textit{analytic} function. However $G_{k,d}$ is only analytic in one variable $w_1$. Indeed,
\begin{align*}
    G_{k,d} (w_1, w_2, w_3) &= F \circ J^{-1} \circ \Lb_{k,d}^{-1} \circ \vp_{1,+}(w_1, w_2, w_3) \\
    &=F \circ J^{-1} \circ \Lb_{k,d}^{-1} (e^{2w_1}, w_2 e^{-2w_1}, w_3).
\end{align*}
On the other hand, recalling the calculation above, $\p_{w_1} G_{k,d} = 2^{-2k} R_{k,d}(\gamma((+,w_2, w_3); w_1)) \\ P(\gamma((+,w_2, w_3); w_1))$ is analytic in all three variables $(w_1, w_2, w_3)$ as desired. We thus apply the $\L$ojasiewicz inequality to $\p_{w_1} G_{k, d}$.

In addition, in view of carefully recording how the constants depend on $k$, thanks to \eqref{eq 3.51}, we apply the $\L$ojasiewicz inequality to $P \circ \vp$ instead of $\p_1 G_{k,d}$, keeping in mind that \eqref{eq 3.51} only holds on $\mK_{k,d}$. 

As observed earlier, $\mK_{k,d} \subseteq \mK = [1/2 \log (1/2), 0] \times [-1,1]^2$. Let $\mK^*$ denote an open $2^{-10}$-neighborhood of $\mK$ and consider the zero set
\begin{align*}
    \mathcal{Z}:= \{w \in \mK^*; P(\vp(w)) = 0\}.
\end{align*}
The $\L$ojasiewicz inequality gives an upper bound for the distance between points in the compact set $\mK$ and the zero set of the real analytic function $P(\vp(w))$: there exists $b>0$ s.t. for $w \in \mK$, 
\begin{align}\label{eq 3.54}
    |P(\vp(w))| \gtrsim \textrm{dist}(w,\mZ)^b.
\end{align}

To obtain the sublevel set estimate for $G_{1,+,k,d}$, we cover $\mK_{k,d} = \vp^{-1}_{1,+} (\Lb_{k,d}\Om_{k,d} \cap \Box_{1,+})$ with a grid of closed, axis-aligned cubes $Q \subseteq \mK$ with pairwise disjoint interiors, each of sidelength $\rho$, where $\ep$ is sufficiently small and $\rho$ is such that $\ep << \rho << 1$ and will determined at the end of the computations. Let $\mQ$ denote the collection of all these cubes. The cubes are then distributed into three subcollections:
\begin{align*}
    \mQ = \mQ_{\textrm{near}} \cup \mQ_{\textrm{far}} \cup \mQ_{\textrm{bdry}}
\end{align*}
where
\begin{align*}
    \mQ_{\textrm{near}} &= \{Q\in \mQ; Q \subseteq \mK_{k,d}, \textrm{dist}(Q,\mZ) \leq \rho \}, \\
    \mQ_{\textrm{far}} &= \{Q\in \mQ; Q \subseteq \mK_{k,d}, \textrm{dist}(Q,\mZ) > \rho\}, \\
    \mQ_{\textrm{bdry}}&=\{Q\in \mQ ; Q \cap \p \mK_{k,d} \neq \emptyset \}.
\end{align*}

If $Q \in \mQ_{\textrm{near}}$, by the mean value theorem, $|P(\vp(w))| \lesssim \rho$, $\forall w \in Q$. By \eqref{eq 3.52}, 
\begin{align*}
    |\bigcup_{\mQ_{\textrm{near}}}Q| \lesssim \rho^a.
\end{align*}

If $Q \in \mQ_{\textrm{far}}$, by \eqref{eq 3.54}, $|P(\vp(w))| \gtrsim \rho^b$, $\forall w \in Q$. In turn, by \eqref{eq 3.51} noting that $Q \subseteq \mK_{k,d}$, we obtain a lower bound on the derivative of $G_{k,d}$: $|\p_{w_1}G_{k,d}(w)| \gtrsim 2^{-3k} \rho^b$. By van der Corput's lemma again, $\forall (w_2, w_3) \in [-1,1]^2$, 
\begin{align*}
    |\{w_1 \in \R; (w_1, w_2, w_3) \in Q; |G_{k,d}(w)| \leq \ep\}| \lesssim 2^{3k} \ep \rho^{-b}
\end{align*}
with implicit constant independent of $k,d,w_2,w_3$. By Fubini, integrating over the two missing variables $w_2, w_3$, we have
\begin{align*}
    |\{w \in Q; |G_{k,d}(w)| \leq \ep\}| \lesssim 2^{3k} \ep \rho^{2-b}.
\end{align*}
Since there are $O(\rho^{-3})$ cubes, 
\begin{align*}
    \bigcup_{Q \in \mQ_{\textrm{far}}} |\{w \in Q; G_{k,d}(w)| \leq \ep\}| \lesssim 2^{3k} \ep \rho^{-1-b}.
\end{align*}

We apply the trivial estimate to the third subcollection $\mQ_{\textrm{bdry}}$ since it only contains $O(\rho^{-2})$ cubes. That is,
\begin{align*}
    |\bigcup_{Q \in \mQ_{\textrm{bdry}}}Q|\lesssim \rho^{3} \rho^{-2} = \rho.
\end{align*}

Combining all three sublevel sets estimates for $G_{k,d}$, and letting $\rho = \ep^{c'}$, for some $c'>0$ such that
\begin{align*}
    |\{w \in \mK; |G_{k,d}(w)|\leq \ep\}| &\lesssim \rho^a + 2^{3k} \ep \rho^{-1-b} + \rho\\
    & \lesssim 2^{Ck} \ep^c.
\end{align*}
    
\end{proof}

\newpage

\section{Applications}

\subsection{Application to Bilinear Maximal Type Operator}
~\\
In this section, we introduce a close sibling of the variant of the triangular Hilbert transform:
\begin{equation}
    \mathscr{M}\br{f_1,f_2}\br{x,y}
    :=
    \sup_{\epsilon>0}
        \fint_{-\epsilon}^\epsilon
            \verts{f_1}\br{x+t,y}
            \verts{f_2}\br{x,y+t^2}
        dt
\end{equation}
We aim to prove the following:
\begin{theorem}[Theorem 3 \cite{CHRIST2021107863}]\label{thm_bilinear_maximal}
    For every \(p,q\in\br{1,\infty}\) and \(r\in\left[1,\infty\right)\) with \(\frac{1}{r}=\frac{1}{p}+\frac{1}{q}\) we have:
    \begin{equation}
        \Verts{\mathscr{M}\br{f_1,f_2}}_{L^r}
        \lesssim
        \Verts{f_1}_{L^p}
        \Verts{f_2}_{L^q}.
    \end{equation}
\end{theorem}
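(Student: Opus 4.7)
The plan is to reduce to the positive case $f_1, f_2 \geq 0$ (which is immediate by monotonicity of the supremum) and then split the argument into two regimes according to whether $r>1$ or $r=1$. The division is forced: the elementary H\"older-type pointwise domination by classical maximal functions closes up precisely for $r>1$, while the endpoint $r=1$ requires the structural information carried by Theorem~\ref{thm 1 - THT Lp} and the smoothing inequality.

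For $r>1$, the plan is to pick an auxiliary exponent $a$ satisfying $q' < a < p$ (possible exactly when $1/p+1/q<1$, i.e.\ $r>1$) with conjugate $a'$, and apply H\"older's inequality inside the $t$-average to obtain the pointwise bound
\begin{equation*}
\mathscr{M}(f_1,f_2)(x,y) \lesssim \br{\mathcal{M}^{(1)}(f_1^a)(x,y)}^{1/a} \br{\mathcal{M}^{(2)}(f_2^{a'})(x,y)}^{1/a'},
\end{equation*}
where $\mathcal{M}^{(l)}$ denotes the one-dimensional Hardy-Littlewood maximal in the $l$-th variable, applied slice-wise. The first factor is immediate after the change of variables $t \mapsto x+t$. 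For the second factor, the substitution $s=t^2$ converts $\fint_{-\epsilon}^\epsilon f_2(x,y+t^2)^{a'} dt$ into $\frac{1}{\epsilon}\int_0^{\epsilon^2} f_2(x,y+s)^{a'} s^{-1/2} ds$, and a standard dyadic decomposition in $s$ dominates the sup over $\epsilon$ by $\mathcal{M}^{(2)}(f_2^{a'})(x,y)$. H\"older's inequality in $L^r(dx\,dy)$, followed by the slice-wise $L^{p/a}$ and $L^{q/a'}$ boundedness of the one-dimensional maximal (integrated via Fubini), then closes the estimate.

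For the endpoint $r=1$ (with $p,q \in (1,\infty)$ conjugate), the H\"older approach degenerates. Here the plan is to linearize the supremum by a measurable choice $\epsilon : \R^2 \to (0,\infty)$ and reduce to a uniform-in-$\epsilon$ trilinear form estimate in the spirit of Sections~3 and~4. Writing the smoothed averaging kernel $\rho_\epsilon$ as the telescoping tail $\rho_\epsilon = \delta_0 - \sum_{k \geq j(x,y)}(\rho_{k+1} - \rho_k)$ with $\epsilon \sim 2^{-j(x,y)}$, one isolates the diagonal $\delta_0$-contribution, which equals $f_1 f_2$ and lies in $L^1$ by H\"older. Each remaining piece $\rho_{k+1}-\rho_k$ is mean-zero at scale $2^{-k}$, so the tail is a truncated-triangular-Hilbert-type bilinear operator. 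Applying the low/mixed/high frequency decomposition of Section~4, the high-frequency contribution is controlled via the smoothing inequality \eqref{smoothing1} of Theorem~\ref{thm 5 - smoothing}, which furnishes geometric decay $2^{-\sigma k}$ that survives the supremum; the low and mixed-frequency parts reduce to anisotropic twisted paraproducts handled by Theorem~\ref{thm_ani_twist_para}.

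The principal obstacle is precisely the endpoint $r=1$. For $r>1$ the slack in H\"older's inequality between $(q',p)$ leaves room for the classical maximal bound; at $r=1$ that window collapses, and one must genuinely exploit the oscillatory cancellation provided by the smoothing inequality. The subtle point is that the dyadic scale $j(x,y)$ now depends on position, so the argument of Section~4 must be adapted to accommodate a spatially varying frequency cutoff, and the $2^{-\sigma k}$ decay must beat the loss incurred by passing to the supremum uniformly across $(x,y)$. This adaptation of the frequency pruning and sublevel set machinery to the linearized maximal setting is the technical crux.
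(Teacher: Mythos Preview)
Your treatment of the range $r>1$ is correct and matches the paper: both simply invoke H\"older's inequality in $t$ together with the one-dimensional Hardy--Littlewood maximal bound, exploiting the gap $q'<p$.

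For the endpoint $r=1$, however, your route diverges from the paper's and contains a genuine gap. The paper does \emph{not} linearize the supremum. Instead it passes to the smooth dyadic operator $M(f_1,f_2)=\sup_{j\in\Z}M_j(f_1,f_2)$ and performs the frequency decomposition $M_j=M_j^L+M_j^M+M_j^H$ \emph{before} taking the supremum. The crucial point is that for $\omega\in\{L,M\}$ one obtains a pointwise bound
\[
\verts{M_0^\omega(f_1,f_2)}\lesssim \mathcal{M}^{(1)}f_1\cdot \mathcal{M}^{(2)}f_2
\]
at unit scale (via kernel decay $\verts{k^\omega(u,v)}\lesssim_N\ang{u}^{-N}\ang{v}^{-N}$), which by dilation symmetry is \emph{uniform in $j$}. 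Hence $\sup_j\verts{M_j^\omega}$ is trivially dominated by the same product of maximal functions, and H\"older finishes. For the high-frequency piece one simply uses $\sup_j\verts{M_j^H}\leq\sum_j\verts{M_j^H}$ and runs the absolutely-summable $T^H$ argument verbatim.

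Your linearization strategy forces you to confront a maximally-truncated bilinear singular integral $\sum_{k\geq j(x,y)}(\rho_{k+1}-\rho_k)$ with spatially varying $j(x,y)$. For the high-frequency contribution the $2^{-\sigma\verts{k}}$ decay does make this harmless (indeed it collapses to the same $\sum_j$ bound the paper uses). But for the low and mixed frequencies your appeal to Theorem~\ref{thm_ani_twist_para} is unjustified: that theorem concerns a fixed anisotropic paraproduct, not a maximally truncated one, and you acknowledge the needed adaptation as ``the technical crux'' without supplying it. The paper sidesteps this entirely by securing a scale-uniform pointwise bound for those pieces, which is both simpler and complete.
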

Indeed, as is mentioned in \cite{CHRIST2021107863}, the result for \(r>1\) follows from H\"{o}lder's inequality and the bounds on Hardy-Littlewood maximal function. The interesting part of the statement is the strong type endpoint bound at \(r=1\).
\begin{proof}
    W.L.O.G. we only need to consider non-negative Schwartz functions \(f_1,f_2\). Moreover, instead of working with \(\mathscr{M}\) directly, we shall consider the following auxiliary operator:
    \begin{equation}
        M\br{f_1,f_2}\br{x,y}:=
        \sup_{j\in\Z}
            \int
                f_1\br{x+t,y}
                f_2\br{x,y+t^2}
            2^j\psi\br{2^j t}dt.
    \end{equation}
    We claim that:
    \begin{equation}
     \mathscr{M}\br{f_1,f_2}\br{x,y}\lesssim
     M\br{f_1,f_2}\br{x,y}.
    \end{equation}
    Indeed, for fixed \(\epsilon>0\), we may find unique \(j_0\in\Z\) satisfying \(\epsilon\in\left[2^{-j_0-1},2^{-j_0}\right)\). As a direct consequence, we have:
    \[
    \1_{\br{-\epsilon,\epsilon}\setminus\BR{0}}\br{t}
    \leq
    \varphi\br{2^{j_0}t}
    =
    \sum_{j\geq j_0} 
        \psi\br{2^j t}.
    \]
    With a bit more algebraic manipulation, we derive an estimate for the kernel:
    \[
    \frac{1}{2\epsilon}\1_{\br{-\epsilon,\epsilon}\setminus\BR{0}}\br{t}
    \lesssim \sum_{j\geq j_0} 2^{j_0-j}\cdot 2^j\psi\br{2^j t}
    \]
    and thus,
    \[
    \begin{aligned}
        &
        \fint_{-\epsilon}^\epsilon
            f_1\br{x+t,y}
            f_2\br{x,y+t^2}
        dt\\
        \lesssim &
        \sum_{j\geq j_0}
            2^{j_0-j}
            \int
                f_1\br{x+t,y}
                f_2\br{x,y+t^2}
            2^j\psi\br{2^j t}dt\\
        \leq &
        \sum_{j\geq j_0}
            2^{j_0-j}
            M\br{f_1,f_2}\br{x,y}
        =2M\br{f_1,f_2}\br{x,y}.
    \end{aligned}
    \]
    Taking \(\sup_{\epsilon>0}\) proves the claim. It remains to obtain the analogous bound for \(M\br{f_1,f_2}\) given in \ref{thm_bilinear_maximal}. To draw the comparison between the argument to bound \(T\br{f_1,f_2}\) and \(M\br{f_1,f_2}\), we introduce the following notation:
    \begin{equation}
        M_j\br{f_1,f_2}\br{x,y}
        :=
        \int
            f_1\br{x+t,y}
            f_2\br{x,y+t^2}
        2^j\psi\br{2^j t}dt.
    \end{equation}
    Indeed, using this notation, we see that:
    \begin{equation}
    \left\{
        \begin{aligned}
            M\br{f_1,f_2}\br{x,y}= & 
            \underbrace{\sup_{j\in\Z}}
                M_j\br{f_1,f_2}\br{x,y}\\
                    &\hspace{1.75ex}\text{v.s.}\\
            T\br{f_1,f_2}\br{x,y}= &
            \overbrace{\sum_{j\in\Z}}
                T_j\br{f_1,f_2}\br{x,y},
        \end{aligned}
    \right.
    \end{equation}
    where \(T_j\) and \(M_j\) are almost identical with their kernels being the main differences:
    \[
        \overset{\textbf{Mean zero}}{\frac{\psi\br{2^j t}}{t}}
        \hspace{1.5ex}\text{v.s.}\hspace{1.5ex}
        \overset{\textbf{Positive}}{2^j\psi\br{2^j t}}.
    \]
    This suggests that we shall perform the analogous decomposition on \(M_j\):
    \begin{equation}\label{eq 2.1 M LMH}
        M_j = M_j^L + M_j^M +M_j^H,
    \end{equation}
    where each component is defined below:
    \begin{equation*}
        \left\{
        \begin{aligned}
            M_j^L = &     
                \sum_{\substack{k\in \Z^2\\ 
                :k_1 \vee k_2 \leq 0}} M_j
                \br{
                    \D{1}{j+k_1} \otimes \D{2}{2j+k_2} 
                }\\
            M_j^M = & 
                \sum_{\substack{
                k\in\Z^2\\
                :k_1 \vee k_2 >0 \\ 
                |k_1 - k_2| \geq 100}} M_j
                \br{
                    \D{1}{j+k_1} \otimes \D{2}{2j+k_2}
                }\\
            M_j^H = &
                \sum_{\substack{
                k\in\Z^2\\
                :k_1 \vee k_2 >0 \\ 
                |k_1 - k_2| < 100}} M_j
                \br{
                    \D{1}{j+k_1} \otimes \D{2}{2j+k_2}
                }.
        \end{aligned}
        \right.
    \end{equation*}
    With the notations introduced, we formulate the proof strategy:
    \begin{itemize}
        \item Let \(\omega=L,M\), we aim to prove the following pointwise bound on the unit scale:
        \begin{equation}\label{eq_bi_M_pointwise_bound}
            \verts{M^\omega_0\br{f_1,f_2}\br{x,y}}
            \lesssim 
            \prod_{l=1,2}\M^{\br{l}}\br{f_l}\br{x,y}.
        \end{equation}
        Once we have the pointwise bound, the dilation symmetry:
        \begin{equation}
            M^\omega_j\br{f_1,f_2}=
            \Dil^r_{2^{-j},2^{-2j}}
            M^\omega_0
            \br{
                \Dil^p_{2^j,2^{2j}}f_1,
                \Dil^q_{2^j,2^{2j}}f_2
            }
        \end{equation}
        extends the pointwise bound to all scales and yields a pointwise estimate:
        \begin{equation}
            \sup_{j\in\Z}
                \verts{M^\omega_j\br{f_1,f_2}\br{x,y}}
            \lesssim 
            \prod_{l=1,2}\M^{\br{l}}\br{f_l}\br{x,y}
        \end{equation}
        that implies the desired H\"{o}lder type bound.
        \item For high-frequency components, we made the following key observations:
        \begin{equation}
            \sup_{j\in\Z}
                \verts{M^H_j\br{f_1,f_2}\br{x,y}}
            \leq
            \sum_{j\in\Z}
                \verts{M^H_j\br{f_1,f_2}\br{x,y}}.
        \end{equation}
        Additionally, since the argument throughout the high-frequency component \(T^H\) is absolutely summable, replacing \(T_j\br{\Delta_{k_1}^{\br{1}}f_1,\Delta_{k_2}^{\br{2}}f_2}\) with \(\verts{M_j\br{\Delta_{k_1}^{\br{1}}f_1,\Delta_{k_2}^{\br{2}}f_2}}\) and then going through the analogous argument gives the desired result.
    \end{itemize}
    It remains to prove \eqref{eq_bi_M_pointwise_bound}. Direct calculation gives:
    \begin{equation}
        M^\omega_0\br{f_1,f_2}\br{x,y}
        =
        \int
            f_1\br{x-u,y}
            f_2\br{x,y-v}
            k^\omega\br{u,v}
        dudv,
    \end{equation}
    where the kernel \(k^\omega\) can be defined in the following way:
    \begin{equation}
        \widehat{k^\omega}\br{\xi,\eta}:=
        \sum_{k:\omega}
            \psi_{k_1}\br{\xi}\psi_{k_2}\br{\eta}
            m_+\br{\xi,\eta}.
    \end{equation}
    Here, we set:
    \begin{equation}
        m_+\br{\xi,\eta}:=\int e^{2\pi i\br{\xi t+\eta t^2}}\psi\br{t}dt.
    \end{equation}
    Comparing to \eqref{eq_multiplier_1st_appearence},
    \eqref{eq_multiplier_2nd_appearence}, \(m_+\br{\xi,\eta}\) behaves almost identically to \(m\br{\xi,\eta}\), the only difference is \(m_+\br{\xi,\eta}\)'s lack of cancellation for low-frequency \(\verts{\xi}\vee\verts{\eta}\lesssim 1\). Fortunately, this is not an issue for our purpose. In fact, for \(\omega=L\), once we recognize that
    \begin{equation}
        \widehat{k^L}\br{\xi,\eta}=
        \varphi\br{\xi}\varphi\br{\eta}m_+\br{\xi,\eta},
    \end{equation}
    all we need is the trivial estimate \eqref{eq_partial_m_trivial_est} and the compactness of \(\supp \varphi\):
    \begin{equation}
        \Verts{
            \partial_\xi^\alpha
            \partial_\eta^\beta
            \widehat{k^L}
        }_{L^1}
        =
        \Verts{
            \partial_\xi^\alpha
            \partial_\eta^\beta
            \br{
                \varphi\otimes\varphi\cdot
                m_+
            }
        }_{L^1}
        \underset{\alpha,\beta}{\lesssim} 1.
    \end{equation}
    This implies an arbitrary fast decay on the spatial side:
    \begin{equation}
        \verts{k^L\br{u,v}}\underset{N}{\lesssim}\ang{u}^{-N}\ang{v}^{-N}
    \end{equation}
    and the tensor structure on the right-hand side implies:
    \begin{equation}
        \verts{M^L_0\br{f_1,f_2}\br{x,y}}
        \lesssim 
        \prod_{l=1,2}\M^{\br{l}}\br{f_l}\br{x,y}.
    \end{equation}
    For \(\omega=M\), we first recognize that
    \begin{equation}
        \begin{aligned}
            \widehat{k^M}\br{\xi,\eta}
            =&
            \sum_{k_1\geq 0} 
                \psi_{k_1}\br{\xi}
                \varphi_{k_1-100}\br{\eta}
                m_+\br{\xi,\eta}\\
            + &
            \sum_{k_2\geq 0} 
                \varphi_{k_2-100}\br{\xi}
                \psi_{k_2}\br{\eta}
                m_+\br{\xi,\eta}.
        \end{aligned}
    \end{equation}
    Using the non-stationary phase principle in the form of \eqref{eq non-stationary}, we have:
    \begin{equation}
        \begin{aligned}
            \Verts{
                \partial_\xi^\alpha
                \partial_\eta^\beta
                \widehat{k^M}
            }_{L^1}
            \lesssim &
            \sum_{k_1\geq 0}
                \Verts{
                    \partial_\xi^\alpha
                    \partial_\eta^\beta
                    \br{
                        \psi_{k_1}\otimes
                        \varphi_{k_1-100}\cdot
                        m_+
                    }
                }_{L^1}
            +
            \sum_{k_2\geq 0}
                \Verts{
                    \partial_\xi^\alpha
                    \partial_\eta^\beta
                    \br{
                        \varphi_{k_2-100}\otimes
                        \psi_{k_2}\cdot
                        m_+
                    }
                }_{L^1}\\
            \underset{\alpha,\beta,N}{\lesssim} &
            \sum_{k_1\geq 0}
                2^{-Nk_1}
                \verts{
                    \supp
                        \psi_{k_1}\otimes
                        \varphi_{k_1-100}
                }
                +
            \sum_{k_2\geq 0}
                2^{-Nk_2}
                \verts{
                    \supp
                        \varphi_{k_2-100}\otimes
                        \psi_{k_2}
                }\\
            \sim &
            \sum_{k_1\geq 0}
                2^{\br{2-N}k_1}
            +
            \sum_{k_2\geq 0}
                2^{\br{2-N}k_2}
            \lesssim 1,\hspace{1.5ex}\textbf{whenever}\hspace{1.5ex}
            N \gg 1.
        \end{aligned}
    \end{equation}
    Therefore, for the same reason, we have fast decay on the spatial side:
    \begin{equation}
        \verts{k^M\br{u,v}}\underset{N}{\lesssim}\ang{u}^{-N}\ang{v}^{-N}
    \end{equation}
    and also the desired estimate:
    \begin{equation}
        \verts{M^M_0\br{f_1,f_2}\br{x,y}}
        \lesssim 
        \prod_{l=1,2}\M^{\br{l}}\br{f_l}\br{x,y}.
    \end{equation}
\end{proof}

\subsection{Application to Roth Type Theorem}
\begin{theorem}\label{roth}
    Let $\varepsilon \in (0,1)$ and $S\subseteq [0,1]^{2}$ a measurable set of Lebesgue measure at least $\varepsilon$. Then there exist
\begin{equation}
    (x,y),(x+t,y),(x,y+t^{2})\in S
\end{equation}
with $t>\operatorname{exp}(-\operatorname{exp}(\varepsilon^{-C}))$ for some constant $C>0$ not depending on $S$ or $\varepsilon$.
\end{theorem}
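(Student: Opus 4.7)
The plan is to prove Theorem \ref{roth} by a density-increment argument powered by Theorem \ref{thm 5 - smoothing}. For a fixed nonnegative bump $\zeta \in C^\infty_c([0,1]^2 \times [1/2, 1])$, I count configurations in $S$ with $t$ localized at unit scale via
$$N(S) := \int_{\R^3} 1_S(x+t, y)\,1_S(x, y+t^2)\,1_S(x,y)\,\zeta(x, y, t)\,dx\,dy\,dt.$$
The task then reduces to showing $N(S') > 0$ for some affinely rescaled copy $S'$ of $S$ along the way; pulling back the rescalings produces a configuration in the original $S$ with the announced lower bound on $t$.

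I split $1_S = \varepsilon \chi + g$, where $\chi$ is a smooth cutoff equal to $1$ on $[0,1]^2$, and expand $N(S) = \Lambda(1_S, 1_S, 1_S)$ trilinearly into a main term $\varepsilon^3 \Lambda(\chi, \chi, \chi) \gtrsim \varepsilon^3$ plus seven error terms, each carrying a factor of $g$. I control each error term by decomposing $g$ in a Littlewood--Paley manner in the coordinate matching its slot: if $g$ sits in position $f_1$, I split $g = P_{\le k_0}^{(1)} g + P_{>k_0}^{(1)} g$ in the first frequency variable, and analogously for the $f_2$ slot. Using $\|g\|_{L^\infty}, \|g\|_{L^2} \le 1$, the smoothing inequality in the equivalent form \eqref{smoothing2} bounds the high-frequency contribution by $2^{-\sigma k_0}$. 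The low-frequency remainder is estimated trivially by H\"older/Young and is proportional to $\|P_{\le k_0}^{(\cdot)} g\|_{L^2}$. For the term with $g$ in position $f_3$, a preliminary Cauchy--Schwarz reduces matters to the same type of $L^2$ estimate. Choosing $2^{k_0} = \varepsilon^{-B}$ with $B$ large enough, all high-frequency pieces contribute $\ll \varepsilon^3$.

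If the error dominates, i.e.\ $N(S) < \tfrac12 c \varepsilon^3$, I deduce $\|P_{\le k_0}^{(l)} g\|_{L^2} \gtrsim \varepsilon^{A}$ for some absolute exponent $A$ and some $l \in \{1,2\}$. Since $P_{\le k_0}^{(l)} g$ is a convolution with a one-dimensional low-pass kernel, a standard Chebyshev-pigeonhole argument produces an anisotropic rectangle $R \subseteq [0,1]^2$ of dimensions $\varepsilon^{B\delta} \times \varepsilon^{2B\delta}$ on which $S$ has density at least $\varepsilon(1 + c\varepsilon^{A'})$. Crucially, the $1:2$ aspect ratio of $R$ matches the parabolic scaling $(x, y, t) \mapsto (rx, r^2 y, rt)$ under which our pattern is preserved, so the affine map sending $R$ to $[0,1]^2$ transforms the pattern equivariantly. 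I iterate. Since the density cannot exceed $1$, after $O(\varepsilon^{-A'-1})$ iterations the dichotomy must terminate with a configuration, and the product of rescalings converts $t \sim 1$ in the final step into a lower bound of the form $t > \exp(-\exp(\varepsilon^{-C}))$ in the original coordinates.

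The main obstacle is the \emph{anisotropy} of the density-increment step. The smoothing inequality treats the two inputs asymmetrically -- one in $H^{(-\sigma, 0)}$ and the other in $H^{(0,-\sigma)}$ -- so the concentration rectangle must have the $1:2$ aspect ratio dictated by the parabolic symmetry; an isotropic choice would either fail to be compatible with the iteration or would lose the matching between frequency and spatial scales. A secondary but equally important difficulty is the quantitative bookkeeping: the choices of $B$, $k_0$, and $\delta$ at each step, together with how the exponent $A'$ of the density increment composes with the polynomial-scale rectangles under $O(\varepsilon^{-A'-1})$ iterations, must be tracked carefully so as not to blow past the target doubly exponential bound.
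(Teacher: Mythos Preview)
Your strategy (density increment on parabolic subrectangles) is genuinely different from the paper's, which runs an \emph{energy pigeonhole} over a chain of scales and never rescales $S$. The paper fixes a geometric chain of cutoffs $(k_{i,L},k_i,k_{i,H})$, uses Theorem~\ref{thm 5 - smoothing} to make the high-frequency error $O(2^{\tilde\sigma(2k_i-k_{i,H})})$, and bounds the remaining intermediate piece by $\sum_{l=1,2}\|f\ast_l\theta_{k_{i,H}}-f\ast_l\theta_{k_{i,L}}\|_{L^2}$. These pieces are almost orthogonal across $i$, so after $L\lesssim\varepsilon^{-6}$ steps one of them drops below $\tfrac{c_0}{2}\varepsilon^3$; at that scale the direct lower bound of Lemma~\ref{noncancel} dominates and yields $I\ge 2^{-k_i}\cdot c\varepsilon^3$ with $k_i$ of order $\exp(\varepsilon^{-C})$. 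No iteration on subrectangles is needed.

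Your density-increment step, however, does not go through as written. The implication ``$\|P^{(l)}_{\le k_0}g\|_{L^2}\gtrsim\varepsilon^A\Rightarrow$ density increment on a $2^{-k_0}\times 2^{-2k_0}$ rectangle'' is false in general: take $S=[0,1]\times E$ with $E\subset[0,1]$ a $2^{-2k_0}$-periodic set of measure $\varepsilon$. Then $g(x,y)=1_E(y)-\varepsilon$ is constant in $x$, so $P^{(1)}_{\le k_0}g=g$ and $\|P^{(1)}_{\le k_0}g\|_{L^2}^2=\varepsilon(1-\varepsilon)$, yet every rectangle of height $2^{-2k_0}$ captures exactly one period of $E$ and has density exactly $\varepsilon$. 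The one-variable projection $P^{(1)}_{\le k_0}$ is blind to the $\xi_2$-frequency of $g$, and --- as the remark after Theorem~\ref{thm 5 - smoothing} makes explicit --- there is no smoothing bound of the shape $\|f_1\|_{H^{(0,-\sigma)}}\|f_2\|_{H^{(-\sigma,0)}}\|f_3\|_{L^\infty}$ that would let you upgrade slot~$1$ to a two-variable low-pass and thereby force a genuine $2$D localization. This is precisely the obstruction the paper's scale-pigeonhole argument is designed to circumvent.
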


\begin{lemma}\label{noncancel}
    Let $\theta \geq 0$ be an even smooth functino which is supported in $[-2,2]$, constant on $[-1,1]$, monotone on $[1,2]$ and normalized such that $\int \theta =1$. Let $\theta_{k}(x):=D^{1}_{2^{-k}}\theta$. For any bounded non-negative function $f$ on $\mathbb{R}^{2}$ that is supported in $[0,1]^{2}$ and any $k,l \in \mathbb{N}$, we have
\begin{equation}
    \int_{[0,1]^{2}}f(f\ast_{1}\theta_{k})(f\ast_{2}\theta_{l})\geq c_{0}\left( \int_{[0,1]^{2}} f\right)^{3}
\end{equation}
for some constant $c_{0}>0$ depending only on $\theta$.
\end{lemma}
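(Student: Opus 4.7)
The plan is to reduce Lemma~\ref{noncancel} to a ``base case'' (a correlation inequality between $f$ and the product of its marginals) and then prove that base case in one line via Jensen's inequality for the concave function $\log$.

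First, the hypotheses on $\theta$ force $c_1:=\theta(0)>0$: monotonicity on $[1,2]$ with $\theta(\pm 2)=0$ together with $\theta\ge 0$ and $\int\theta=1$ rules out $c_1=0$. Hence $\theta\ge c_1\mathbf{1}_{[-1,1]}$ and by scaling $\theta_k(s)\ge c_1 2^k\mathbf{1}_{[-2^{-k},2^{-k}]}(s)$, so after absorbing a factor $(2c_1)^2$ into the final constant it suffices to prove the inequality with each $\theta_m$ replaced by the uniform probability density $A_m$ on $[-2^{-m},2^{-m}]$. Fubini then rewrites the left-hand side as
\[
\frac{2^{k+l}}{4}\iiiint f(x,y)\,f(u,y)\,f(x,v)\,\mathbf{1}_{|x-u|\le 2^{-k}}\,\mathbf{1}_{|y-v|\le 2^{-l}}\,dx\,dy\,du\,dv.
\]
I would then partition $[0,1]^2$ into $N\lesssim 2^{k+l}$ rectangles $Q_{ij}=I_i\times J_j$ of sides $2^{-k}\times 2^{-l}$. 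Restricting $(u,v)$ to the same rectangle as $(x,y)$ makes both indicator constraints automatic, yielding the lower bound $\iiiint\ge \sum_{i,j}\int_{Q_{ij}} f(x,y)\,M_{ij}(y)\,N_{ij}(x)\,dx\,dy$, where $M_{ij}(y):=\int_{I_i} f(u,y)\,du$ and $N_{ij}(x):=\int_{J_j} f(x,v)\,dv$ are the partial marginals of $f$ on $Q_{ij}$.

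The crux is then the base case: for any $g\ge 0$ on $[0,1]^2$ with $\int g=J$ and marginals $G_1(y)=\int g(\cdot,y)$, $G_2(x)=\int g(x,\cdot)$,
\[
\int_{[0,1]^2} g(x,y)\,G_1(y)\,G_2(x)\,dx\,dy\;\ge\; J^3. \qquad (\ast)
\]
Normalizing $J=1$, $g$ is a joint probability density with marginal densities $G_1,G_2$ on $[0,1]$. Jensen's inequality (concavity of $\log$) gives
\[
\log\int g\,G_1\,G_2\;\ge\;\int g\,\log(G_1 G_2)\,dx\,dy\;=\;\int G_1\log G_1\,dy+\int G_2\log G_2\,dx,
\]
and the elementary bound $t\log t\ge t-1$ together with $\int G_i=1$ on an interval of unit length gives $\int G_i\log G_i\ge 0$; hence $(\ast)$ holds with sharp constant $1$.

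Finally, applying $(\ast)$ to each rectangle $Q_{ij}$ after affine rescaling to $[0,1]^2$ yields $\int_{Q_{ij}} f\,M_{ij}\,N_{ij}\ge I_{ij}^3/|Q_{ij}|=2^{k+l}I_{ij}^3$, where $I_{ij}:=\int_{Q_{ij}}f$. Summing and applying the power-mean inequality $\sum_{ij} I_{ij}^3\ge I^3/N^2\gtrsim I^3/2^{2(k+l)}$ (with $I=\int f$), the scaling factors $2^{k+l}$, $2^{-2(k+l)}$ and the prefactor $2^{k+l}/4$ collapse to give the lemma with constant $c_0\sim c_1^2$, depending only on $\theta$. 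The main obstacle is $(\ast)$: it is a Loomis--Whitney-type \emph{lower} correlation bound, which direct Cauchy--Schwarz cannot deliver since Cauchy--Schwarz only yields \emph{upper} bounds on correlations. Recognizing that Jensen for $\log$ reduces $(\ast)$ to the non-negativity of $\int G_i\log G_i$ (the relative entropy of $G_i$ against the uniform density on $[0,1]$) is the one genuinely non-bookkeeping step in the argument.
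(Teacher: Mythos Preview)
Your proof is correct and follows essentially the same route as the paper's: reduce the smooth convolution to dyadic averaging, partition $[0,1]^2$ into $2^{-k}\times 2^{-l}$ rectangles, prove the key correlation lower bound $(\fint_Q f)^3\le \fint_Q f\,(E^{(1)}_k f)\,(E^{(2)}_l f)$ on each rectangle, and sum via convexity. The only difference is cosmetic: the paper obtains the rectangle inequality by a three-factor $L^3$ H\"older argument (which forces them to first assume $f\ge\epsilon>0$ and pass to the limit), whereas your Jensen/entropy formulation $\log\!\int gG_1G_2\ge \int G_1\log G_1+\int G_2\log G_2\ge 0$ is the dual statement and handles vanishing of $f$ directly via the $0\log 0=0$ convention.
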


\begin{proof}
    We may treat \(f\ast_1\theta_k\) and \(f\ast_2\theta_l\) as if taking certain conditional expectations. This can be made precise by comparing the two with the following dyadic variant:
    \begin{equation}
        \left\{
        \begin{aligned}
            E_k^{\br{1}}f\br{x,y}
            := &
            \sum_{I\in\mathcal{D}_k}
                \1_{I}\br{x}\fint_{I}f\br{x',y}dx'\\
            E_l^{\br{2}}f\br{x,y}
            := &
            \sum_{J\in\mathcal{D}_l}
                \1_{J}\br{y}\fint_{J}f\br{x,y'}dy',
        \end{aligned}
        \right.
    \end{equation}
    where \(\mathcal{D}_k:=\BR{2^{-k}a+\left[0,2^{-k}\right)}_{a=0}^{2^k-1}\) denotes the collection of dyadic intervals of length \(2^{-k}\) contained in the unit interval \(\Br{0,1}\). Indeed, for any \(x\in I\in\mathcal{D}_k\), since clearly
    \begin{equation}
        I\subset x+\Br{-2^{-k},2^k},
    \end{equation}
    we must have:
    \begin{equation}
        \fint_I f\br{x',y}dx' \underset{\theta}{\lesssim} \int f\br{x-t,y}\theta_k\br{t}dt.
    \end{equation}
    This implies the pointwise estimate \(E^{\br{1}}_k f \underset{\theta}{\lesssim} f\ast_1\theta_k\). By symmetry, we also derive the analogous estimate \(E^{\br{2}}_l f \underset{\theta}{\lesssim} f\ast_2\theta_l\). This reduces the matter to proving the following:
    \begin{equation}
        \int_{\Br{0,1}^2} 
            f 
            \br{E^{\br{1}}_k f}
            \br{E^{\br{2}}_l f}
        \geq 
        \br{
            \int_{\Br{0,1}^2} f
        }^3.
    \end{equation}
    For technical reasons, we make an additional assumption that \(f\geq \epsilon>0\) on \(\Br{0,1}^2\). (We'll later remove such an assumption.) We proceed from the right-hand-side:
    \begin{equation}
        \int_{\Br{0,1}^2} f 
        =
        \sum_{\substack{
            I\in\mathcal{D}_k\\
            J\in\mathcal{D}_l
        }}
            \verts{I\times J}\cdot
            \fint_{I\times J} 
                f.
    \end{equation}
    Since \(1=\sum_{\substack{I\in\mathcal{D}_k\\J\in\mathcal{D}_l}}\verts{I\times J}\), the integral can be interpreted as a weighted average of \(\fint_{I\times j}f\). This turns out to be a perfect setting to apply Jensen's inequality:
    \begin{equation}
        \br{
            \int_{\Br{0,1}^2} f 
        }^3
        \leq 
        \sum_{\substack{
            I\in\mathcal{D}_k\\
            J\in\mathcal{D}_l
        }}
            \verts{I\times J}\cdot
            \br{
            \fint_{I\times J} 
                f
            }^3.
    \end{equation}
    Now, our goal is clear--extract a term of the following form:
    \begin{equation}
        \left.
            f
            \br{E^{\br{1}}_k f}
            \br{E^{\br{2}}_l f}
        \right\vert_{I\times J}
        =
        \left.
            f
            \br{\fint_I f\br{x',\cdot}dx'}
            \br{\fint_J f\br{\cdot,y'}dy'}
        \right\vert_{I\times J}
    \end{equation}
    out of the integral expression \(\fint_{I\times J} f\). This suggests that we split \(f\) into:
    \begin{equation}
        \begin{aligned}
            f= &
            \Br{
                f
                \br{\fint_I f\br{x',\cdot}dx'}
                \br{\fint_J f\br{\cdot,y'}dy'}
            }^{1/3}\\
            \times &
            \Br{
                f
                \br{\fint_I f\br{x',\cdot}dx'}^{-1}
            }^{1/3}\\
            \times &
            \Br{
                f
                \br{\fint_J f\br{\cdot,y'}dy'}^{-1}
            }^{1/3}.
        \end{aligned}
    \end{equation}
    Note that the second and the third term make sense owing to our initial assumption \(f\geq \epsilon>0\). We make use of the above decomposition and perform a \(L^3\times L^3\times L^3\) H\"{o}lder's inequality on \(\fint_{I\times J} f\) to obtain the following:
    \begin{equation}
        \begin{aligned}
            \fint_{I\times J} f
            \leq &
            \br{
                \fint_{I\times J}
                    f
                    \br{E^{\br{1}}_k f}
                    \br{E^{\br{2}}_l f}
            }^{1/3}\\
            \times &
            \cancelto{1}{
            \br{
                \fint_{I\times J}
                    f
                    \br{\fint_I f\br{x',\cdot}dx'}^{-1}
            }^{1/3}
            }\\
            \times &
            \cancelto{1}{
            \br{
                \fint_{I\times J}
                    f
                    \br{\fint_J f\br{\cdot,y'}dy'}^{-1}
            }^{1/3}
            }.
        \end{aligned}
    \end{equation}
    Combining everything, we have:
    \begin{equation}
        \begin{aligned}
            \br{
                \int_{\Br{0,1}^2} f 
            }^3
            \leq &
            \sum_{\substack{
                I\in\mathcal{D}_k\\
                J\in\mathcal{D}_l
            }}
                \verts{I\times J}\cdot
                \br{
                \fint_{I\times J} 
                    f
                }^3\\
            \leq &
            \sum_{\substack{
                I\in\mathcal{D}_k\\
                J\in\mathcal{D}_l
            }}
                \verts{I\times J}\cdot
                \fint_{I\times J}
                    f
                    \br{E^{\br{1}}_k f}
                    \br{E^{\br{2}}_l f}\\
            = &
            \int_{\Br{0,1}^2}
                f
                \br{E^{\br{1}}_k f}
                \br{E^{\br{2}}_l f}.
        \end{aligned}
    \end{equation}
    This finishes the proof given that we have \(f\geq \epsilon >0\). In general, we consider \(f_\epsilon:=\1_{\Br{0,1}^2}\cdot\br{f\vee \epsilon}\), and pass the limit \(\epsilon \searrow 0\) via dominated convergence theorem.
\end{proof}

Now we prove the main Roth-type theorem.
\begin{proof}[proof of theorem \ref{roth}]
Write
\begin{equation}
     I=\int_{[0,1]^{3}}f(x,y)f(x+t,y)f(x,y+t^{2})dxdydt   
\end{equation}
We will show that for every measurable function $f$ on $[0,1]^{2}$ with $0\leq f\leq 1$(Hence, $\|f\|_{L^{\infty}}\leq 1$ and $\|f\|_{L^{2}}\leq 1$.) and $\int_{[0,1]^{2}}f\geq \varepsilon$, we have $I>\delta (\varepsilon)$. Then we take $f=1_{S}$, we have
\begin{equation}
    \begin{aligned}
I&=\int_{[0,1]^{3}}1_{S}(x,y)1_{S}(x+t,y)1_{S}(x,y+t^{2})dxdydt\\
&=\int_{[0,1]^{2}}1_{S}(x,y)\int_{0}^{\delta}1_{S}(x+t,y)1_{S}(x,y+t^{2})dtdxdy\\
&+\int_{[0,1]^{2}}1_{S}(x,y)\int_{\delta}^{1}1_{S}(x+t,y)1_{S}(x,y+t^{2})dtdxdy > \delta
    \end{aligned}
\end{equation}
Since trivially, we have
\begin{equation}
    \int_{[0,1]^{2}}1_{S}(x,y)\int_{0}^{\delta}1_{S}(x+t,y)1_{S}(x,y+t^{2})dtdxdy\leq \delta
\end{equation}
Then we have
\begin{equation}
    \int_{[0,1]^{2}}1_{S}(x,y)\int_{\delta}^{1}1_{S}(x+t,y)1_{S}(x,y+t^{2})dtdxdy >0
\end{equation}
That is there exist $(x,y),(x+t,y),(x,y+t^{2})\in S$ with $t>\delta (\varepsilon)$. We may also observe that once we have better lower bound of $I$, then we have better gap estimate.
Roughly speaking, we first decompose $1_{[0,1]}$ into different scales. Let $\tau$ be a smooth bump, supported on $[\frac{1}{2},2]$, taking value in $[0,1]$ and $\int \tau =1$. Let $\tau_{k}=D^{1}_{2^{-k}}\tau$. Then we have
\begin{equation}
    I\geq \sum_{k\in \mathbb{N}}2^{-k}\int_{[0,1]^{3}}f(x,y)f(x+t,y)f(x,y+t^{2})\tau_{k}(t)dxdydt
\end{equation}
At the end, we will not use the full sum, we will just pick one specific $k$ which is large enough to give us lower bound. Fix a scale $k$ of the bump, we will decompose $f$ in frequency side in scale and split into three terms, higher or lower or comparable to the scale of the bump. Let $k_{L}<k<k_{H}$. We have
\begin{equation}\label{threeterm}
    \begin{aligned}
        &\int_{[0,1]^{3}}f(x,y)f(x+t,y)f(x,y+t^{2})\tau_{k}(t)dxdydt\\
    =&\int_{[0,1]^{3}}f(x,y)f(x+t,y)(f-f\ast_{2}\theta_{k_{H}})(x,y+t^{2})\tau_{k}(t)dxdydt\\
    +&\int_{[0,1]^{3}}f(x,y)f(x+t,y)(f\ast_{2}\theta_{k_{H}}-f\ast_{2}\theta_{k_{L}})(x,y+t^{2})\tau_{k}(t)dxdydt\\
    +&\int_{[0,1]^{3}}f(x,y)f(x+t,y)(f\ast_{2}\theta_{k_{L}})(x,y+t^{2})\tau_{k}(t)dxdydt\\
    =&I_{1}+I_{2}+I_{3}
    \end{aligned}
\end{equation}

The first term $I_{1}$ is high frequency compare to the bump and the cancellation may be seen. We therefore want to apply the smoothing inequality \eqref{thm 5 - smoothing}. One way to argue is to split the high frequency part into dyadic annulus then apply smoothing inequality. Here we directly use the equivalent form of smoothing inequality. The first term is then dominated by
\begin{equation}\label{smoothappl}
    \begin{aligned}
        \|f\|_{L^{\infty}}\cdot \|T_{k}(f,f-f\ast_{2}\theta_{k_{H}})\|_{L^{1}}=&\|D^{1}_{2^{-k},2^{-2k}}T_{0}(D^{2}_{2^{k},2^{2k}}f,D^{2}_{2^{k},2^{2k}}(f-f\ast_{2}\theta_{k_{H}}))\|_{L^{1}}\\
        =&\|T_{0}(D^{2}_{2^{k},2^{2k}}f,D^{2}_{2^{k},2^{2k}}(f-f\ast_{2}\theta_{k_{H}}))\|_{L^{1}}\\
        \lesssim &\|D^{2}_{2^{k},2^{2k}}f\|_{H^{(-\sigma ,0)}}\cdot \|D^{2}_{2^{k},2^{2k}}(f-f\ast_{2}\theta_{k_{H}})\|_{H^{(0,-\sigma )}}\\
        \lesssim &\|f\|_{L^{2}}\cdot \|(1+|\eta|^{2})^{-\frac{\sigma}{2}}D^{2}_{2^{-k},2^{-2k}}\left(\widehat{f}(\xi ,\eta)(\widehat{\theta}_{k_{H}}(0)-\widehat{\theta}_{k_{H}}(\eta))\right)\|_{L^{2}}\\
        \lesssim  &\|(1+|\eta|^{2})^{-\frac{\sigma}{2}}\left(2^{-k_{H}+2k}|\eta| \wedge 1  \right)\|_{L^{\infty}}\\
        \sim &\|\left( 1\wedge |\eta|^{-\sigma}\right) \left(2^{-k_{H}+2k}|\eta| \wedge 1  \right)\|_{L^{\infty}}
    \end{aligned}
\end{equation}
where the last inequality is by mean value theorem
\begin{equation}
    \widehat{\theta}_{k_{H}}(0)-\widehat{\theta}_{k_{H}}(\eta)\leq 1\wedge \|\widehat{\theta}_{k_{H}}\|_{L^{\infty}}\cdot |\eta|
\end{equation}
The trivial bound 1 is by triangle inequality and $\|\widehat{\theta}\|_{L^{\infty}}\lesssim 1$.
Let $0<\delta <1$ be a fix number. We have two cases
\begin{equation}
    |\eta|\lesssim  2^{\delta(k_{H}-2k)} \Longrightarrow \eqref{smoothappl}\lesssim 1\cdot 2^{-(1-\delta)(k_{H}-2k)}
\end{equation}
\begin{equation}
    |\eta|\gtrsim \ 2^{\delta(k_{H}-2k)} \Longrightarrow \eqref{smoothappl}\lesssim 1\cdot 2^{-\sigma \delta(k_{H}-2k)}
\end{equation}
That is, there exist a $\tilde{\sigma}>0$ such that
\begin{equation}
    \eqref{smoothappl}\lesssim 1\cdot 2^{\tilde{\sigma}(2k-k_{H})}
\end{equation}
As for the second term $I_{2}$, we just use $(\infty ,2,2)$ H\"older inequality and may bound it by
\begin{equation}
    \|f\ast_{2}\theta_{k_{H}}-f\ast_{2}\theta_{k_{L}}\|_{L^{2}}
\end{equation}
We further split the third term $I_{3}$ into three terms.
\begin{equation}
    \begin{aligned}
        I_{3}&=I_{3}-\int_{[0,1]^{2}}f(f\ast_{1}\tau_{k})(f\ast_{2}\tau_{k_{L}})\\
        &+\int_{[0,1]^{2}}f(f\ast_{1}\tau_{k})(f\ast_{2}\tau_{k_{L}})-\int_{[0,1]^{2}}f(f\ast_{1}\theta_{k_{L}})(f\ast_{2}\tau_{k_{L}})\\
        &+\int_{[0,1]^{2}}f(f\ast_{1}\theta_{k_{L}})(f\ast_{2}\tau_{k_{L}})\\
        &=I_{3-1}+I_{3-2}+I_{3-3}
    \end{aligned}
\end{equation}
We estimate $I_{3-1}$ in the following
\begin{equation}
    \begin{aligned}
|I_{3,1}|=&|\int_{[0,1]^{4}}f(x,y)f(x+t,y)f(x,y+s)\theta_{k_{L}}(s+t^{2})\tau_{k}(t)dxdydsdt \\
-&\int_{[0,1]^{4}}f(x,y)f(x+t,y)f(x,y+s)\tau_{k}(t)\theta_{k_{L}}(s)dsdtdxdy |\\
\lesssim &\int_{[0,1]^{4}}f(x,y)f(x+t,y)f(x,y+s)\left(\theta_{k_{L}}(s+t^{2})-\theta_{k_{L}}(s)\right)\tau_{k}(t)dxdydsdt\\
\lesssim &\int_{[0,1]}2^{k_{L}}\cdot 2^{-2k}|t|^{2}2^{k}\tau(2^{k}t)dt\lesssim 2^{k_{L}-k}
    \end{aligned}
\end{equation}
where we use
\begin{equation}
   |\theta_{k_{L}}(s+t^{2})-\theta_{k_{L}}(s)|\lesssim \|\theta_{k_{L}}'\|_{L^{\infty}}\cdot |t|^{2}\lesssim 2^{k_{L}}\cdot 2^{-2k}
\end{equation}
As for $I_{3-2}$, we can estimate as follow
\begin{equation}
    \begin{aligned}
        |I_{3-2}|&\lesssim \|f\ast_{1}\tau_{k}-f\ast_{1}\theta_{k_{L}}\|_{L^{2}}\\
        &\lesssim |f\ast_{1}\tau_{k}-f\ast_{1}\tau_{k}\ast_{1}\theta_{k_{H}}\|_{L^{2}}+\|f\ast_{1}\tau_{k}\ast_{1}\theta_{k_{H}}-f\ast_{1}\tau_{k}\ast_{1}\theta_{k_{L}}\|_{L^{2}}+\|f\ast_{1}\tau_{k}\ast_{1}\theta_{k_{L}}-f\ast_{1}\theta_{k_{L}}\|_{L^{2}}\\
        &\lesssim \|\tau_{k}-\tau_{k}\ast \theta_{k_{H}}\|_{L^{1}}+\|f\ast_{1}\theta_{k_{H}}-f\ast_{1}\theta_{k_{L}}\|_{L^{2}}+\|\tau_{k}\ast \theta_{k_{L}}-\theta_{k_{L}}\|_{{L^{1}}}\\
        &\lesssim \|f\ast_{1}\theta_{k_{H}}-f\ast_{1}\theta_{k_{L}}\|_{L^{2}}+O(2^{k_{L}-k})+O(2^{k-k_{H}})
    \end{aligned}
\end{equation}
where we again use the mean value theorem. As for $I_{3-3}$, by \textit{Lemma} \ref{noncancel}, we have $|I_{3-3}|\geq c_{0}\varepsilon^{3}$.
All in all fix a $k$, we have
\begin{equation}
    I>2^{-k}\left(c_{0}\varepsilon^{3} - 2^{\tilde{\sigma}(2k-k_{H})}-2^{k_{L}-k}-2^{k-k_{H}}- \|f\ast_{1}\theta_{k_{H}}-f\ast_{1}\theta_{k_{L}}\|_{L^{2}}- \|f\ast_{2}\theta_{k_{H}}-f\ast_{2}\theta_{k_{L}}\|_{L^{2}}  \right)
\end{equation}
Now we put the condition $k_{1,L}= Mk_{1}= M^{2}k_{1,H}=M^{3}k_{2,L}=M^{4}k_{2}= M^{5}k_{2,H}= \cdots$. We can take $k_{1,L}\geq \operatorname{log}(\varepsilon^{-1})$ so large and a fix small $M$ independent of $\varepsilon$ so that
\begin{equation}
     2^{\tilde{\sigma}(2k-k_{H})}+2^{k_{L}-k}+2^{k-k_{H}}<10^{-10}c_{0}\varepsilon^{3}
\end{equation}
If for one of $k$, we have 
\begin{equation}
   \|f\ast_{1}\theta_{k_{H}}-f\ast_{1}\theta_{k_{L}}\|_{L^{2}}+\|f\ast_{2}\theta_{k_{H}}-f\ast_{2}\theta_{k_{L}}\|_{L^{2}}<\frac{c_{0}}{2}\varepsilon^{3}
\end{equation}
Suppose, we do it $L$ times and we still fail, this means that
\begin{equation}
    \begin{aligned}
        \frac{L}{2}c_{0}\varepsilon^{3}\leq & \sum_{i=1}^{L}\|f\ast_{1}\theta_{k_{i,H}}-f\ast_{1}\theta_{k_{iL}}\|_{L^{2}}+\|f\ast_{2}\theta_{k_{i,H}}-f\ast_{2}\theta_{k_{i,L}}\|_{L^{2}}\\
        \lesssim & L^{1/2}
        \br{
        \sum_{i=1}^L
            \|f\ast_{1}\theta_{k_{i,H}}-f\ast_{1}\theta_{k_{iL}}\|_{L^{2}}^2+\|f\ast_{2}\theta_{k_{i,H}}-f\ast_{2}\theta_{k_{i,L}}\|_{L^{2}}^2
        }^{1/2}
        \lesssim L^{1/2}
    \end{aligned}
\end{equation}
where Plancherel obtains the upper bound.
% \MH{The bad frequency localization of \(\theta\) will not cause too much of an issue due to the fast decay on the frequency side.}
Then $L\lesssim\varepsilon^{-6}$, this means that we find some $i<L$ such that
\begin{equation}
   \|f\ast_{1}\theta_{k_{i,H}}-f\ast_{1}\theta_{k_{i,L}}\|_{L^{2}}+\|f\ast_{2}\theta_{k_{i,H}}-f\ast_{2}\theta_{k_{i,L}}\|_{L^{2}}<\frac{c_{0}}{2}\varepsilon^{3}
\end{equation}
This finish our proof.

\end{proof}

\newpage

\appendix
%\section{Non-stationary Phase Method: Quadratic Phase Function}
\section{Errata}

\noindent\textit{p.10} - To figure out the range of integration of $s$, for a fixed $n \in \Z$ and $l>0$, we have
\begin{align*}
    l2^{-\kp} \leq &t \leq (l+1) 2^{-\kp}\\
    l^2 2^{-2\kp} \leq & t^2 \leq l^22^{-2\kp} +l 2^{-2\kp+1} +2^{-2\kp}\\
    \implies 2^{-\kp} n - (l^22^{-2\kp} + l 2^{-2\kp+1} +2^{-2\kp}) \leq &s \leq 2^{-\kp} (n+1) - l^2 2^{-2\kp}
\end{align*}
As such, the range of integration of $s$ is given by 
\begin{align*}
    [2^{-\kp} n - l^22^{-2\kp} - l 2^{-2\kp+1} - 2^{-2\kp}, 2^{-\kp} (n+1) - l^2 2^{-2\kp}]
\end{align*}
which is contained in
\begin{align*}
    [2^{-\kp} n -l^22^{-2k +3} , 2^{-\kp}n - l^2 2^{-2\kp} +2^{-\kp } ] 
\end{align*}
which in turn is contained in\footnote{Note that $|l^2 2^{-2\kp} - l^2 2^{-2\kp +3}| \leq 2^{3}$. }
\begin{align*}
    J_l:= [2^{-\kp} n -l^22^{-2k +3} , 2^{-\kp}n - l^2 2^{-2\kp+3} +2^{-\kp +3} ] = [0, 2^{-\kp+3}] + \sg_{l, n} 2^{-\kp+3}
\end{align*}
where $\sg_{l,n} = 2^{-3}n - l^2 2^{-\kp}$. 

\newpage

%\section{Chat}
%\input{chat}

\printbibliography[title={References}]

\vspace{.3in}

\Addresses

\end{document}